
%
\documentclass[reqno]{amsart}
%
%
\usepackage{amsthm}
\usepackage{amsmath}%
\usepackage{amsfonts}%
\usepackage{amssymb}%
\usepackage[foot]{amsaddr}
\usepackage{graphicx}
\usepackage{mathabx}
\usepackage{enumitem}
\usepackage{bbm}
\usepackage{lipsum}
\usepackage{tikz}
\usepackage{epsfig}
\usepackage{comment}

\usepackage{hyperref}

%
\newtheorem{theorem}{Theorem}
\theoremstyle{plain}

\newtheorem{definition}{Definition}
\newtheorem{example}{Example}

\newtheorem{lemma}{Lemma}

\newtheorem{proposition}{Proposition}

\numberwithin{equation}{section}

\newcommand{\sgn}{\operatorname{sgn}}

\renewcommand{\d}[1]{\mathinner{\mathrm{d}{#1}}}

\usepackage{geometry,mathtools}
\setcounter{MaxMatrixCols}{20}

\begin{document}
\title{Entropy Dissipation at the Junction for Macroscopic Traffic Flow models}
\author{Yannick Holle$^\dagger$}
\address{$^\dagger$Institut f\"ur Mathematik \\ RWTH Aachen University \\ Templergraben 55, 52062 Aachen, Germany}

\email[Y.~Holle]{holle@eddy.rwth-aachen.de}%
\date{May 31, 2021}

\begin{abstract}
	A maximum entropy dissipation problem at a traffic junction and the corresponding coupling condition are studied. We prove that this problem is equivalent to a coupling condition introduced by Holden and Risebro. An $L^1$--contraction property of the coupling condition and uniqueness of solutions to the Cauchy problem are proven. Existence is obtained by a kinetic approximation of Bhatnagar--Gross--Krook--type together with a kinetic coupling condition obtained by a kinetic maximum entropy dissipation problem. The arguments do not require $TV$--bounds on the initial data compared to previous results.
	We also discuss the role of the entropies involved in the macroscopic coupling condition at the traffic junction by studying an example.
\end{abstract}

\thanks{\textbf{Funding:} This work has been funded by the Deutsche Forschungsgemeinschaft (DFG, German Research Foundation) projects 320021702/GRK2326 Energy, Entropy, and Dissipative Dynamics (EDDy) and HE5386/22-1.}


\subjclass[2010]{76A30, 35L65} %
\keywords{vehicular traffic flow, maximum entropy dissipation, LWR traffic model, scalar conservation laws, kinetic model, traffic networks}%

\maketitle

\section{Introduction}

By hyperbolic conservation laws on networks we understand models on finite, directed graphs where the dynamics on each arc are modelled by hyperbolic conservation laws in one spatial dimension. The arcs correspond to the traffic roads supplemented with suitable hyperbolic conservation laws for vehicular traffic flow. At the vertices (called junctions) suitable conditions are needed to couple the solutions.
Hyperbolic conservation laws on networks were intensively studied in various directions (analysis, numerics, control,...) in the last two decades, see e.g. \cite{BCGHP2014}. See also \cite{Br2000,Da2016} for an introduction to hyperbolic conservation laws. 

We are interested in network models for macroscopic vehicular traffic flow. The scalar Lighthill-Whitham-Richards model (LWR model) \cite{LiWh1955,Ri1956} describes the dynamics of the vehicles on the roads. The LWR model on networks and different coupling conditions were studied by many authors, see e.g. \cite{ACD2017,BrNo2017,CGP2005,CoGa2010,DGP2018,FMR2021,HoRi1995,Le1993}. For a general introduction to traffic flow on networks, see \cite{GHP2016,GaPi2006}. 
A major problem is the selection of suitable coupling conditions at the junctions. The aim is to select coupling conditions modelling the physical dynamics at the traffic junction. 

In the seminal paper by Holden and Risebro \cite{HoRi1995} a coupling condition for the LWR model is defined by an optimization problem. Existence and uniqueness of solutions to the Riemann problem is obtained. For initial data with bounded total variation existence of solutions to the Cauchy problem is proven. We study this condition by using a maximum entropy dissipation problem which is equivalent to the optimization problem introduced by Holden and Risebro. We prove existence and uniqueness of solutions to the Cauchy problem without using a bound on the total variation of the initial data. 
Coclite and Garavello \cite{CoGa2010} introduce a vanishing viscosity approach for the LWR model on networks and Andreianov, Coclite and Donadello \cite{ACD2017} prove that the limit of the vanishing viscosity solutions satisfies an $L^1$--contractive coupling condition at the junction. Recently, Fjordholm, Musch and Risebro \cite{FMR2021} studied analytical and numerical aspects of $L^1$--contractive coupling conditions by using stationary solutions. In this paper we will use again the $L^1$--contraction property and stationary solutions to study a kinetic approximation for the LWR model.

We use a kinetic Bhatnagar--Gross--Krook--type (BGK) model and a kinetic coupling condition to approximate the LWR model. Kinetic models describing the dynamics at the junction were studied for different applications, e.g. traffic flow, chemotaxis and gas dynamics \cite{BKKP2016,BoKl2018,BoKl2018Scalar,BKKM2014,HeMo2009,HHW2020}. A typical approach starts by the selection of a kinetic coupling condition which is defined based on reasonable physical assumptions. In the next step the convergence towards a macroscopic solution is studied \cite{BKKM2014,HeMo2009}. In \cite{HHW2020} a BGK model for isentropic gas dynamics is coupled at the junction by a kinetic coupling condition. The convergence of the kinetic solutions is justified by compensated compactness and a macroscopic coupling condition is derived. We follow \cite{HHW2020} but since the LWR model is a scalar conservation law, we obtain stronger analytical results.

The LWR model is a scalar conservation law 
\begin{equation*}
	\partial_t\rho+\partial_x f(\rho)=0 \qquad  t>0,x\in \Omega,
\end{equation*}  
where $\rho=\rho(t,x)$ denotes the car density and $f$ the flux function. An assumption on the flux function typically used in the literature is that $f\colon [a,b]\to [0,\infty)$ is a sufficiently regular, concave function with $f(a)=f(b)=0$.  

We use the framework in \cite{GHP2016} and consider a single junction with $i=1,\dots,n$ incoming roads and $j=n+1,\dots,n+m$ outgoing roads. The roads are assumed to be infinitely long and the spatial variable $x$ is in the domains $\Omega_i=(-\infty,0)$ for $i=1,\dots,n$ and $\Omega_j=(0,\infty)$ for $j=n+1,\dots,n+m$. Throughout this paper we consider weak entropy solutions to 
\begin{equation}\label{eq:LWRmodel}
	\partial_t\rho_h+\partial_xf_h(\rho_h)=0 \qquad t> 0,x\in\Omega_h,h=1,\dots,n+m,
\end{equation} 
since it is well-known that the conservation law \eqref{eq:LWRmodel} does not admit classical solutions in general. 

We make the following assumptions on $f_h$:
\begin{enumerate}[label={\bf{(f.\arabic*)}}]
	\item \label{eq:Assumptionsfh1}$f_h\colon K_h\to [0,\infty)$ is Lipschitz continuous with $K_h=[a_h,b_h]$;
	\item \label{eq:Assumptionsfh2}$f_h$ is concave with $f_h(a_h)=f_h(b_h)=0$;
	\item \label{eq:Assumptionsfh3}
		\begin{equation*}
			\sum_{i=1}^{n}f_i(0)-\sum_{j=n+1}^{n+m}f_j(0)=0;
		\end{equation*}
	\item \label{eq:Assumptionsfh4} $f_h'\colon K_h\to \mathbb R$ is not constant on any non-trivial subinterval of $K_h$. 
\end{enumerate}
Notice that the non-degeneracy condition \ref{eq:Assumptionsfh4} is only used to pass to the limit in the kinetic approximation. For notational convenience we extend $f_h$ by $f_h(\rho)=0$ for $\rho\in\mathbb R\backslash K_h$.

We impose the initial data $\rho_{0,h}\in L^1(\Omega_h,K_h)$ by  
\begin{equation}
\rho_h(0,x):=\lim_{t\to 0+}\rho_h(t,x)=\rho_{0,h}(x)\qquad\text{for a.e. }x\in\Omega_h,
\end{equation}
and assume that the above limit is attained in the strong $L^1_\mathrm{loc}$--sense.

It remains to define a coupling condition at the junction.
First, we study the initial value problem with constant initial data on each road, called generalized Riemann problem \cite{GHP2016}. We assume that solutions to the generalized Riemann problem coincide with solutions to standard Riemann problems restricted to $x\in \Omega_h$.

We recall the definition of solutions to the generalized Riemann problem introduced by Holden and Risebro \cite{HoRi1995}. 
\begin{definition}\label{def:HoldenRisebro}
	Fix constant initial data $\rho_{0,h}\in K_h$ and concave functions $\hat g_{h}\colon [0,1]\to \mathbb R$.
	A solution to the generalized Riemann problem in the sense of Holden and Risebro \cite{HoRi1995} is given by the restrictions of $(\rho_1,\dots,\rho_{n+m})$ to $x\in \Omega_h$, 
	where $\rho_h\colon [0,\infty)_t\times \mathbb R_x\to K_h$ are the weak entropy solutions to the standard Riemann problems with initial data
	\begin{equation*}
		\rho_h(0,x)=
		\begin{cases}
			\rho_{0,h}\quad &\text{if }x\in\Omega_h,\\
			\tilde\rho_h\quad &\text{if }x\notin\Omega_h,
		\end{cases}
	\end{equation*}
	solving 
	\begin{alignat*}{3} 
		\underset{\tilde\rho_h\in K_h}{\max} & \quad & & \sum_{h=1}^{n+m} \hat g_{h}\!\left(\frac{f_h(\rho_h(t,0))}{f_h^{\max}}\right)\\
		\text{s.t.}&&& \sum_{i=1}^n f_i(\rho_i(t,0)) -\sum_{j=n+1}^{n+m} f_j(\rho_j(t,0))=0.\nonumber 
\end{alignat*}
with $f_h^{\max}:=\max \{f_h(\rho)\,|\,\rho\in K_h\}$.
\end{definition}

In the following we will illustrate in detail why the optimization problem in Definition~\ref{def:HoldenRisebro} can be interpreted as a maximum entropy dissipation problem at the junction. 
The concept of maximum entropy dissipation was used in different situations in kinetic and hyperbolic theory before. Let us mention the work by Dafermos~\cite{Da1973} who applied an entropy rate admissibility criterion to select solutions to hyperbolic conservation laws. A maximum entropy dissipation problem at the junction is used in \cite{HHW2020} to select a kinetic coupling condition which converges to a macroscopic coupling for the system of isentropic gas dynamics. 
 
We introduce a slight modification of Definition~\ref{def:HoldenRisebro} involving entropy fluxes $\hat G_h$ corresponding to entropies $\hat\eta_h$ satisfying one the following assumptions:

\begin{enumerate}[label={\bf{($\pmb \eta$.\Alph*)}}]
	\item\label{eq:Assumptionsetah1} $\hat\eta_h\colon \mathbb R\to \mathbb R$ is convex;
	\item\label{eq:Assumptionsetah2} $\hat\eta_h\colon \mathbb R\to \mathbb R$ is convex and $\hat\eta_h'\,f_h'\le 0$ almost everywhere on $K_h$.
\end{enumerate} 
Remark that \ref{eq:Assumptionsetah2} characterizes the convex functions $\hat \eta_h$ such that there exists $\rho\in K_h $ which is a minimal point of $\hat \eta_h$ and a maximal point of $f_h$. In other words this point $\rho$ is of minimal entropy and maximal flux. Later this additional condition will be needed to prove, under certain conditions, that the following definitions for the generalized Riemann problem are equivalent to Definition~\ref{def:HoldenRisebro}.


\begin{definition}\label{def:MacroMaximumEntropyProb}
	Fix constant initial data $\rho_{0,h}\in K_h$ and entropy pairs $(\hat \eta_h,\hat G_h)$ satisfying \ref{eq:Assumptionsetah2}.
	A solution to the Riemann problem in the sense of the maximum entropy dissipation problem is given by the restrictions of  $(\rho_1,\dots,\rho_{n+m})$ to $x\in \Omega_h$,
	where $\rho_h\colon [0,\infty)_t\times \mathbb R_x\to K_h$ are the weak entropy solutions to the standard Riemann problems with initial data
	\begin{equation*}
		\rho_h(0,x)=
		\begin{cases}
			\rho_{0,h}\quad &\text{if }x\in\Omega_h,\\
			\tilde\rho_h\quad &\text{if }x\notin\Omega_h,
		\end{cases}
	\end{equation*}
solving
	\begin{alignat*}{3} 
		\underset{\tilde\rho_h\in K_h}{\max} & \quad & & \sum_{i=1}^n \hat G_{i}(\rho_i(t,0+))-\sum_{j=n+1}^{n+m} \hat G_{j}(\rho_j(t,0-))\\
		\text{s.t.}&&& \sum_{i=1}^n f_i(\rho_i(t,0)) -\sum_{j=n+1}^{n+m} f_j(\rho_j(t,0))=0.\nonumber \end{alignat*}
\end{definition}
In Section 2 we will prove that the optimization problem in Definition~\ref{def:MacroMaximumEntropyProb} admits a unique solution if $\hat \eta_h$ are strictly convex and the equivalence between Definition~\ref{def:HoldenRisebro} and Definition~\ref{def:MacroMaximumEntropyProb} for suitable $\hat g_h$ and $\hat G_h$.

We give a motivation for Definition~\ref{def:MacroMaximumEntropyProb}:
Integration by parts applied to the entropy formulation of \eqref{eq:LWRmodel} gives the quantity
\begin{equation}\label{eq:EntropyDecayJunctionMacro2}
	\sum_{i=1}^{n}\hat G_{i}(\rho_i(t,0-))- \sum_{j=n+1}^{n+m}\hat G_{j}(\rho_{j}(t,0+))
\end{equation}
in the boundary integral at the junction. More precisely, \eqref{eq:EntropyDecayJunctionMacro2} measures the entropy dissipation at the junction. It is different to the quantity 
\begin{equation}\label{eq:HelpDerivationRiemanSolver}
	\sum_{i=1}^{n}\hat G_{i}(\rho_i(t,0+))- \sum_{j=n+1}^{n+m}\hat G_{j}(\rho_{j}(t,0-))
\end{equation}
involved in the optimization problem in Definition~\ref{def:MacroMaximumEntropyProb}. More precisely, the directions from which the traces are taken are exchanged. The traces in \eqref{eq:EntropyDecayJunctionMacro2} are the traces which are still visible in the generalized Riemann problem after the functions are restricted to $x\in\Omega_h$. The traces in \eqref{eq:HelpDerivationRiemanSolver} will disappear after the restriction.
Both quantities coincide if and only if there is no stationary shock at $x=0$ in one of the standard Riemann problems. In Definition~\ref{def:MacroMaximumEntropyProb} we use the quantity \eqref{eq:HelpDerivationRiemanSolver} and make the assumption that the entropy dissipation due to this stationary shocks cannot be traced back to the dynamics in the junction.

To allow also entropies $\hat \eta_h$ which are not strictly convex, we introduce the priority functions $\Pi^M$. 
For $M\in\mathbb R$, we define $\Pi^M=(\Pi^M_1,\dots,\Pi^M_{n+m})$ such that 
\begin{enumerate}[label={\bf{($\pmb {\Pi}$.\arabic*)}}]
	\item\label{eq:DefPiM} 
	the functions $\Pi^M_h\colon [0,1]\to \{\rho\in K_h\,|\,\hat\eta_h'(\rho-)\le M \le \hat\eta_h'(\rho+)\};\; z\mapsto \Pi^M_h(z)$ are increasing and surjective for $h=1,\dots,n+m$ and $M\in\mathbb R$.
\end{enumerate} 

We give the following definition for the generalized Riemann problem. 

\begin{definition}\label{def:RiemannSolver}
	Fix constant initial data $\rho_{0,h}\in K_h$, entropy pairs $(\hat \eta_h,\hat G_h)$ and $\Pi^M$ satisfying \ref{eq:Assumptionsetah1} and \ref{eq:DefPiM}. A solution to the generalized Riemann problem is given by the restrictions of $\rho_h(t,x)$ to $x\in\Omega_h$, where $\rho_h$ are the solutions to the standard Riemann problem with initial data
	\begin{equation*}
		\rho_h(0,x)=
		\begin{cases}
			\rho_{0,h}&\text{if }x\in\Omega_h,\\
			\hat\rho_h &\text{if }x\notin\Omega_h.
		\end{cases}
	\end{equation*}
	The states $\hat\rho_h\in K_h$ are chosen such that
	\begin{gather*}
		\hat\eta_h'(\hat\rho_h-)\le M\le \hat\eta_h'(\hat\rho_h+)\quad\text{and}\quad 
		\hat\rho_h=\Pi^M_h(z)\quad\text{for }M\in\mathbb R,z\in [0,1],\\
		\sum_{i=1}^n f_i(\rho_i(t,0)) -\sum_{j=n+1}^{n+m} f_j(\rho_j(t,0))=0.
	\end{gather*}
	We define the Riemann solver by $\mathcal {RP}(\rho_{0})=(\rho_1(t,0-),\dots,\rho_n(t,0-),\rho_{n+1}(t,0+),\dots,\rho_{n+m}(t,0+))$ as a map $\mathcal {RP}\colon \bigtimes_{h=1}^{n+m}K_h\to \bigtimes_{h=1}^{n+m}K_h$.
\end{definition}

Solutions to the generalized Cauchy problem are defined as follows.
\begin{definition}\label{def:CauchyProblemMacro}
	Let $\rho_{0,h}\in L^1(\Omega_h,K_h)$ and let $\mathcal{RP}$ be as in Definition~\ref{def:RiemannSolver}. We call $(\rho_1,\dots,\rho_{n+m})$ with $\rho_h\colon (0,\infty)_t\times\Omega_h \to  K_h$ a solution to the generalized Cauchy problem if
	\begin{itemize}
		\item for any entropy pair $(\eta_h,G_h)$ and for any test function $\phi\in\mathcal D(\mathbb R_t\times\Omega_h)$, $\phi\ge 0$ there holds
		\begin{equation*}
			\iint_{\mathbb R_t\times\Omega_h}\eta_h(\rho_h)\,\partial_t\phi+G_h(\rho_h)\,\partial_x\phi\d t\d x+\int_{\Omega_h}\eta_h(\rho_{0,h}(x))\,\phi(0,x)\d x\ge 0,
		\end{equation*}
		\item there exist $\rho_h(t,0)\in L^\infty((0,\infty)_t,K_h)$ such that 
			\begin{alignat*}{2}
				&G_h(\rho_h(t,0))=\lim_{\substack{x\to 0\\ x\in\Omega_h}}G_h(\rho_h(t,x))&\qquad &\text{for a.e. }t>0,
			\end{alignat*}
		for all entropy fluxes $G_h$ and
			\begin{alignat*}{2}
				&\mathcal{RP}(\rho(t,0))=\rho(t,0)&\qquad &\text{for a.e. }t>0.
			\end{alignat*}
	\end{itemize}
\end{definition}

To prove existence of solutions in the sense of Definition~\ref{def:CauchyProblemMacro}, we use an approximation by a BGK model \cite{PeTa1991} 
\begin{equation*}
	\partial_t g_h(t,x,\xi)+f'_h(\xi)\,\partial_x g_h(t,x,\xi)=\frac{\chi(\rho_{g_h}(t,x),\xi)-g_h(t,x,\xi)}{\epsilon}\qquad t>0,x\in \Omega_h,\xi\in K_h,
\end{equation*}
with $g_h=g_h(t,x,\xi)$, $\rho_{g_h}(t,x)=\int_{ K_h}g_h(t,x,\xi)\d{\xi}$ and 
	$\chi(\rho,\xi)=\sgn(\xi)$ if $\xi\cdot(\rho-\xi)> 0$, $\chi(\rho,\xi)=0$ else.
At the junction a kinetic coupling condition $\Psi$ defined by a kinetic maximum entropy dissipation problem is imposed. Then, we proceed as follows: The convergence of the kinetic solutions in the interior of $\Omega_h$ is justified by a compactness result by Panov \cite{Pa1995}. The existence of stationary solutions to the BGK model and a careful analysis of the traces at the junction is needed to pass to the limit in the coupling condition. 
Uniqueness of the solutions to the Cauchy problem is obtained by an $L^1$--contraction property of the coupling condition in combination with classical Kru\v{z}kov--estimates. This approach is similar to the approach in \cite{ACD2017} for a vanishing viscosity approximation. Our main result is the following.

\begin{theorem}\label{thm:MainResult}
	Let $\rho_{0,h}\in L^1(\Omega_h,K_h)$, \ref{eq:Assumptionsfh1}--\ref{eq:Assumptionsfh4}, \ref{eq:Assumptionsetah1} and \ref{eq:DefPiM} hold. Let $g_h^\epsilon$ be the solution to the BGK model with coupling condition $\Psi$, initial condition $g_h^\epsilon(0,x,\xi)=\chi(\rho_{0,h}(x),\xi)$ and $\epsilon>0$ obtained in Theorem~\ref{thm:ExistenceBGKKineticModel}. Then, there exist $\rho_h\in C([0,\infty)_t,L^1(\Omega_h,K_h))$ such that, after possibly taking a subsequence, $\int_{\mathbb R}g_h^\epsilon\d\xi\to \rho_h$ as $\epsilon\to 0$ for a.e.~$(t,x)\in (0,\infty)\times\Omega_h$. $(\rho_1,\dots,\rho_{n+m})$ is the unique solution to the generalized Cauchy problem in the sense of Definition~\ref{def:CauchyProblemMacro}.
\end{theorem}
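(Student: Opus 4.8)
The plan is to combine three ingredients that have been set up in the statement and the earlier parts of the paper: (i) interior compactness for the BGK approximation, (ii) compactness and identification of the traces at the junction, and (iii) an $L^1$--contraction property yielding uniqueness. I would first invoke the existence result for the BGK model with kinetic coupling $\Psi$ (Theorem~\ref{thm:ExistenceBGKKineticModel}) to obtain, for each $\epsilon>0$, a solution $g_h^\epsilon$ with the prescribed initial data, together with the uniform bounds that come with it: $0\le \pm g_h^\epsilon\le 1$ with the correct sign, support in $\xi\in K_h$, and the $L^1$--bound $\int |\rho_{g_h^\epsilon}| \le \int |\rho_{0,h}|$. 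Standard BGK estimates (as in \cite{PeTa1991}) give that $\rho_{g_h^\epsilon} = \int g_h^\epsilon\,d\xi$ is bounded in $L^\infty((0,\infty);L^1\cap L^\infty(\Omega_h))$ and that $g_h^\epsilon - \chi(\rho_{g_h^\epsilon},\xi)\to 0$ in $L^1_{\mathrm{loc}}$, so that in the interior any limit of $\rho_{g_h^\epsilon}$ is an entropy solution of \eqref{eq:LWRmodel}.

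Next I would establish interior compactness. Using the non-degeneracy assumption \ref{eq:Assumptionsfh4} (the reason it is imposed) one applies the averaging/compactness result of Panov \cite{Pa1995} to deduce that $\{\rho_{g_h^\epsilon}\}$ is precompact in $L^1_{\mathrm{loc}}((0,\infty)\times\Omega_h)$; passing to a subsequence we get $\rho_{g_h^\epsilon}\to\rho_h$ a.e.\ and in $L^1_{\mathrm{loc}}$, and $g_h^\epsilon\to\chi(\rho_h,\cdot)$. Passing to the limit in the kinetic entropy inequalities for the BGK model yields the macroscopic entropy inequalities, i.e.\ the first bullet of Definition~\ref{def:CauchyProblemMacro}, and the usual argument gives $\rho_h\in C([0,\infty);L^1(\Omega_h,K_h))$ with the prescribed initial datum attained in $L^1_{\mathrm{loc}}$. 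Since $\rho_h$ is an entropy solution on each half-line with $L^1$ trace, the strong traces $\rho_h(t,0)$ exist (again via \ref{eq:Assumptionsfh4}, by the trace theory for non-degenerate fluxes), so the first part of the second bullet holds.

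The heart of the proof — and the main obstacle — is showing that the limiting traces satisfy the macroscopic coupling condition $\mathcal{RP}(\rho(t,0))=\rho(t,0)$. Here I would follow the strategy of \cite{HHW2020}: one needs to pass to the limit in the kinetic coupling $\Psi$ and identify the limit with $\mathcal{RP}$. The delicate points are that the kinetic traces of $g_h^\epsilon$ at $x=0$ need not converge pointwise in $\xi$, only weakly, and that the kinetic maximum entropy dissipation problem defining $\Psi$ must be shown to relax, in the limit $\epsilon\to0$, to the macroscopic maximum entropy dissipation problem, which by the equivalence results of Section~2 (Definition~\ref{def:MacroMaximumEntropyProb} $\Leftrightarrow$ Definition~\ref{def:HoldenRisebro} $\Leftrightarrow$ Definition~\ref{def:RiemannSolver}) coincides with $\mathcal{RP}$. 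The key technical device is the use of stationary solutions of the BGK model at the junction: one compares $g_h^\epsilon$ with kinetic stationary profiles corresponding to the states $\mathcal{RP}(\rho(t,0))$, using the conservation of flux constraint (built into $\Psi$) to control the total mass flux through $x=0$, and uses weak lower semicontinuity of the (convex) kinetic entropy together with the flux-maximality structure encoded in \ref{eq:Assumptionsetah1}--\ref{eq:DefPiM} to show the limit traces are admissible and maximizing. This is exactly where the scalar structure gives more than in \cite{HHW2020}: the Kru\v zkov entropies and the monotonicity of the Riemann solver make the trace identification cleaner.

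Finally, uniqueness. Given two solutions in the sense of Definition~\ref{def:CauchyProblemMacro} with the same initial data, one applies the classical Kru\v zkov doubling-of-variables argument on each half-line $\Omega_h$, which produces, after summing over $h$, a boundary term at the junction of the form $\sum_i q_i(\rho_i(t,0),\sigma_i(t,0)) - \sum_j q_j(\rho_j(t,0),\sigma_j(t,0))$ in terms of the Kru\v zkov entropy fluxes $q_h$ of the two solutions; the $L^1$--contraction property of the coupling condition $\mathcal{RP}$ (established earlier in the paper, in the spirit of \cite{ACD2017,FMR2021}) asserts precisely that this boundary term has the right sign, so that $\frac{d}{dt}\sum_h\|\rho_h(t,\cdot)-\sigma_h(t,\cdot)\|_{L^1(\Omega_h)}\le 0$, hence the two solutions coincide. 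Since every subsequential limit is the unique solution, the whole family $\int g_h^\epsilon\,d\xi$ converges, completing the proof.
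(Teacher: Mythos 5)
Your overall skeleton (Panov compactness in the interior, stationary kinetic solutions at the junction, Kru\v{z}kov doubling plus the junction contraction property for uniqueness) is the same as the paper's, and the interior and uniqueness parts are sound. The gap is in the central step, the identification of the coupling condition for the limit traces, where your plan as written would not go through. First, you propose to compare $g_h^\epsilon$ with ``kinetic stationary profiles corresponding to the states $\mathcal{RP}(\rho(t,0))$''; these states are time dependent, so the associated profiles are not stationary solutions of the BGK model and the comparison argument (which rests precisely on stationarity, so that the $\partial_t$ term drops and only the junction term, controlled by the kinetic $L^1$--contraction of $\Psi$, survives) breaks down. The comparison has to be made with the stationary solutions of Proposition~\ref{prop:StationaryKineticSolution}, which exist exactly for the \emph{constant} states $\bar\rho$ in the fixed-point set $\mathcal K$ of \eqref{eq:defmathcalK}. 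Second, you propose to identify the limit by showing that the kinetic maximum entropy dissipation problem relaxes to the macroscopic one (Definition~\ref{def:MacroMaximumEntropyProb}) and then invoking the Section~2 equivalences; but those equivalences require the stronger assumption \ref{eq:Assumptionsetah2}, whereas Theorem~\ref{thm:MainResult} only assumes \ref{eq:Assumptionsetah1}, so this route is not available under the hypotheses of the theorem. The coupling condition in Definition~\ref{def:CauchyProblemMacro} is formulated through the Riemann solver $\mathcal{RP}$ of Definition~\ref{def:RiemannSolver}, and it is against this object that the limit must be identified.

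What is missing, concretely, is the two-step argument the paper uses: (i) passing to the limit in the comparison with the stationary solutions yields, for a.e.\ $t$, the family of Kru\v{z}kov-flux inequalities
$\sum_{i} G_i(\rho_i(t,0),\bar\rho_i)-\sum_{j} G_j(\rho_j(t,0),\bar\rho_j)\ge 0$ for every $\bar\rho\in\mathcal K$ (with a density argument to get a single exceptional null set in $t$); and (ii) a purely macroscopic lemma (Proposition~\ref{prop:EntropyCondToRiemannSolver}) showing that these inequalities, together with the structure of standard Riemann problems, force $\mathcal{RP}(\rho(t,0))=\rho(t,0)$ and in fact the dissipativity inequality against all fixed points of $\mathcal{RP}$, which is exactly what the uniqueness step needs. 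Your appeal to ``weak lower semicontinuity of the kinetic entropy together with the flux-maximality structure'' does not substitute for this: the kinetic traces only converge weakly, and without the stationary-solution comparison and step (ii) there is no mechanism in your plan that converts the weak limit information into the pointwise fixed-point condition $\mathcal{RP}(\rho(t,0))=\rho(t,0)$. (A minor further point: the existence of strong boundary traces of $\rho_h$ follows already from the trace theory for Lipschitz fluxes as in Theorem~\ref{thm:HelpExistenceTrace}; the non-degeneracy \ref{eq:Assumptionsfh4} is needed for the interior compactness, not for the traces.)
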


We give some remarks on the assumptions we made. \ref{eq:Assumptionsfh4} is only needed to pass to the limit in the kinetic approximation and \ref{eq:Assumptionsetah2} is needed for Definition~\ref{def:MacroMaximumEntropyProb}. Definition~\ref{def:RiemannSolver} can be studied without using the stronger assumption \ref{eq:Assumptionsetah2}.
\ref{eq:DefPiM} is only needed for entropies $\hat\eta_h$ which are not strictly convex. More precisely, the coupling condition in Definition~\ref{def:RiemannSolver} depends on the choice of $\Pi^M$ only if $\mathcal L(\{x\in \Gamma_h^\mathrm{out}\,|\,\hat\eta_h'(x-)\le M\le \hat\eta'_h(x+)\})\neq 0$ for two or more roads $h=1,\dots,n+m$.

The coupling conditions depend strongly on the choice of the entropies $(\hat\eta_h,\hat G_h)$. Therefore, they have to be chosen carefully depending on the desired modelling.   
We give two examples which we will study in more detail in Section 6:
\begin{enumerate}[label=(\roman*)]
	\item The choice
	\begin{equation}\label{eq:SameEntropyHatEta}
		\hat\eta_h=\hat\eta_1\qquad\text{for }h=1,\dots,n+m,
	\end{equation}
	leads to the coupling condition in \cite{ACD2017,CoGa2010} obtained by a vanishing viscosity approach.
	\item The choice
	\begin{equation}\label{eq:FluxEntropyHatEta}
		\hat\eta_h=-f_h \qquad\text{for }h=1,\dots,n+m,
	\end{equation}
	leads to a coupling condition which prioritizes the outgoing characteristics at the junction with large velocity $f_h'$. The coupling condition can be interpreted such that the drivers maximize their velocity at the junction.
\end{enumerate}

\subsection{Outline of the paper} The paper is organized as follows. In Section 2 we study Definitions~\ref{def:HoldenRisebro}--\ref{def:RiemannSolver} in detail and prove relations between them. In Section 3 we introduce the BGK model and the kinetic coupling condition $\Psi$. In Section 4 we prove well-posedness of the kinetic model. In Section~5 we rigorously justify the limit of the kinetic model and obtain existence of solutions to the LWR model. Furthermore, we prove uniqueness and an $L^1$--contraction property of the solutions. In Section 6 we study an explicit example for the Riemann problem and different choices of $\hat\eta_h$. In Section 7 we give an outlook for future research.

\section{The LWR model on Networks}
\subsection{Introductory remarks}
\subsubsection*{Entropy solutions}

Entropy pairs for the LWR model are given by 
\begin{equation}\label{eq:DefEntropyPairs}
	(\eta_h,G_{h})\colon\mathbb R\to \mathbb R^2\;\text{ such that }\;\eta_h\;\text{ is convex and }\; G'_{h}=\eta'_h\, f_h'.
\end{equation}
Every convex function $\eta_h\colon\mathbb R\to \mathbb R$ is an entropy with entropy flux
\begin{equation}
	G_{h}(\rho)=\int_{0}^\rho \eta'_h(y)\, f'_h(y)\d{y}+G_{h}(0)\qquad \text{for } \rho\in \mathbb R.
\end{equation}
Throughout this paper we consider weak entropy solutions to \eqref{eq:LWRmodel} in the sense that 
\begin{equation}\label{eq:WeakEntropySolution}
	\partial_t \eta_h(\rho_h)+\partial_x G_{h}(\rho_h)\le 0\qquad t>0,x\in\Omega_h,
\end{equation}
holds in the distributional sense for all entropy pairs $(\eta_h,G_{h})$. 
Let us recall the definition of some special entropies:
\begin{itemize}
	\item The relative entropy and relative entropy flux of $\rho\in \mathbb R$ w.r.t. $k\in \mathbb R$ and $(\eta_h,G_{h})$ are given by
	\begin{align}
	\begin{split}
		\eta_h(\rho \,|\, k)&=\eta_h(\rho)-\eta_h(k)-\eta'_h(k)\left(\rho-k\right),\\
		G_{h}(\rho\, | \, k)&=G_{h}(\rho)-G_{h}(k)-\eta'_h(k)\left(f_h(\rho)-f_h(k)\right).
	\end{split}
	\end{align}
	\item The Kru\v{z}kov entropy pair of $\rho\in\mathbb R$ w.r.t.~$k\in\mathbb R$ is given by
	\begin{equation}\label{eq:DefKruzkovEntropyPair}
		\eta_h(\rho,k)=|\rho-k|,\quad G_h(\rho,k)=\sgn(\rho-k)\left(f_h(\rho)-f_h(k)\right).
	\end{equation}
\end{itemize}

\subsubsection*{Some Notations}
We define the sets of states $\rho\in K_h$ with incoming characteristics at the junction 
\begin{align}\label{eq:HelpGammaIn}
	\Gamma_h^\mathrm{in} =
	\begin{cases}
		\operatorname{Int}\,\{\rho\in K_h\,|\, f'_h(\rho)\le 0\} & h=1,\dots,n,\\
		\operatorname{Int}\,\{\rho\in K_h\,|\, f'_h(\rho)\ge  0\} & h=n+1,\dots,n+m,
	\end{cases}
\end{align}
and with outgoing characteristics 
\begin{align}\label{eq:HelpGammaOut}
	\Gamma_h^\mathrm{out} =
	\begin{cases}
		\overline{\{\rho\in K_h\,|\, f'_h(\rho)> 0\}}& h=1,\dots,n,\\
		\overline{\{\rho\in K_h\,|\, f'_h(\rho) <0\}}& h=n+1,\dots,n+m,
	\end{cases}
\end{align}
where $\operatorname{Int}$ denotes the interior and the overline $\overline{\phantom{x}}$ the closure w.r.t.~the topological space $K_h$.

To handle the incoming and outgoing roads simultaneously and to simplify notation, we define
	\begin{equation}
		\pm_h=\begin{cases}
			 +\quad & h=1,\dots,n,\\
			 - \quad & h=n+1,\dots,n+m.
		\end{cases}	
	\end{equation}
	for $h=1,\dots,n+m$.
	This notation is used to handle different signs in the case of incoming/outgoing roads.

Furthermore, we consider only the representatives of $ \eta'_h,f'_h\in L^\infty(K_h)$ which are monotone in the classical sense. This allows to evaluate $\eta_h'$ and $f_h'$ point-wise. Our results hold true for any monotone representative of $\eta'_h,f_h'$. Furthermore, we will use the convention
\begin{equation}\label{eq:HelpConventrionDerHatEtaBoundKh}
	\eta'_h(a_h-)=-\infty\quad\text{and}\quad \eta'_h(b_h+)=\infty.
\end{equation}

\subsubsection*{Strong traces of entropy solutions}
Solutions to scalar conservation laws do not have strong traces in general. We state the following theorem which is obtained by combining the results from \cite{KwVa2007,Pa2006}. It gives sense to the initial trace $t=0$ and to the left-- and right--traces at the junction $x=0$. 
\begin{theorem}\label{thm:HelpExistenceTrace} 
	Let $\rho_h\in L^\infty((0,\infty)_t \times \mathbb R_x,K_h)$ be a weak entropy solution to the LWR model with flux $f_h\in C^{0,1}(K_h)$ for some $h=1,\dots,n+m$. Then, there exists
	\begin{enumerate}[label=(\roman*)]
		\item the initial trace 
	\begin{equation*}
		\rho_h(0,x)\in L^\infty(\mathbb R_x,K_h)
	\end{equation*}
	such that for all entropies $\eta_h$:
	\begin{equation*}
		\eta_h(\rho(s,x))\to \eta_h(\rho_h(0,x)) \qquad\text{ in }L^1_\mathrm{loc}(\mathbb R_x)\text{ as }s\to 0 ,s>0 ;
	\end{equation*}
	\item the boundary trace
	\begin{equation*}
		L^\infty((0,\infty)_t,K_h)\ni \rho_h(t,0)=
		\begin{cases}
			\rho_h(t,0-)\quad &h=1,\dots,n,\\
			 \rho_h(t,0+)\quad &h=n+1,\dots,n+m,
		\end{cases}
	\end{equation*}
	at the junction such that for all entropy fluxes $G_h$:
	\begin{equation*}
		G_h(\rho(t,s))\to G_h(\rho_h(t,0)) \qquad \text{ in }L^1_\mathrm{loc}((0,\infty)_t)\text{ as }s\to 0 ,s\in \Omega_h;
	\end{equation*}
	\item the trace
	\begin{equation*}
		 L^\infty((0,\infty)_t,K_h)\ni\rho_h(t,0\pm_h)=
		\begin{cases}
			\rho_h(t,0+)\quad &h=1,\dots,n,\\
			 \rho_h(t,0-)\quad &h=n+1,\dots,n+m,
		\end{cases}
	\end{equation*} 
	at the junction such that for all entropy fluxes $G_h$:
	\begin{equation*}
		G_h(\rho(t,s))\to G_h(\rho_h(t,\pm_h 0)) \qquad \text{ in }L^1_\mathrm{loc}((0,\infty)_t)\text{ as }s\to 0 ,s\notin \Omega_h.
	\end{equation*}
	\end{enumerate}

\end{theorem}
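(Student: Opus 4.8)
The plan is to reduce everything to known trace results for scalar conservation laws and then localize near $x=0$ and $t=0$. The existence of the initial trace in part (i) is a direct application of the result of Panov \cite{Pa2006} (or the earlier work cited): a bounded weak entropy solution $\rho_h$ on $(0,\infty)_t\times\mathbb R_x$ admits a strong $L^1_\mathrm{loc}$ initial trace $\rho_h(0,\cdot)\in L^\infty(\mathbb R_x,K_h)$, and since $\eta_h$ is continuous on the compact set $K_h$, convergence of $\rho_h(s,\cdot)$ to $\rho_h(0,\cdot)$ in $L^1_\mathrm{loc}$ is equivalent to convergence of $\eta_h(\rho_h(s,\cdot))$ — actually I would state it via the entropy-process/measure-valued formulation: Panov's theorem gives that $\eta_h(\rho_h(s,\cdot))\to \eta_h(\rho_h(0,\cdot))$ in $L^1_\mathrm{loc}$ for every continuous (in particular every convex entropy) $\eta_h$, which is exactly claim (i).

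For parts (ii) and (iii), the idea is to view the half-line problem $x\in\Omega_h$ as a boundary-value problem for a scalar conservation law on a half-space, whose flux is Lipschitz (by \ref{eq:Assumptionsfh1}), so the Kwon--Vasseur strong trace theorem \cite{KwVa2007} applies: any bounded weak entropy solution on, say, $(0,\infty)_t\times(-\infty,0)_x$ has a strong boundary trace $\rho_h(t,0-)\in L^\infty((0,\infty)_t,K_h)$ in the sense that $G_h(\rho_h(t,s))\to G_h(\rho_h(t,0-))$ in $L^1_\mathrm{loc}((0,\infty)_t)$ as $s\to 0-$, for every entropy flux $G_h$ (equivalently, $\rho_h(t,s)\to\rho_h(t,0-)$ in $L^1_\mathrm{loc}$ since $K_h$ is compact and the $G_h$'s separate points once $f_h'$ is non-constant, though one does not even need that for the stated $L^1_\mathrm{loc}$ convergence of $G_h(\rho_h)$). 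This gives the trace taken from inside $\Omega_h$, which is part (ii) after matching the sign convention: for incoming roads $\Omega_i=(-\infty,0)$ so the relevant one-sided trace is at $0-$, and for outgoing roads $\Omega_j=(0,\infty)$ so it is at $0+$. Part (iii) is the symmetric statement applied to the solution restricted to the complementary half-line $x\notin\Omega_h$ (on which, by the hypothesis that $\rho_h$ is a weak entropy solution on all of $\mathbb R_x$, it is again a bounded weak entropy solution), yielding the trace from the ``other'' side, $\rho_h(t,\pm_h 0)$.

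Two technical points deserve care. First, the hypothesis in the theorem is that $\rho_h$ is a weak entropy solution on $(0,\infty)_t\times\mathbb R_x$, whereas the trace theorems are usually stated on a strip $(0,T)\times(0,1)$ or a fixed half-space; here one localizes in $t$ (the statements are about $L^1_\mathrm{loc}$ in $t$, so a cutoff suffices) and in $x$ away from the far field, which is harmless since $K_h$-valued solutions are automatically in $L^\infty$. Second, the flux $f_h$ has been extended by $0$ outside $K_h$, so it is only Lipschitz, not $C^1$; but the Kwon--Vasseur and Panov results require only $f_h\in C^{0,1}$ (Lipschitz), which is \ref{eq:Assumptionsfh1}, so no non-degeneracy is needed for existence of traces — the non-degeneracy \ref{eq:Assumptionsfh4} is only invoked later for compactness. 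The main obstacle, such as it is, is bookkeeping: correctly identifying which of $0-$ and $0+$ is the ``interior'' trace $\rho_h(t,0)$ and which is the ``exterior'' trace $\rho_h(t,\pm_h 0)$ for incoming versus outgoing roads, and verifying that the $L^1_\mathrm{loc}$ convergence statement for all $G_h$ is genuinely equivalent to (or follows from) the strong $L^1_\mathrm{loc}$ trace provided by \cite{KwVa2007}; I would handle this by noting $G_h$ is Lipschitz on the compact $K_h$, so $\|G_h(\rho_h(t,s))-G_h(\rho_h(t,0\pm))\|_{L^1_\mathrm{loc}}\le \mathrm{Lip}(G_h)\,\|\rho_h(t,s)-\rho_h(t,0\pm)\|_{L^1_\mathrm{loc}}\to 0$.
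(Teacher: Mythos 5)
Your proposal takes exactly the paper's route: the paper gives no independent proof of this theorem but obtains it by combining the strong initial-trace result of Panov \cite{Pa2006} with the boundary-trace theorem of Kwon and Vasseur \cite{KwVa2007}, which is precisely your reduction, including the localization and the bookkeeping of which one-sided trace is $\rho_h(t,0)$ versus $\rho_h(t,0\pm_h)$ for incoming and outgoing roads. One caution: for merely Lipschitz, possibly degenerate flux, \cite{KwVa2007} provides the boundary trace only through the entropy-flux quantities $G_h(\rho_h(t,s))$ and not a strong $L^1_{\mathrm{loc}}$ trace of $\rho_h$ itself (this is exactly why the theorem is phrased in terms of $G_h$ only), so your closing estimate via $\mathrm{Lip}(G_h)\,\|\rho_h(t,s)-\rho_h(t,0\pm)\|_{L^1_{\mathrm{loc}}}$ should be discarded in favour of the direct statement you already mention earlier in the proposal.
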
 
Later, the values $\rho_h(t,0)$ will satisfy the coupling condition in Definition~\ref{def:CauchyProblemMacro}. The traces $\rho_h(t,0\pm_h)$ appear in the construction using standard Riemann problems, e.g.\ in Definition~\ref{def:MacroMaximumEntropyProb}. The traces $\rho_h(t,0\pm_h)$ will dissapear, after restricting the solutions to $\Omega_h$.

The above traces are attained in the strong $L^1_{\mathrm{loc}}$--sense. This allows us to apply a divergence theorem using the theory of divergence measure fields \cite{An1983}.

Applying the divergence theorem to the weak formulation of the LWR model and using a sequence of test functions concentrating at $x=0$, we obtain the Rankine--Hugoniot--conditions 
\begin{equation*}
	G_h(\rho_h(t,0\pm_h))-G_h(\rho_h(t,0))
	\begin{cases}
		\ge 0 \quad &h=1,\dots,n,\\
		\le 0\quad &h=n+1,\dots,n+m,
	\end{cases}
\end{equation*}
for all entropy fluxes $G_h$. 
Therefore, $\rho_h(t,0)$ and $\rho_h(t,0\pm_h)$ coincide or are connected by a stationary shock. We conclude 
\begin{equation}
	f_h( \rho_h(t,0))=f_h( \rho_h(t,0+))=f_h( \rho_h(t,0-)).
\end{equation}

\subsection{The LWR model at the junction}

Having the physical application in mind, it is reasonable to assume that the total mass (the total number of cars) in the network is conserved in time
\begin{equation}\label{eq:MassConservationJunctionHelp}
	\sum_{h=1}^{n+m}\int_{\Omega_h}\rho_h(T,x)\d{x}=\sum_{h=1}^{n+m}\int_{\Omega_h}\rho_{0,h}(x)\d{x} \qquad \text{for every }T> 0,
\end{equation}
for $\rho_{0,h}\in L^1(\Omega_h,K_h)$.
Integration by parts applied to \eqref{eq:LWRmodel} together with \ref{eq:Assumptionsfh3} and \eqref{eq:MassConservationJunctionHelp} leads to the Rankine--Hugoniot--type condition
\begin{equation}\label{eq:MassConservationJunction}
	\sum_{i=1}^n f_i(\rho_i(t,0))=\sum_{j=n+1}^{n+m}f_j(\rho_j(t,0))\qquad \text{for a.e. }t>0.
\end{equation}
Notice that there is another common condition in the literature based on the priorities of the drivers at the junction. This condition implies \eqref{eq:MassConservationJunction}. More details and a short outlook how our techniques could be applied to this condition is given in Section~7.1. 

We continue with the entropy dissipation at the junction. For the LWR model on the real line, the total entropy decreases, i.e. 
\begin{equation*}
	\int_{\mathbb R}\eta(\rho(T_2,x))\d x\le \int_{\mathbb R} \eta(\rho(T_1,x))\d{x}\quad\text{for every }T_2\ge T_1\ge 0,
\end{equation*}
for every entropy $\eta$ with $\eta(0)=0$ and initial data $\rho_0\in L^1(\mathbb R,K_h)$. We aim to extend this property to networks. Therefore, we choose suitable entropies $(\eta_1,\dots,\eta_{n+m})$ with $\eta_h(0)=0$
and assume 
\begin{equation}\label{eq:TotalEntropyDecayNetworkMacro}
	\sum_{h=1}^{n+m}\int_{\Omega_h}\eta_h(\rho_h(T_2,x))\d x\le \sum_{h=1}^{n+m}\int_{\Omega_h}\eta_h(\rho_{h}(T_1,x))\d{x}\quad\text{for every }T_2\ge T_1\ge 0,
\end{equation}
and for $\rho_{0,h}\in L^1(\Omega_h,K_h)$. Analogous to \ref{eq:Assumptionsfh3} we make the assumption
\begin{equation}\label{eq:constraintEntropiesintheEnergyDecayOnTheNetwork}
	\sum_{i=1}^{n}G_i(0)-\sum_{j=n+1}^{n+m}G_j(0)=0.
\end{equation}
Integration by parts applied to \eqref{eq:WeakEntropySolution} together with \eqref{eq:TotalEntropyDecayNetworkMacro} and \eqref{eq:constraintEntropiesintheEnergyDecayOnTheNetwork} leads to 
\begin{equation}\label{eq:EntropyDecayJunctionMacro}
	\sum_{i=1}^{n}G_{i}(\rho_i(t,0))- \sum_{j=n+1}^{n+m}G_{j}(\rho_{j}(t,0))\ge 0\qquad\text{for a.e. }t> 0.
\end{equation}
We cannot expect that \eqref{eq:EntropyDecayJunctionMacro} holds for every choice of $\eta_h$. Otherwise, we get $\rho_h(t,0)=0$ (set $G_h(\rho)=f_h(\rho)-f_h(0)$ and $G_h(\rho)=f_h(0)-f_h(\rho)$ with $G_{h'}\equiv 0$ for $h'\neq h$). The most we can expect is that \eqref{eq:EntropyDecayJunctionMacro} holds for a suitable class of entropies $(\eta_1,\dots,\eta_{n+m})$.

\subsection{The Riemann solver}

We prove existence and uniqueness of solutions to the generalized Riemann problem in Definition~\ref{def:RiemannSolver} and an $L^1$-contraction property of the corresponding Riemann solver $\mathcal{RP}$.

\begin{proposition}\label{Prop:ExistenceRiemannProblem}
	Let $\rho_{0,h}\in K_h$ be constant and let \ref{eq:Assumptionsfh1}--\ref{eq:Assumptionsfh3}, \ref{eq:Assumptionsetah1} and \ref{eq:DefPiM} hold.
	There exists a unique solution to the generalized Riemann problem in the sense of Definition~\ref{def:RiemannSolver}. 
	The solution is stationary if and only if $\mathcal{RP}(\rho_0)=\rho_0$.
\end{proposition}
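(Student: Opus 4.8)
The plan is to reduce the generalized Riemann problem to a single scalar equation for the junction flux and then invoke a standard maximization-selection argument for the individual traces. First I would recall from the theory of scalar conservation laws the structure of solutions to the standard Riemann problem: for an incoming road $i$, the Riemann problem with left state $\rho_{0,i}$ and right state $\tilde\rho_i$ has a self-similar solution whose trace at $x=0^-$, call it $\rho_i(t,0-)$, is the unique state with outgoing (or zero-speed) characteristics such that $f_i(\rho_i(t,0-))$ equals the \emph{demand} $d_i(\rho_{0,i})$ when $\tilde\rho_i$ lies in $\Gamma_i^{\mathrm{out}}$; more precisely, the attainable flux values at $x=0^-$ as $\tilde\rho_i$ ranges over $\Gamma_i^{\mathrm{out}}$ form exactly the interval $[0,d_i(\rho_{0,i})]$, and similarly the attainable fluxes at $x=0^+$ on an outgoing road $j$ form $[0,s_j(\rho_{0,j})]$ (the \emph{supply}). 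This is classical (Lebacque/LeFloch) and follows from concavity \ref{eq:Assumptionsfh2} together with \ref{eq:Assumptionsfh1}. The key observation is that in Definition~\ref{def:RiemannSolver} the hatted state $\hat\rho_h$ is required to lie in $\{\rho\in K_h\,|\,\hat\eta_h'(\hat\rho_h-)\le M\le\hat\eta_h'(\hat\rho_h+)\}$, which by \ref{eq:DefPiM} is precisely the range of $\Pi^M_h$, and this set is contained in $\Gamma_h^{\mathrm{out}}$ for every $M$ because $\hat\eta_h$ is convex and, by the sign convention, the monotonicity of $\hat\eta_h'$ forces these states to have the correct characteristic direction to be admissible boundary data. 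Hence the problem is: find $M\in\mathbb R$ (equivalently a vector $(\hat\rho_h)_h$ parametrized monotonically by $M$ through $\Pi^M$) such that the resulting traces $\rho_h(t,0)$ satisfy the single conservation constraint $\sum_i f_i(\rho_i(t,0))=\sum_j f_j(\rho_j(t,0))$.

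The main step is a monotonicity/intermediate-value argument in the scalar parameter $M$. As $M$ increases from $-\infty$ to $+\infty$, each $\hat\rho_h=\Pi^M_h(z)$ increases (for fixed $z$), so on each incoming road the corresponding trace flux $f_i(\rho_i(t,0))$ — which is the minimum of the demand $d_i(\rho_{0,i})$ and the flux $f_i(\hat\rho_i)$ of the imposed right state (a state with nonpositive characteristic speed) — is a nonincreasing function of $M$, while on each outgoing road the trace flux $f_j(\rho_j(t,0))$ is a nondecreasing function of $M$. Therefore the \emph{excess function}
\begin{equation*}
	\Phi(M):=\sum_{i=1}^n f_i(\rho_i(t,0))-\sum_{j=n+1}^{n+m}f_j(\rho_j(t,0))
\end{equation*}
is nonincreasing in $M$, and one checks the boundary behavior: for $M$ very negative all $\hat\rho_h$ sit at the left endpoint of the admissible set, forcing maximal incoming flux (the full demands) and zero outgoing flux, so $\Phi(M)\ge 0$; for $M$ very large the situation is reversed and $\Phi(M)\le 0$. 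By continuity of $M\mapsto\Phi(M)$ (continuity of $f_h$, of the demand/supply functions, and right/left continuity of $\Pi^M_h$, handled by taking the appropriate one-sided limit) there exists $M^*$ with $\Phi(M^*)=0$; this yields existence. For uniqueness of the \emph{Riemann solver output} $\mathcal{RP}(\rho_0)=(\rho_h(t,0))_h$ — note the $\hat\rho_h$ themselves need not be unique when $\hat\eta_h$ is not strictly convex, which is exactly why the statement is phrased in terms of $\mathcal{RP}$ — I would argue that even if several values of $M$ (or several choices of $z$) give $\Phi=0$, the resulting traces $\rho_h(t,0)$ are the same: on the incoming roads the flux is pinned and the trace is determined by that flux together with membership in $\Gamma_i^{\mathrm{out}}$, and likewise on the outgoing roads, so any two solutions agree. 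Here the non-degeneracy \ref{eq:Assumptionsfh4} is \emph{not} needed (the proposition's hypotheses omit it); concavity alone guarantees that a given flux level together with a sign of characteristic speed pins down the trace, except possibly at the flux-maximizing state, where the trace is the maximizer of $f_h$ and hence again unique.

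Finally, the ``stationary'' equivalence is essentially a tautology once the construction is understood. If $\mathcal{RP}(\rho_0)=\rho_0$, then each standard Riemann problem has initial left and right states $\rho_{0,h}$ and $\hat\rho_h$ with $f_h(\rho_{0,h})=f_h(\hat\rho_h)$ and the trace equal to $\rho_{0,h}$, so the Riemann problem restricted to $\Omega_h$ is the constant solution $\rho_{0,h}$, hence the network solution is stationary. Conversely, if the network solution is stationary then the restriction of each Riemann solution to $\Omega_h$ is constant equal to $\rho_{0,h}$, which forces $\rho_h(t,0)=\rho_{0,h}$ for all $h$, i.e.\ $\mathcal{RP}(\rho_0)=\rho_0$. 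I expect the main obstacle to be the careful bookkeeping of the one-sided limits and the precise identification of the range of $\Pi^M_h$ with a subset of $\Gamma_h^{\mathrm{out}}$, i.e.\ verifying that the states selected by the priority functions are genuinely admissible boundary data for the standard Riemann problems; once that is in place, the monotonicity of $\Phi$ and the intermediate value theorem close the argument quickly, and uniqueness of the output follows from the rigidity of scalar Riemann-problem traces under concavity.
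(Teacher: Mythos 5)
Your overall strategy is the same as the paper's (parametrize the imposed states by $M$ through $\Pi^M$, use monotonicity of the Godunov-type boundary flux in the imposed state, an intermediate-value argument for the mass constraint, and then read off uniqueness of the traces and the stationarity equivalence from the structure of standard Riemann problems), but two steps as written do not hold. First, the claim that $\{\rho\in K_h\,|\,\hat\eta_h'(\rho-)\le M\le\hat\eta_h'(\rho+)\}$, i.e.\ the range of $\Pi^M_h$, is contained in $\Gamma_h^{\mathrm{out}}$ is false under \ref{eq:Assumptionsetah1}: this assumption imposes no relation whatsoever between $\hat\eta_h$ and $f_h$ (that is what the stronger hypothesis \ref{eq:Assumptionsetah2} is for, and it is deliberately not assumed in this proposition). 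For instance $\hat\eta_h(\rho)=(\rho-c)^2$ with $c$ on the free-flow branch places $\hat\rho_h=\Pi^0_h(z)$ in $\Gamma_h^{\mathrm{in}}$. Consequently your formula ``trace flux $=\min\{d_i(\rho_{0,i}),f_i(\hat\rho_i)\}$'' is wrong whenever $\hat\rho_i\notin\Gamma_i^{\mathrm{out}}$, and taken literally over all of $K_i$ it is not monotone in $\hat\rho_i$, which is the property your intermediate-value argument rests on. The repair is to use the full Godunov flux (minimum of demand of $\rho_{0,i}$ and supply of $\hat\rho_i$, respectively supply of $\rho_{0,j}$ and demand of $\hat\rho_j$), valid for arbitrary $\hat\rho_h\in K_h$; this is exactly what the paper's formulas \eqref{eq:HelpGodunovFluxJunction1}--\eqref{eq:HelpGodunovFluxJunction2} encode, and monotonicity and the limits as $M\to\pm\infty$ then go through.

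Second, your existence step invokes ``continuity of $M\mapsto\Phi(M)$,'' but $\Phi$ is only monotone and of bounded variation: if some $\hat\eta_h$ is affine on a nontrivial interval, the level set of $\hat\eta_h'$ jumps as $M$ crosses the corresponding value and $\Phi$ can jump past $0$, so no root in $M$ alone need exist. This is precisely where \ref{eq:DefPiM} enters: at an $M$ with $\Phi(M-)\ge 0\ge\Phi(M+)$ one must run a second intermediate-value argument in the parameter $z$, using surjectivity of $\Pi^M_h$ onto the level set, to satisfy the constraint; you mention $z$ but never carry out this step, so as written the existence argument has a hole exactly in the case the priority functions were introduced for. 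Two smaller inaccuracies in the last part: the trace $\rho_i(t,0)$ need not lie in $\Gamma_i^{\mathrm{out}}$ (if the demand is fully passed it equals $\rho_{0,i}$), and $f_h(\rho_{0,h})=f_h(\hat\rho_h)$ need not hold when $\mathcal{RP}(\rho_0)=\rho_0$; the correct and sufficient statements, as in the paper, are that the left state together with the (unique) boundary flux determines the restricted Riemann solution, and that a boundary trace equal to $\rho_{0,h}$ forces that restriction to be the constant $\rho_{0,h}$, by monotonicity of the self-similar profile.
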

\begin{proof}
	The proof uses similar arguments as in \cite[Lemma~2.3]{ACD2017}.
	For fixed $M\in\mathbb R$ and $z\in [0,1]$, the standard Riemann problems with initial data 
	\begin{equation*}
		\rho_h(0,x)=
		\begin{cases}
			\rho_{0,h}&\text{if }x\in\Omega_h,\\
			\hat\rho_h &\text{if }x\notin\Omega_h,
		\end{cases}
	\end{equation*}
	and $\hat\rho_h=\Pi^{M}_h(z)$ can be solved uniquely. It remains to find $M\in\mathbb R$ and $z\in [0,1]$ such that the conservation of mass holds. 
	
	The functions $M\mapsto \Pi^{M}_h(0)$ are increasing and of bounded variation since $\hat\eta_h$ are convex and by definition of $\Pi^{M}_h$. We have 
	\begin{align*}
		\lim_{M\to -\infty}\Pi^{M}_h(0)=a_h\quad\text{and}\quad 
		\lim_{M\to +\infty}\Pi^{M}_h(0)=b_h.
	\end{align*}
	Furthermore, $\hat\rho_h\mapsto f_h(\rho_h(t,0))$ is continuous and monotone since  
	\begin{alignat}{2}\label{eq:HelpGodunovFluxJunction1}
		f_i(\rho_i(t,0))&=
		\begin{cases}\displaystyle
			\min_{s\in [\rho_{0,i},\hat\rho_i]}f_i(s)\quad &\text{if } \hat\rho_i\ge \rho_{0,i}\\
			\displaystyle
			\max_{s\in [\hat\rho_i,\rho_{0,i}]}f_i(s)\quad &\text{if } \hat\rho_i\le \rho_{0,i}
		\end{cases}\qquad &&\text{for }i=1,\dots,n,\\
		f_j(\rho_j(t,0))&=
		\begin{cases}\displaystyle
			\min_{s\in [\hat\rho_j,\rho_{0,j},]}f_j(s)\quad &\text{if } \rho_{0,j}\ge \hat\rho_j\\
			\displaystyle
			\max_{s\in [\rho_{0,j},\hat\rho_j]}f_j(s)\quad &\text{if } \rho_{0,j}\le \hat\rho_j
		\end{cases}\qquad &&\text{for }j=n+1,\dots,n+m.\label{eq:HelpGodunovFluxJunction2}
	\end{alignat}
	The above formulae follow from the structure of the standard Riemann problem and from the concavity of $f_h$.
	
	An intermediate value argument applied to the following decreasing function of bounded variation
	\begin{equation*}
		\Xi(M)= \sum_{i=1}^n f_i(\rho_i(t,0)) -\sum_{j=n+1}^{n+m} f_j(\rho_j(t,0))\qquad\text{for } \hat\rho_h=\Pi^{M}_h(0)
	\end{equation*} 
	in combination with 
	\begin{align*}
		\Xi(M)
		\begin{cases}
			\ge 0 \qquad\text{for }M\to -\infty,\\
			\le 0 \qquad\text{for }M\to +\infty,
		\end{cases}
	\end{align*}
	leads to two possible cases: 
	In the first case there exists an $M\in\mathbb R$ such that $\Xi(M)=0$. 
	Otherwise, there exists an $M\in\mathbb R$ such that 
		\begin{align*}
				\Xi(M-)\ge 0 \quad\text{and}\quad
				\Xi(M+)\le 0 .
		\end{align*}
	Due to \ref{eq:DefPiM}, there exists $z\in[0,1]$ such that 
		\begin{align*}
			\sum_{i=1}^n f_i(\rho_i(t,0)) -\sum_{j=n+1}^{n+m} f_j(\rho_j(t,0))=0 \qquad \text{for }
			\hat\rho_h=\Pi^{M}_h(z).
		\end{align*}
	The existence of a solution follows. 
	
	By monotonicity of $\hat\rho_h\mapsto f_h(\rho_h(t,0))$, the uniqueness of $f_h(\rho_h(t,0))$ is obtained. The uniqueness of $\rho_h\colon (0,\infty)_t\times\Omega_h\to K_h$ follows from the uniqueness of $f_h(\rho_h(t,0))$ and the structure of the standard Riemann problem. By the uniqueness we obtain $\rho_h(t,x)=\rho_{0,h}$ for a.e.~$t,x\in (0,\infty)_t\times\Omega_h$ if $\mathcal{RP}(\rho_0)=\rho_0$.
\end{proof}

We continue with the $L^1$--contraction property.
\begin{proposition}\label{prop:ContractionPropertyJunction}
	Let \ref{eq:Assumptionsfh1}--\ref{eq:Assumptionsfh3}, \ref{eq:Assumptionsetah1} and \ref{eq:DefPiM} hold. Let $\mathcal{RP}(\rho^s)=\rho^s$ hold for $\rho_{h}^s\in K_h$, $s=1,2$. Then, we have
	\begin{equation*}
		\sum_{i=1}^n G_i(\rho_i^1,\rho_i^2)-\sum_{j=n+1}^{n+m}G_j(\rho_j^1,\rho_j^2)\ge 0,
	\end{equation*}
	for $G_h(\rho_h^1,\rho_h^2)$ being the Kru\v{z}kov entropy flux defined in \eqref{eq:DefKruzkovEntropyPair}.
\end{proposition}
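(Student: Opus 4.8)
The plan is to exploit the stationarity condition $\mathcal{RP}(\rho^s)=\rho^s$ together with the variational/monotonicity structure of the Riemann solver encoded in \ref{eq:DefPiM}. Recall from the proof of Proposition~\ref{Prop:ExistenceRiemannProblem} that for each $s=1,2$ there exist $M^s\in\mathbb R$ and $z^s\in[0,1]$ with $\rho_h^s=\Pi^{M^s}_h(z^s)$, and in particular $\hat\eta_h'(\rho_h^s-)\le M^s\le \hat\eta_h'(\rho_h^s+)$. By symmetry of the claimed inequality in the two superscripts we may assume $M^1\le M^2$. Since $f_h(\rho_h^s)$ is the Godunov flux at the junction and $\rho^s$ is already the boundary trace, no waves leave the junction; stationarity forces $f_h(\rho_h^s)=f_h(\rho_{0,h}^s)$ but more usefully, for the Kru\v zkov flux we only need to compare the two stationary states directly. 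I would write
\begin{equation*}
	\sum_{i=1}^n G_i(\rho_i^1,\rho_i^2)-\sum_{j=n+1}^{n+m}G_j(\rho_j^1,\rho_j^2)
	=\sum_{h=1}^{n+m}(\pm_h)\,\sgn(\rho_h^1-\rho_h^2)\bigl(f_h(\rho_h^1)-f_h(\rho_h^2)\bigr),
\end{equation*}
and the goal becomes showing that the right-hand side is $\ge 0$ once $M^1\le M^2$.

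The key local fact to establish is a road-by-road sign condition: for each $h$,
\[
	(\pm_h)\,\sgn(\rho_h^1-\rho_h^2)\bigl(f_h(\rho_h^1)-f_h(\rho_h^2)\bigr)\ge -\,\bigl(\text{something telescoping}\bigr),
\]
but I expect the cleaner route is to use that $M\mapsto\Pi^M_h(\cdot)$ and $M\mapsto f_h(\Pi^M_h(\cdot))$ have a monotone structure compatible with the sign $\pm_h$. Concretely: on an incoming road $i$ the admissible stationary states are the ones with outgoing characteristics or zero flux, and $f_i$ is increasing in the relevant branch parametrized by $M$; on an outgoing road $j$, $f_j$ is decreasing in $M$. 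Thus $M^1\le M^2$ gives $\sgn(\rho_i^1-\rho_i^2)$ and $\sgn(f_i(\rho_i^1)-f_i(\rho_i^2))$ the same sign on incoming roads (so the $i$-term with the $+$ sign is $\ge 0$), and opposite signs on outgoing roads (so the $j$-term with the $-$ sign is also $\ge 0$), except on the "plateau'' where $\rho_h^1\ne\rho_h^2$ but $f_h(\rho_h^1)=f_h(\rho_h^2)$, which contributes $0$. Summing over $h$ yields the inequality. The case $M^1=M^2$ (so $\rho_h^s$ differ only through $z^s$ on roads where $\Pi^M_h$ is flat in flux) again gives only zero contributions, since then $f_h(\rho_h^1)=f_h(\rho_h^2)$ for every $h$.

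The main obstacle I anticipate is the careful bookkeeping of the relation between the sign of $\rho_h^1-\rho_h^2$ and the sign of $f_h(\rho_h^1)-f_h(\rho_h^2)$ when the stationary state sits at an interface between an increasing and a decreasing part of $f_h$, i.e.\ when the Godunov minimization/maximization in \eqref{eq:HelpGodunovFluxJunction1}--\eqref{eq:HelpGodunovFluxJunction2} is active; there $f_h$ need not be monotone as a function of the state, only as a function of $M$ along the selected branch. The resolution is to avoid reasoning about $f_h$ as a function of $\rho_h$ and instead work throughout with the monotone-in-$M$ quantities: parametrize both stationary configurations by $(M^s,z^s)$, use that $f_i(\Pi^M_i(z))$ is nondecreasing in $M$ and $f_j(\Pi^M_j(z))$ is nonincreasing in $M$ (which follows from \eqref{eq:HelpGodunovFluxJunction1}--\eqref{eq:HelpGodunovFluxJunction2} since the stationary trace equals its own Godunov flux), and that $\sgn(\rho_h^1-\rho_h^2)$ is controlled by $\sgn(M^1-M^2)$ on the set where $f_h$ differs. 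This mirrors the argument in \cite[Lemma~2.3]{ACD2017} and in \cite{FMR2021} for stationary solutions, adapted to the $\Pi^M$-selection; once the per-road sign is pinned down the summation is immediate and \eqref{eq:MassConservationJunction} is not even needed.
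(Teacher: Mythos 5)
Your proposal has two genuine gaps. First, the identification $\rho_h^s=\Pi^{M^s}_h(z^s)$ that you ``recall'' is not available: a fixed point of $\mathcal{RP}$ only means that the traces from inside $\Omega_h$ coincide with the data, whereas $\Pi^{M^s}_h(z^s)$ is the auxiliary state $\hat\rho_h^s$ of Definition~\ref{def:RiemannSolver}. These differ in general: for instance an incoming road whose state lies in $\Gamma_i^{\mathrm{in}}$ with $f_i(\rho_i^s)<f_i(\hat\rho_i^s)$ is stationary, because the Godunov formula \eqref{eq:HelpGodunovFluxJunction1} returns $f_i(\rho_i^s)$, even though $\rho_i^s\neq\hat\rho_i^s$. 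The whole difficulty of the proposition, and the content of Steps~2--3 of the paper's proof, is exactly this mismatch between the traces $\rho_h^s$ and the hat states; an argument phrased purely in terms of $(M^s,z^s)$ never confronts it. Moreover the monotonicity you invoke is not justified: $f_h\circ\Pi^M_h$ is not monotone in $M$ (the flux is concave and $\Pi^M_h$ sweeps all of $K_h$), and the Godunov fluxes at the junction are monotone with the opposite orientation to what you state (nonincreasing in $M$ on incoming roads, nondecreasing on outgoing ones, which is why $\Xi$ in Proposition~\ref{Prop:ExistenceRiemannProblem} is decreasing).

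Second, the road-by-road sign claim is false, and with it your remark that \eqref{eq:MassConservationJunction} ``is not even needed''. Take $n=m=1$, $f_1=f_2=f$ strictly concave with maximum at $1/2$, $\hat\eta_1=\hat\eta_2$ strictly convex, and the two fixed points $\rho^1=(0.3,0.3)$ and $\rho^2=(0.4,0.4)$ (both stationary, with $\hat\rho_h^s=\rho_h^s$). Then $G_1(\rho_1^1,\rho_1^2)=|f(0.4)-f(0.3)|>0$ but the outgoing contribution is $-G_2(\rho_2^1,\rho_2^2)=-|f(0.4)-f(0.3)|<0$, so the per-road terms are not individually nonnegative; the total is $\ge 0$ only because the incoming and outgoing fluxes are tied together by the conservation constraint. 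This is precisely why the paper's first step subtracts $\theta\bigl(\sum_i(f_i(\rho_i^1)-f_i(\rho_i^2))-\sum_j(f_j(\rho_j^1)-f_j(\rho_j^2))\bigr)=0$, i.e.\ uses \eqref{eq:MassConservationJunction} for both stationary states before any sign discussion, and then removes the residual negative terms by a contradiction argument (the sets $\mathcal A$, $\mathcal B$) that exploits both the common sign $\theta$ of $\hat\rho_h^1-\hat\rho_h^2$ and the position of $\rho_h^s$ relative to $\Gamma_h^{\mathrm{in}}$ and $\Gamma_h^{\mathrm{out}}$ via the Godunov flux. To repair your argument you would need both ingredients; the common sign $\theta$, which you do obtain from \ref{eq:DefPiM}, is not sufficient on its own.
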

\begin{proof}
Let $M^s, z^s$ be as in Definition~\ref{def:RiemannSolver} with the constant initial data $\rho_{0,h}=\rho_h^s$. Without loss of generality we can assume that $\Pi_1^{M^1}(z^1)-\Pi_1^{M^2}(z^2)\neq 0$ if $\Pi_h^{M^1}(z^1)-\Pi_h^{M^2}(z^2)\neq 0$ for some $h=1,\dots,n+m$. If this is not the case, we can reorder the indices and possibly exchange the role of $i$ and $j$. We define 
	\begin{equation*}
		\theta :=\sgn(\Pi_1^{M^1}(z^1)-\Pi_1^{M^2}(z^2)).
	\end{equation*}
	By monotonicity of $\Pi_h^M(z)$ w.r.t.~$M$ and $z$, we get
	\begin{equation*}
		\sgn(\Pi_h^{M^1}(z^1)-\Pi_h^{M^2}(z^2))=\theta \qquad \text{for }h=1,\dots,n+m,
	\end{equation*}
	with the convention $\sgn(0)=\theta$. \\ 
\noindent
\underline{Step 1:} We compute
	\begin{align*}
		&\sum_{i=1}^n G_i(\rho_i^1,\rho_i^2)-\sum_{j=n+1}^{n+m}G_j(\rho_j^1,\rho_j^2)\\
		&= \sum_{i=1}^n\sgn(\rho_i^1-\rho_i^2)( f_i(\rho_i^1)-f_i(\rho_i^2))-\sum_{j=n+1}^{n+m}\sgn(\rho_j^1-\rho_j^2)(f_j(\rho_j^1)-f_j(\rho_j^2))\\
		&\quad -\theta\left( \sum_{i=1}^n( f_i(\rho_i^1)-f_i(\rho_i^2))-\sum_{j=n+1}^{n+m}(f_j(\rho_j^1)-f_j(\rho_j^2))\right)\\
		&=\sum_{i=1}^n(\sgn(\rho_i^1-\rho_i^2)-\theta)( f_i(\rho_i^1)-f_i(\rho_i^2)) -\sum_{j=n+1}^{n+m}(\sgn(\rho_j^1-\rho_j^2)-\theta)(f_j(\rho_j^1)-f_j(\rho_j^2))\\
		&\ge\sum_{i\in\mathcal A}(\sgn(\rho_i^1-\rho_i^2)-\theta)( f_i(\rho_i^1)-f_i(\rho_i^2)) -\sum_{j\in\mathcal B}(\sgn(\rho_j^1-\rho_j^2)-\theta)(f_j(\rho_j^1)-f_j(\rho_j^2)).
	\end{align*}
The sets $\mathcal A\subset\{1,\dots,n\}$ and $\mathcal B\subset\{1,\dots,n\}$ are defined by 
	\begin{align*}
		\mathcal A&=\{i\,|\, (\sgn(\rho_i^1-\rho_i^2)-\theta)( f_i(\rho_i^1)-f_i(\rho_i^2))<0\},\\
		\mathcal B&=\{j\, | \, (\sgn(\rho_j^1-\rho_j^2)-\theta)( f_j(\rho_j^1)-f_j(\rho_j^2))>0\}.
	\end{align*}
Since $\rho_i^1\neq \rho_i^2$ for $i\in\mathcal A$ (otherwise $f_i(\rho_i^1)=f_i(\rho_i^2)$), we get 
	\begin{align*}
	\mathcal A&=\{i\,|\,\sgn(\rho_i^1-\rho_i^2)\neq\theta\text{ and }\sgn(\rho_i^1-\rho_i^2)\,( f_i(\rho_i^1)-f_i(\rho_i^2))< 0\}.	
	\end{align*}
	
\noindent
\underline{Step 2:} We collect some properties of $\mathcal A$:
\begin{itemize}
	\item $\rho_i^1\neq \rho_i^2$ as already observed above;
	\item {$\{\rho_i^1,\rho_i^2\}\cap \Gamma_i^\mathrm{out}\neq \{\rho_i^1,\rho_i^2\}$} since otherwise $\sgn(\rho_i^1-\rho_i^2)=\sgn(\Pi_i^{M^1}(z^1)-\Pi_i^{M^2}(z^2))=\theta$;
	\item {$\{\rho_i^1,\rho_i^2\}\cap \Gamma_i^\mathrm{in}\neq \{\rho_i^1,\rho_i^2\}$} since otherwise $\sgn(\rho_i^1-\rho_i^2)\,( f_i(\rho_i^1)-f_i(\rho_i^2))\ge 0$ due to the fact that $f_i$ is increasing on $\Gamma_i^\mathrm{in}$. 
\end{itemize}
We conclude that exactly one element of $\{\rho_i^1,\rho_i^2\}$ lies in $\Gamma_i^\mathrm{out}$ and exactly one lies in $\Gamma_i^\mathrm{in}$. 

\noindent
\underline{Step 3:} 
Without loss of generality we assume that $\rho_i^1\in \Gamma_i^\mathrm{out}$ and $\rho_i^2\in \Gamma_i^\mathrm{in}$. Then, there exists $ R_i^2\in K_i$ such that $ R_i^2=\Pi_i^{M^2}(z^2)$. Since $1=\sgn(\rho_i^1-\rho_i^2)\neq\theta=\sgn(\Pi_i^{M^1}(z^1)-\Pi_i^{M^2}(z^2))=\sgn(\rho_i^1-R_i^2)$, we have $\rho_i^1\le R_i^2\in \Gamma_i^\mathrm{out}$. We obtain
\begin{itemize}
	\item $f_i(R_i^2)\le f_i(\rho_i^1)$ by monotonicity of $f_i$ on $\Gamma_i^\mathrm{out}$ and $\rho_i^1\le R_i^2$;
	\item $f_i(\rho_i^{1})<f_i(\rho_i^{2})$ since $\sgn(\rho_i^1-\rho_i^2)\,( f_i(\rho_i^1)-f_i(\rho_i^2))< 0$ and $\sgn(\rho_i^1-\rho_i^2)>0$;
	\item $f_i(\rho_i^{2})\le f_i(R_i^2)$ by the construction of $\rho_i^{2}$ in Definition~\ref{def:RiemannSolver} based on standard Riemann problems with right--hand state $R_i^2=\Pi_i^{M^2}(z^2)\in \Gamma_i^\mathrm{out}$. 
\end{itemize}
The three estimates lead to the contradiction 
	\begin{equation*}
		f_i(R_i^2)\le f_i(\rho_i^1)<f_i(\rho_i^{2})\le f_i(R_i^2)
	\end{equation*}
and we conclude that $\mathcal A=\emptyset$. With similar arguments $\mathcal B=\emptyset$ follows and we conclude 
\begin{align*}
		&\sum_{i=1}^n G_i(\rho_i^1,\rho_i^2)-\sum_{j=n+1}^{n+m}G_j(\rho_j^1,\rho_j^2)\ge 0.
\end{align*}
\end{proof}
Later we will see that the inequalities on the Kru\v{z}kov entropy fluxes give the natural conditions leading to the $L^1$--contraction property for the Cauchy problem.

\subsection{The maximum entropy dissipation problem}

\begin{proposition}\label{prop:RelationRiemannProblemMaximumEntropy}
	Let \ref{eq:Assumptionsfh1}--\ref{eq:Assumptionsfh3}, \ref{eq:Assumptionsetah2} and \ref{eq:DefPiM} hold. There exists a solution to the maximum entropy dissipation problem in Definition~\ref{def:MacroMaximumEntropyProb} solving the coupling condition 
	\begin{equation*}
		\rho_h(t,0)=\mathcal{RP}_h(\rho_{0}).
	\end{equation*}
	The solution is unique if $\hat\eta_h$ are strictly convex.
\end{proposition}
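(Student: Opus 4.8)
The plan is to show that the Holden--Risebro Riemann solver $\mathcal{RP}$ from Definition~\ref{def:RiemannSolver} produces a state $(\rho_h(t,0))_h$ that is in fact a maximizer of the functional in Definition~\ref{def:MacroMaximumEntropyProb}, and conversely that every maximizer arises this way; uniqueness under strict convexity then follows. First I would fix the output of the Riemann solver: by Proposition~\ref{Prop:ExistenceRiemannProblem} there exist $M\in\mathbb R$, $z\in[0,1]$ and states $\hat\rho_h=\Pi^M_h(z)$ so that the standard Riemann problems with right-/left-states $\hat\rho_h$ satisfy mass conservation, and I set $\rho_h(t,0)=\mathcal{RP}_h(\rho_0)$. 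The first task is to observe that the relevant trace in the objective of Definition~\ref{def:MacroMaximumEntropyProb}, namely $\rho_i(t,0+)$ for incoming and $\rho_j(t,0-)$ for outgoing roads, is exactly $\hat\rho_h=\Pi^M_h(z)$ up to a stationary shock; more precisely $f_h(\rho_h(t,0))=f_h(\rho_h(t,0\pm_h))$ by the Rankine--Hugoniot analysis in Section~2.1, so the constraint $\sum_i f_i-\sum_j f_j=0$ in Definition~\ref{def:MacroMaximumEntropyProb} is equivalent to the mass-conservation constraint in Definition~\ref{def:RiemannSolver}, hence is satisfied.

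The heart of the argument is optimality. The key step is to exploit assumption~\ref{eq:Assumptionsetah2}, which guarantees that on $\Gamma_h^{\mathrm{in}}$ (where $\hat\eta_h'$ and $f_h'$ have opposite signs in the sense forced by the in/out convention) the entropy flux $\hat G_h$, with $\hat G_h'=\hat\eta_h' f_h'\le 0$ on $K_h$, is a \emph{monotone} function of the flux value: writing $\hat G_h$ as a function of $f_h$ along the branch of $f_h$ that the trace lives on, one checks that $\pm_h \hat G_h$ is nondecreasing in $\pm_h f_h$ on the outgoing branch $\Gamma_h^{\mathrm{out}}$. Since by construction $\hat\rho_h=\Pi^M_h(z)\in\Gamma_h^{\mathrm{out}}$ and the Godunov-type formulae \eqref{eq:HelpGodunovFluxJunction1}--\eqref{eq:HelpGodunovFluxJunction2} describe exactly which flux values are attainable, one reduces the maximization over $\tilde\rho_h$ to a maximization of $\sum_i (\text{incr.\ fn of }f_i)-\sum_j(\text{incr.\ fn of }f_j)$ subject to $\sum_i f_i=\sum_j f_j$. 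A Lagrange/KKT argument with multiplier $M$ then shows that the stationarity condition is precisely $\hat\eta_h'(\hat\rho_h-)\le M\le\hat\eta_h'(\hat\rho_h+)$ together with the admissible-set constraint defining $\Pi^M_h$ — i.e.\ the conditions in Definition~\ref{def:RiemannSolver}. This is most cleanly done by comparing an arbitrary competitor $\tilde\rho_h$ with $\hat\rho_h$: the concavity of the objective in the flux variables (since $\hat\eta_h$ convex forces the composed functions to be concave in $f_h$ along the monotone branch) makes the first-order condition sufficient for a global max.

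Finally, for uniqueness under strict convexity of $\hat\eta_h$: a strictly convex $\hat\eta_h$ makes $\hat\eta_h'$ strictly increasing, so the objective is \emph{strictly} concave as a function of the admissible flux values $(f_h)$, whence the maximizing tuple $(f_h(\rho_h(t,0)))_h$ is unique; the structure of the standard Riemann problem then pins down $\rho_h(t,0)$ and the whole profile $\rho_h$ uniquely, exactly as in the uniqueness part of Proposition~\ref{Prop:ExistenceRiemannProblem}. I expect the main obstacle to be the bookkeeping around the two different traces $\rho_h(t,0)$ and $\rho_h(t,0\pm_h)$ and the sign conventions $\pm_h$: one must be careful that the objective in Definition~\ref{def:MacroMaximumEntropyProb} is written in the "disappearing" traces $\rho_i(t,0+),\rho_j(t,0-)$, verify these lie in $\Gamma_h^{\mathrm{out}}$ for the solver's output, and check that replacing $\hat G_h$ evaluated at these traces by a monotone function of the common flux value is legitimate — this is where \ref{eq:Assumptionsetah2} is genuinely used, and getting the monotonicity direction right on each branch is the delicate point. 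The rest is a routine constrained-optimization argument.
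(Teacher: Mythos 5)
Your proposal follows essentially the same route as the paper's proof: reduce the optimization in Definition~\ref{def:MacroMaximumEntropyProb} to the flux variables $\tilde F_h=f_h(\tilde\rho_h)$ on $\Gamma_h^\mathrm{out}$ via the Godunov formulae \eqref{eq:HelpGodunovFluxJunction1}--\eqref{eq:HelpGodunovFluxJunction2}, use \ref{eq:Assumptionsetah2} together with the concavity of $f_h$ to get a concave objective with affine constraints, identify the KKT multiplier with the parameter $M$ of Definition~\ref{def:RiemannSolver} (concavity making the first-order conditions sufficient), and obtain uniqueness from strict convexity of $\hat\eta_h$. So the skeleton is right, and it is the paper's skeleton.

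Two of the bookkeeping steps you flag would not survive as stated, and both are exactly where the paper inserts an extra argument. First, it is \emph{not} true in general that the disappearing traces $\rho_h(t,0\pm_h)$ of the solver's output lie in $\Gamma_h^\mathrm{out}$: if on an incoming road both $\rho_{0,i}$ and $\hat\rho_i$ generate only waves moving toward the junction, then $\rho_i(t,0+)=\rho_{0,i}\notin\Gamma_i^\mathrm{out}$, so you cannot simply evaluate the objective of Definition~\ref{def:MacroMaximumEntropyProb} at the solver's own $\hat\rho_h$ and read off optimality. The paper avoids this by first using \eqref{eq:HelEquivalenceRiemannSolverOptimziationProb1} and the monotonicity of $\hat G_h(\rho_h(t,0\pm_h))$ in $\tilde\rho_h$ to pass, without changing the restricted solution, to an equivalent admissible representative $\tilde\rho_h\in\Gamma_h^\mathrm{out}$ (the added constraint \eqref{eq:HelEquivalenceRiemannSolverOptimziationProb2}); only for such representatives does $\rho_h(t,0\pm_h)=\tilde\rho_h$ hold and the objective become $\sum_i\hat G_i(\tilde\rho_i)-\sum_j\hat G_j(\tilde\rho_j)$, after which your KKT argument applies and the final step is to check that replacing the optimal $\tilde\rho_h$ by $\hat\rho_h$ with $\hat\eta_h'(\hat\rho_h-)\le M\le\hat\eta_h'(\hat\rho_h+)$ leaves the traces unchanged. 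Your proof needs this replacement step; it is the missing idea rather than a routine verification. Second, your stated monotonicity on the outgoing roads is flipped: by \ref{eq:Assumptionsetah2} one has $\hat\eta_j'\le 0$ on $\Gamma_j^\mathrm{out}$, so the objective term $-\hat G_j(f_j^{-1}|_{\Gamma_j^\mathrm{out}}(\tilde F_j))$ has derivative $-\hat\eta_j'\ge 0$ and is nondecreasing in $\tilde F_j$; the reduced objective is nondecreasing in \emph{every} flux variable, whereas your "$\sum_i(\text{increasing in }f_i)-\sum_j(\text{increasing in }f_j)$" would, taken literally, drive the maximizer to zero flux. This sign slip does not affect the concavity/KKT core, but it must be corrected before the multiplier identification reproduces Definition~\ref{def:RiemannSolver}. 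With these two repairs your argument coincides with the paper's, including the uniqueness step.
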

\begin{proof}~\\
	\noindent
	\textit{Existence:} Replacing $\hat\rho_h$ by $\tilde\rho_h$ in \eqref{eq:HelpGodunovFluxJunction1}--\eqref{eq:HelpGodunovFluxJunction2} implies that $f_h(\rho_h(t,0))$ is a continuous function of $\tilde\rho_h$.
	Furthermore, $\rho_i(t,0)$ are increasing and upper semi-continuous w.r.t. $\tilde\rho_i$. The continuous entropy fluxes $\hat G_h$ are decreasing due to \ref{eq:Assumptionsetah2}.
	We conclude that $\hat G_i(\rho_i(t,0))$ is decreasing and lower semi-continuous w.r.t. $\tilde\rho_i$. By the same arguments we obtain that $\hat G_j(\rho_j(t,0))$ is decreasing and upper semi-continuous w.r.t. $\tilde\rho_j$.
	We obtain the existence by compactness of $K_h$.
	
	\noindent
	\textit{Reformulation of Definition~\ref{def:MacroMaximumEntropyProb}:}
	By using \eqref{eq:HelpGodunovFluxJunction1}--\eqref{eq:HelpGodunovFluxJunction2} again, it can be easily checked that  
	\begin{align}\label{eq:HelEquivalenceRiemannSolverOptimziationProb1}
	f_h(\rho_h(t,0)) 
	\text{ is constant for }
		\begin{cases}
			\tilde\rho_h\in \Gamma_h^\mathrm{in}\quad &\text{ if }\rho_{0,h}\in\Gamma_h^\mathrm{out},\\
			\tilde\rho_h\notin \{\rho\in\Gamma_h^\mathrm{out}\,|\,f_h(\rho)\le f_h(\rho_{0,h})\}\quad &\text{ if }\rho_{0,h}\in\Gamma_h^\mathrm{in}.
		\end{cases}
	\end{align}
	By \eqref{eq:HelEquivalenceRiemannSolverOptimziationProb1} and by the monotonicity of $\hat G_h(\rho_h(t,0\pm_h))$ w.r.t.~$\tilde\rho_h$, the following additional constraint can be added without changing the solution on $\Omega_h$:
	\begin{align}\label{eq:HelEquivalenceRiemannSolverOptimziationProb2}
	\begin{split}
		\tilde\rho_h\in \Gamma_h^\mathrm{out}\quad &\text{ if }\rho_{0,h}\in\Gamma_h^\mathrm{out},\\
		\tilde\rho_h\in\{\rho\in\Gamma_h^\mathrm{out}\,|\,f_h(\rho)\le f_h(\rho_{0,h})\}\quad &\text{ if }\rho_{0,h}\in\Gamma_h^\mathrm{in}.
	\end{split}
	\end{align}
	
	Remark that the values $\tilde\rho_h$ with $f_h(\tilde\rho_h)=f_h^{\max}=\max f_h(\Gamma_h^\mathrm{out})$ were not uniquely defined through the original problem. By adding the additional constraint \eqref{eq:HelEquivalenceRiemannSolverOptimziationProb2}, only the value $\tilde\rho_h=\operatorname{argmax}f_h(\Gamma_h^\mathrm{out})$ is admissible. 
	
	For $\tilde\rho_h$ satisfying \eqref{eq:HelEquivalenceRiemannSolverOptimziationProb2}, we have $\rho_h(t,0\pm_h)=\tilde\rho_h$, the objective function reads 
	\begin{align}\label{eq:HelEquivalenceRiemannSolverOptimziationProb22}
		&\sum_{i=1}^n \hat G_i(\tilde\rho_i) -\sum_{j=n+1}^{n+m} \hat G_j(\tilde\rho_j),
	\end{align}
	and the constraint in Definition~\ref{def:MacroMaximumEntropyProb} is
	\begin{align}\label{eq:HelEquivalenceRiemannSolverOptimziationProb23}
	 	&\sum_{i=1}^n f_i(\tilde\rho_i) -\sum_{j=n+1}^{n+m} f_j(\tilde\rho_j)=0.
	\end{align}
	We write the optimization problem in terms of $\tilde  F_h=f_h(\tilde\rho_h)$ similar to \cite{GHP2016}. This is equivalent to the original problem since $f_h$ is invertible on $\Gamma_h^\mathrm{out}$ and $\tilde\rho_h\in \Gamma_h^\mathrm{out}$ due to \eqref{eq:HelEquivalenceRiemannSolverOptimziationProb2}
. We obtain
	\begin{alignat}{3} \label{eq:HelEquivalenceRiemannSolverOptimziationProb3}
	\max & \quad & & \sum_{i=1}^n \hat G_{i}(f_i^{-1}|_{\Gamma_i^\mathrm{out}}(\tilde  F_i))-\sum_{j=n+1}^{n+m} \hat G_{j}(f_j^{-1}|_{\Gamma_j^\mathrm{out}}(\tilde  F_j))\\
	\text{s.t.}&&&  \sum_{i=1}^n  \tilde  F_i -\sum_{j=n+1}^{n+m} \tilde  F_j=0\nonumber \\
	&&& 0\le \tilde  F_h\le  f_h^{\max}\qquad\ \text{ if }\rho_{0,h}\in\Gamma_h^\mathrm{out}\nonumber\\
	&&&	0\le \tilde  F_h\le f_h(\rho_{0,h})\quad \text{ if }\rho_{0,h}\in\Gamma_h^\mathrm{in}\nonumber
	\end{alignat}

	\noindent
	\textit{The optimization problem \eqref{eq:HelEquivalenceRiemannSolverOptimziationProb3} has a concave objective function and affine constraints:}
	The function $\hat G_{i}(f_i^{-1}|_{\Gamma_i^\mathrm{out}}(\tilde  F_i))$ is concave:
		$f_i^{-1}|_{\Gamma_i^\mathrm{out}}$ is convex since $f_i|_{\Gamma_i^\mathrm{out}}$ is decreasing and concave.
		$\hat G_i$ is concave and decreasing on $\Gamma_i^\mathrm{out}$ since $\hat G'_i=\hat\eta_i'\,f_i'$ is decreasing and negative \ref{eq:Assumptionsetah2} on $\Gamma_i^\mathrm{out}$.
	The concavity of $-\hat G_{j}(f_j^{-1}|_{\Gamma_j^\mathrm{out}}(\tilde  F_j))$ follows by similar arguments.

	\noindent
	\textit{Application of the KKT-conditions to \eqref{eq:HelEquivalenceRiemannSolverOptimziationProb3}:} Due to the structure of  \eqref{eq:HelEquivalenceRiemannSolverOptimziationProb3}, the KKT-conditions are necessary and sufficient. They read:
	\begin{align*}
		&\pm_h \hat\eta_h'(f^{-1}_h|_{\Gamma_h^\mathrm{out}}(\tilde F_h)\mp_h)\le\pm_h \lambda+\mu_h -\nu_h\le \pm_h \hat\eta_h'(f^{-1}_h|_{\Gamma_h^\mathrm{out}}(\tilde F_h)\pm_h) \quad\text{for }h=1,\dots,n+m,\\
		&\mu_h=  0\quad\text{or}\quad \tilde F_h=\begin{cases}
			f_h^{\max}\quad &\text{ if }\rho_{0,h}\in\Gamma_h^\mathrm{out},\\
			f_h(\rho_{0,h})\quad &\text{ if }\rho_{0,h}\in\Gamma_h^\mathrm{in},\\
		\end{cases}\\
		&\nu_h=  0\quad\text{or}\quad \tilde F_h=0,
	\end{align*}
	for $\lambda\in\mathbb R, \mu_h,\nu_h\in [0,\infty)$.
	With the convention \eqref{eq:HelpConventrionDerHatEtaBoundKh},
	then there exists $M\in\mathbb R$ such that for every $h=1,\dots,n+m$ one of the following conditions hold
	\begin{align} 
	\begin{split}
		&\hat\eta_h'(f_h^{-1}|_{\Gamma_h^\mathrm{out}}(\tilde  F_h)-)\le M\le \hat\eta_h'(f_h^{-1}|_{\Gamma_h^\mathrm{out}}(\tilde  F_h)+),\\
		&\tilde  F_i=
			\begin{cases}
				f_i^{\operatorname{max}} &\text{if }\rho_{0,i}\in\Gamma_i^\mathrm{out}\\
				f_i(\rho_{0,i}) &\text{if }\rho_{0,i}\in\Gamma_i^\mathrm{in}
			\end{cases}
			\quad\text{with}\quad M\le \hat\eta_i'(f_i^{-1}|_{\Gamma_i^\mathrm{out}}(\tilde  F_i)-),\\
		&\tilde  F_j=
			\begin{cases}
				f_j^{\operatorname{max}} &\text{if }\rho_{0,j}\in\Gamma_j^\mathrm{out}\\
				f_j(\rho_{0,j}) &\text{if }\rho_{0,j}\in\Gamma_j^\mathrm{in}
			\end{cases}
			\quad\text{with}\quad \hat\eta_j'(f_j^{-1}|_{\Gamma_j^\mathrm{out}}(\tilde  F_j)+)\le M.
	\end{split}
	\end{align}
	
	\noindent
	\textit{Solutions to Definition~\ref{def:MacroMaximumEntropyProb}:}
	Using the fact that problem \eqref{eq:HelEquivalenceRiemannSolverOptimziationProb3} and Definition~\ref{def:MacroMaximumEntropyProb} are equivalent for $\tilde  F_h=f_h(\tilde\rho_h)$, we obtain that $\tilde \rho_h$ solves one of the following conditions
	\begin{align}\label{eq:HelEquivalenceRiemannSolverOptimziationProb4}
	\begin{split}
		&\hat\eta_h'(\tilde \rho_h-)\le M\le\hat\eta_h'(\tilde \rho_h+)\quad\text{for }h=1,\dots,n+m,\\
		&\tilde \rho_i=
		\begin{cases}
			\operatorname{min}\, \Gamma_i^\mathrm{out}\ &\text{if }\rho_{0,i}\in\Gamma_i^\mathrm{out}\\
			\operatorname{min}\, \{\Gamma_i^\mathrm{out}\,|\,f_i(\cdot)\le f_i(\rho_{0,i})\}\ &\text{if }\rho_{0,i}\in\Gamma_i^\mathrm{in}
		\end{cases}
		\quad\text{with }\hat\eta_i'(\tilde\rho_i-)\le M,\\
		&\tilde \rho_j=
		\begin{cases}
			\operatorname{max}\, \Gamma_j^\mathrm{out}\ &\text{if }\rho_{0,j}\in\Gamma_j^\mathrm{out}\\
			\operatorname{max}\, \{\Gamma_j^\mathrm{out}\,|\,f_j(\cdot)\le f_j(\rho_{0,j})\}\ &\text{if }\rho_{0,j}\in\Gamma_j^\mathrm{in}
		\end{cases}
		\quad\text{with }M\le \hat\eta_j'(\tilde\rho_j+).\\
		\end{split}
	\end{align}
	Since the summands in the first constraint in the original optimization problem are constant for $\tilde\rho_h$ as in \eqref{eq:HelEquivalenceRiemannSolverOptimziationProb1}, the traces $\rho_h(t,0\mp)$ do not change if we solve the standard Riemann problems with $\hat\rho_h$ satisfying
	\begin{align*}
		&\hat\eta_h'(\hat \rho_h-)\le M\le \hat\eta_h'(\hat \rho_h+)\qquad\text{for }h=1,\dots,n+m
	\end{align*}
	replacing $\tilde\rho_h$ satisfying \eqref{eq:HelEquivalenceRiemannSolverOptimziationProb4}. Therefore, we have $\rho_h(t,0)=\mathcal{RP}_h(\rho_0)$.
	
	\noindent
	\textit{Uniqueness:} \eqref{eq:HelEquivalenceRiemannSolverOptimziationProb4} uniquely characterizes the solutions $\tilde\rho_h$ to \eqref{eq:HelEquivalenceRiemannSolverOptimziationProb2}--\eqref{eq:HelEquivalenceRiemannSolverOptimziationProb23} for strictly convex $\hat\eta_h$. With the remark after \eqref{eq:HelEquivalenceRiemannSolverOptimziationProb2}, the solutions $\rho_h$ on $\Omega_h$ are uniquely defined and therefore also $\rho_h(t,0)$.
\end{proof}

\subsection{Equivalence between solutions given by Definition~\ref{def:HoldenRisebro} and Definition~\ref{def:MacroMaximumEntropyProb}}

\begin{proposition}
	Let \ref{eq:Assumptionsfh1}--\ref{eq:Assumptionsfh3}. Let $\hat g_h\in C^{0,1}( [0,1],\mathbb R)$ and let $\hat G_h$ be the entropy flux corresponding to $\hat \eta_h$ satisfying
	\begin{equation*}
		 \hat g_h(y)=
			 \pm_h \hat G_h(f_h^{-1}|_{\Gamma_h^\mathrm{out}}(y\cdot f_h^{\max})) 
\qquad\text{for a.e.~}y\in [0,1], \text{ for~} h=1,\dots,n+m.
	\end{equation*}
	Then, $\hat g_h$ is concave if and only if $\hat G_h$ is an entropy flux to a convex entropy $\hat \eta_h$.
	If we assume additionally that $\hat g_h$ is concave, then Definition~\ref{def:HoldenRisebro} and Definition~\ref{def:MacroMaximumEntropyProb} are equivalent.
\end{proposition}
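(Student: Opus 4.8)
The plan is to prove the two assertions in turn.

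\emph{First assertion.} The key observation is that $\hat g_h$ is nothing but $\hat G_h$ read in the rescaled–flux coordinate along $\Gamma_h^{\mathrm{out}}$. Put $\kappa_h(y):=f_h^{-1}|_{\Gamma_h^{\mathrm{out}}}(y\,f_h^{\max})$. By \ref{eq:Assumptionsfh2} the restriction $f_h|_{\Gamma_h^{\mathrm{out}}}$ is strictly monotone and concave (so that $f_h^{-1}|_{\Gamma_h^{\mathrm{out}}}$ is convex, cf.\ the proof of Proposition~\ref{prop:RelationRiemannProblemMaximumEntropy}), hence $\kappa_h$ is a monotone Lipschitz bijection $[0,1]\to\Gamma_h^{\mathrm{out}}$ with $\kappa_h'(y)=f_h^{\max}/f_h'(\kappa_h(y))$ a.e., where $f_h'$ is taken in its monotone representative (nonzero on $\operatorname{Int}\,\Gamma_h^{\mathrm{out}}$) and the convention \eqref{eq:HelpConventrionDerHatEtaBoundKh} is used at the maximum point of $f_h$. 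Since $\hat g_h\in C^{0,1}$, differentiating the defining identity $\hat g_h(y)=\pm_h\hat G_h(\kappa_h(y))$ and substituting $\hat G_h'=\hat\eta_h'\,f_h'$ from \eqref{eq:DefEntropyPairs}, the factors $f_h'$ cancel and one obtains $\hat g_h'(y)=\pm_h\,f_h^{\max}\,\hat\eta_h'(\kappa_h(y))$ for a.e.\ $y$. As $f_h^{\max}>0$ and $\kappa_h$ is a monotone bijection, $\hat g_h'$ is non-increasing precisely when $\pm_h\,\hat\eta_h'\circ\kappa_h$ is, which—matching the sign $\pm_h$ against the monotonicity direction of $\kappa_h$—is equivalent to $\hat\eta_h'$ being non-decreasing on $\Gamma_h^{\mathrm{out}}$, i.e.\ to $\hat\eta_h$ being convex on $\Gamma_h^{\mathrm{out}}$. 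From this, ``$\Leftarrow$'' is immediate (restrict a convex entropy on $K_h$ to $\Gamma_h^{\mathrm{out}}$). For ``$\Rightarrow$'', the same identity forces $\hat\eta_h'=\pm_h(f_h^{\max})^{-1}\hat g_h'(f_h(\cdot)/f_h^{\max})$ on $\operatorname{Int}\,\Gamma_h^{\mathrm{out}}$ up to an additive constant, a bounded monotone function there; I integrate it to obtain $\hat\eta_h$ on $\Gamma_h^{\mathrm{out}}$ and prolong it beyond the maximum point of $f_h$ to a convex function on all of $K_h$ (continuing with the one-sided slope at the endpoint), whose entropy flux then reproduces $\hat G_h$ on $\Gamma_h^{\mathrm{out}}$, so that the defining identity holds a.e.\ on $[0,1]$.

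\emph{Second assertion.} Assume $\hat g_h$ concave. The plan is to reduce both generalized Riemann problems to one and the same concave program in the flux variables $\tilde F_h=f_h(\tilde\rho_h)$. For Definition~\ref{def:MacroMaximumEntropyProb} this is exactly the reduction in the proof of Proposition~\ref{prop:RelationRiemannProblemMaximumEntropy}: adding constraint \eqref{eq:HelEquivalenceRiemannSolverOptimziationProb2} changes neither the objective nor the solution on $\Omega_h$, and the problem becomes \eqref{eq:HelEquivalenceRiemannSolverOptimziationProb3}, whose objective $\sum_h\pm_h\hat G_h(f_h^{-1}|_{\Gamma_h^{\mathrm{out}}}(\tilde F_h))$ equals $\sum_h\hat g_h(\tilde F_h/f_h^{\max})$ by the defining identity. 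For Definition~\ref{def:HoldenRisebro} I would argue analogously: by the Godunov-type formulas \eqref{eq:HelpGodunovFluxJunction1}--\eqref{eq:HelpGodunovFluxJunction2} (with $\tilde\rho_h$ in place of $\hat\rho_h$), the trace flux $f_h(\rho_h(t,0))$ depends on $\tilde\rho_h$ only through the part of $\Gamma_h^{\mathrm{out}}$ isolated in \eqref{eq:HelEquivalenceRiemannSolverOptimziationProb1}, and as $\tilde\rho_h$ runs over $K_h$ it runs over exactly the box of \eqref{eq:HelEquivalenceRiemannSolverOptimziationProb3}; setting $\tilde F_h=f_h(\rho_h(t,0))$, the objective $\sum_h\hat g_h(f_h(\rho_h(t,0))/f_h^{\max})$ of Definition~\ref{def:HoldenRisebro} turns into $\sum_h\hat g_h(\tilde F_h/f_h^{\max})$ over the same feasible set, i.e.\ into \eqref{eq:HelEquivalenceRiemannSolverOptimziationProb3} again. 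Hence the two Riemann problems are governed by the identical program; any maximizer $(\tilde F_h)$ produces the same junction fluxes $f_h(\rho_h(t,0))=\tilde F_h$, hence, by the structure of the standard Riemann problem (as in Proposition~\ref{Prop:ExistenceRiemannProblem}), the same solutions $\rho_h$ restricted to $\Omega_h$. The only residual ambiguity, at $\tilde F_h=f_h^{\max}$ where $\tilde\rho_h$ is not pinned down, is removed the same way in both definitions (by \eqref{eq:HelEquivalenceRiemannSolverOptimziationProb2} in Definition~\ref{def:MacroMaximumEntropyProb}, and because every admissible such $\tilde\rho_h$ gives the same $\rho_h(t,0)$ in Definition~\ref{def:HoldenRisebro}). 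This is the asserted equivalence.

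\emph{Where the work is.} The delicate step is the sign- and orientation-bookkeeping in the first assertion: one has to keep straight simultaneously the monotonicity direction of $f_h$—and hence of $\kappa_h$—on $\Gamma_h^{\mathrm{out}}$, the road-dependent sign $\pm_h$, and the endpoint convention \eqref{eq:HelpConventrionDerHatEtaBoundKh}, so that the clean cancellation $\hat g_h'=\pm_h f_h^{\max}\,\hat\eta_h'\circ\kappa_h$ translates precisely into the claimed convexity of $\hat\eta_h$; one should also record that, when Definition~\ref{def:MacroMaximumEntropyProb} is to be applied verbatim, the prolongation of $\hat\eta_h$ off $\Gamma_h^{\mathrm{out}}$ must in addition respect \ref{eq:Assumptionsetah2}. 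By contrast the second assertion is essentially bookkeeping once Proposition~\ref{prop:RelationRiemannProblemMaximumEntropy} is available, the only point needing care being the identification of the admissible flux ranges of the two definitions.
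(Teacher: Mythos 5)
Your proof is correct and follows essentially the same route as the paper: differentiate the defining relation to get $\hat\eta_h'=\pm_h\,(f_h^{\max})^{-1}\,\hat g_h'\!\left(f_h(\cdot)/f_h^{\max}\right)$ a.e.\ on $\Gamma_h^{\mathrm{out}}$ (with a convex extension of $\hat\eta_h$ to $K_h$ for the converse), and then reduce both generalized Riemann problems, via the restriction $\tilde\rho_h\in\Gamma_h^{\mathrm{out}}$ and the flux-variable program from the proof of Proposition~\ref{prop:RelationRiemannProblemMaximumEntropy}, to one and the same optimization so that the two objectives coincide. Only cosmetic slips remain: $\kappa_h$ need not be Lipschitz near the maximum of $f_h$ (absolute continuity and a.e.\ differentiability suffice), and the parenthetical convexity of $f_h^{-1}|_{\Gamma_h^{\mathrm{out}}}$ holds only for one orientation of $\Gamma_h^{\mathrm{out}}$ — neither claim is actually used in your argument.
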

\begin{proof}
	The optimization problems in Definition~\ref{def:HoldenRisebro} and Definition~\ref{def:MacroMaximumEntropyProb} differ only in their objective function. As observed in the proof of Proposition~\ref{prop:RelationRiemannProblemMaximumEntropy}, we have $\rho_h(t,0\pm_h)\in \Gamma_h^\mathrm{out}$ for the solution of Definition~\ref{def:MacroMaximumEntropyProb}. By the same arguments, this can be achieved for the solution in Definition~\ref{def:HoldenRisebro} choosing suitable $\tilde\rho_h$ such that $\tilde\rho_h=\rho_h(t,0\pm_h)$. Setting 
	\begin{alignat*}{2}
		\hat g_h(y)&=\pm_h \hat G_h(f_h^{-1}|_{\Gamma_h^\mathrm{out}}(y\cdot f_h^{\max}))	&&\qquad\text{for a.e.~}y\in[0,1]\\
		\iff \quad 
		\hat G_h(\rho)&= \pm_h\hat g_h\!\left(\frac{f_h(\rho)}{f_h^{\max}}\right)&&\qquad\text{for a.e.~}\rho\in \Gamma_h^\mathrm{out}
	\end{alignat*} 
	leads to the following relation between the traces
	\begin{equation*}
		\hat G_h(\rho_h(t,0\pm_h))=\pm_h \hat g_h\!\left(\frac{f_h(\rho_h(t,0\pm_h))}{f_h^{\max}}\right)=\pm_h \hat g_h\!\left(\frac{f_h(\rho_h(t,0))}{f_h^{\max}}\right).
	\end{equation*}
	
	The function $\hat g_h$ is concave if $\hat G_h$ is an entropy flux of a convex $\hat \eta_h$ since
	\begin{alignat*}{2}
		&\hat \eta_h'(\rho)\, f_h'(\rho)=\hat G_h'(\rho)=\pm_h \frac{f_h'(\rho)}{f_h^{\max}} \,\hat g_h'\!\left(\frac{f_h(\rho)}{f_h^{\max}}\right)\qquad &&\text{for a.e.~}\rho\in \Gamma_h^\mathrm{out},\\
		\iff\qquad &\hat \eta_h'(\rho)= \frac{\pm_h 1}{f_h^{\max}} \,\hat g_h'\!\left(\frac{f_h(\rho)}{f_h^{\max}}\right)\qquad &&\text{for a.e.~}\rho\in \Gamma_h^\mathrm{out}.
	\end{alignat*}
	The other direction is obtained by the equation above and by extending $\hat \eta_h$ to a convex function on the domain $K_h$.
\end{proof}

\section{A BGK Model on Networks}
\label{sec:KineticBGKModel}

We introduce a kinetic formulation for scalar conservation laws and a kinetic Bhatnagar--Gross--Krook--type model (BGK model), see e.g.\ \cite{Pe2002} for a reference. We introduce the equilibrium function 
\begin{gather}
	\chi(\rho,\xi)=
	\begin{cases}
		1\quad &\text{if } 0 <\xi<\rho,\\
		-1\quad &\text{if } \rho <\xi<0 ,\\
		0\quad &\text{else},
	\end{cases}	
\end{gather}
on $\mathbb R_\rho\times\mathbb R_\xi$
and the notation
\begin{align}
	D_\xi &=\{g\in\mathbb R\ |\ \sgn (\xi)\, g=|g|\le 1 \}\qquad \text{for }\xi\in\mathbb R,\\
		L^1(N\times O,D_\xi)&=\{g\in L^1(N\times O)\ |\ g(y,\xi)\in D_\xi \text{ for a.e.~}(y,\xi)\in N\times O\}
\end{align}
for measurable sets $N\subset \mathbb R^l$ and $O\subset \mathbb R$.
We recall some well-known results for the equilibrium function $\chi$ where we consider always increasing representatives of $\eta_h'$ if $\eta_h$ is convex.
\begin{lemma}[{\cite[Chapter~2]{Pe2002}}]\label{lemma:BasicConvexityProperties}
	Let $\rho\in \mathbb R$, $\eta_h,f_h\in C^{0,1}(\mathbb R)$ and let $G_{h}$ be such that $G'_{h}=\eta'_h f'_h$. 
	\begin{enumerate}[label=(\roman*)]
		\item We have 
			\begin{alignat*}{2}
				\rho =&\int_{\mathbb R} \chi(\rho,\xi)\d{\xi},\qquad & f_h(\rho) =&\int_{\mathbb R} f'_h( \xi)\; \chi(\rho,\xi)\d{\xi}+f_h(0),\\
				\eta_h(\rho) =&\int_{\mathbb R}\eta'_h(\xi)\; \chi(\rho,\xi)\d{\xi}+\eta_h(0),\qquad & G_{h}(\rho) =&\int_{\mathbb R}\eta'_h(\xi)\; f'_h( \xi)\; \chi(\rho,\xi)\d{\xi}+G_{h}(0).
			\end{alignat*}
		\item For $\eta_h$ convex, $g\in D_\xi$ and $\rho,\xi\in\mathbb R$, the sub-differential inequality
			\begin{equation*}
				\eta'_h(\xi)\, g\ge \eta'_h(\xi)\,\chi(\rho,\xi)+\eta'_h(\rho)\left(g-\chi(\rho,\xi)\right)
			\end{equation*}
			holds. 
		\item For $\eta_h$ convex, $g(\xi)=\chi(\rho,\xi)$ is the minimizer of
			\begin{equation*}
				\int_{\mathbb R}\eta'_h(\xi)\, g(\xi)\d{\xi}
			\end{equation*}
			under the constraints
			\begin{equation*}
				g\in L^1(\mathbb R,D_\xi) \quad \text{and}\quad \int_{\mathbb R}g(\xi)\d{\xi}=\rho.
			\end{equation*}
			The minimizer is unique if $\eta_h$ is strictly convex.
	\end{enumerate}
\end{lemma}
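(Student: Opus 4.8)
The plan is to treat the three parts in the order stated, since each relies on the previous one. For part (i) I would simply unfold the definition of $\chi$. When $\rho>0$ one has $\chi(\rho,\xi)=1$ for $\xi\in(0,\rho)$ and $\chi(\rho,\xi)=0$ otherwise, so $\int_{\mathbb R}\chi(\rho,\xi)\d{\xi}=\int_0^\rho\d{\xi}=\rho$ and, for any $\phi\in C^{0,1}(\mathbb R)$, $\int_{\mathbb R}\phi'(\xi)\,\chi(\rho,\xi)\d{\xi}=\int_0^\rho\phi'(\xi)\d{\xi}=\phi(\rho)-\phi(0)$; when $\rho<0$ the support of $\chi(\rho,\cdot)$ is $(\rho,0)$ with value $-1$, and the same two identities come out after one sign cancellation, while $\rho=0$ is trivial. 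Applying this with $\phi=f_h$, $\phi=\eta_h$ and $\phi=G_h$ (recalling $G_h'=\eta_h'f_h'$) yields all four formulas simultaneously.

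For part (ii) I would rewrite the claimed inequality in the equivalent form $\bigl(\eta_h'(\xi)-\eta_h'(\rho)\bigr)\bigl(g-\chi(\rho,\xi)\bigr)\ge 0$ and verify it by a short case distinction on $\sgn\xi$ and the position of $\rho$ relative to $\xi$. One uses that $\eta_h'$ is non-decreasing (we work with its monotone representative) and that $g\in D_\xi$ forces $g\in[0,1]$ when $\xi>0$, $g\in[-1,0]$ when $\xi<0$, and $g=0$ when $\xi=0$. For instance, if $\xi>0$ and $\rho>\xi$ then $\chi(\rho,\xi)=1$, hence $g-\chi\le 0$, while $\eta_h'(\xi)-\eta_h'(\rho)\le 0$; if $\xi>0$ and $\rho\le\xi$ then $\chi(\rho,\xi)=0$, hence $g-\chi\ge 0$, while $\eta_h'(\xi)-\eta_h'(\rho)\ge 0$; the two subcases $\xi<0$ are symmetric and $\xi=0$ is trivial. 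In every case the product is nonnegative.

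For part (iii) I would first check that $\chi(\rho,\cdot)$ is admissible: it is bounded with compact support, hence in $L^1$, it satisfies $\chi(\rho,\xi)\in D_\xi$ for every $\xi$, and $\int_{\mathbb R}\chi(\rho,\xi)\d{\xi}=\rho$ by part (i). Given any admissible $g$, integrate the sub-differential inequality of part (ii) over $\xi\in\mathbb R$; the term $\eta_h'(\rho)\int_{\mathbb R}\bigl(g(\xi)-\chi(\rho,\xi)\bigr)\d{\xi}$ vanishes because $g$ and $\chi(\rho,\cdot)$ both integrate to $\rho$, leaving $\int_{\mathbb R}\eta_h'(\xi)\,g(\xi)\d{\xi}\ge\int_{\mathbb R}\eta_h'(\xi)\,\chi(\rho,\xi)\d{\xi}$, which is exactly minimality of $\chi(\rho,\cdot)$. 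Under strict convexity, equality in this chain forces $\bigl(\eta_h'(\xi)-\eta_h'(\rho)\bigr)\bigl(g(\xi)-\chi(\rho,\xi)\bigr)=0$ for a.e.\ $\xi$; since a strictly convex $\eta_h$ has strictly increasing derivative, the set $\{\xi:\eta_h'(\xi)=\eta_h'(\rho)\}$ reduces to the point $\rho$ and therefore $g=\chi(\rho,\cdot)$ almost everywhere.

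None of these steps presents a genuine obstacle — the statement is classical (Perthame) — so the only thing that needs care is bookkeeping: keeping the sign conventions in the definitions of $D_\xi$ and of $\chi$ consistent throughout the case analysis of part (ii), and, in the uniqueness argument of part (iii), insisting on the monotone representative of $\eta_h'$ so that "strictly increasing" literally means that $\{\xi:\eta_h'(\xi)=\eta_h'(\rho)\}$ is a Lebesgue null set.
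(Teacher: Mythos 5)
Your proposal is correct; the paper gives no proof of this lemma, citing it from Perthame's book, and your argument (direct computation of the moments of $\chi$, the pointwise case check showing $(\eta_h'(\xi)-\eta_h'(\rho))(g-\chi(\rho,\xi))\ge 0$ using the monotone representative of $\eta_h'$ and the sign constraints from $D_\xi$, then integration against the mass constraint for minimality and the null-level-set argument for uniqueness) is precisely the standard one found in that reference. No gaps: the Lipschitz hypotheses justify the fundamental theorem of calculus in part (i), and your insistence on the monotone representative handles the equality case in part (iii) correctly.
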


We recall the kinetic formulation for scalar conservation laws introduced in \cite{LPT1994scalar} which is equivalent to the notion of weak entropy solutions \cite{Pe2002}. $\rho_h$ is called a solution to the kinetic formulation of \eqref{eq:LWRmodel} if there exists a non-negative bounded measure $m_h(t,x,\xi)$, $m_h\in C_0( K_h,\text{weak}-M^1([0,\infty)_t\times \Omega_h))$ such that 
\begin{equation}\label{eq:KineticModel}
	\partial_t \chi(\rho_h(t,x),\xi)+f'_h(\xi)\,\partial_x\chi(\rho_h(t,x),\xi)=\partial_\xi m_h(t,x,\xi) \qquad t>0,x\in\Omega_h,\xi\in K_h,
\end{equation}
holds in the distributional sense.

\subsection{Definition of the BGK model}
Motivated by the kinetic formulation we aim to approximate $\chi(\rho_h,\xi)$ by the BGK model. We consider solutions $g_h\in L^1((0,\infty)_t\times\Omega_h\times K_h,D_\xi)$ to 
\begin{equation}\label{eq:KineticBGKModel}
	\partial_t g_h(t,x,\xi)+f'_h(\xi)\,\partial_x g_h(t,x,\xi)=\frac{\chi(\rho_{g_h}(t,x),\xi)-g_h(t,x,\xi)}{\epsilon}\qquad t>0,x\in \Omega_h,\xi\in K_h,
\end{equation}
with 
\begin{equation}\label{eq:KineticBGKModel2}
	\rho_{g_h}(t,x)=\int_{ K_h}g_h(t,x,\xi)\d{\xi},
\end{equation}
and the initial condition
\begin{equation}
	{g_h}(0,x,\xi)=\chi(\rho_{0,h}(x),\xi)\qquad \text{for a.e.~}x\in\Omega_h,\xi\in K_h,
\end{equation}
with $\rho_{0,h}\in L^1(\Omega_h,K_h)$. It remains to introduce a suitable coupling condition at the junction.
Notice that for fixed $\xi\in K_h$ the left--hand--side of \eqref{eq:KineticBGKModel} is a linear transport with characteristics $(1,f'_h(\xi))$ in the $x$-$t$-plane. 
To define a kinetic coupling condition we prescribe data for the outgoing characteristics $ g_{h}(t,0,\xi)$ with $\xi\in \Gamma_h^\mathrm{out}$ depending on the data with incoming characteristics $g_{h}(t,0,\xi)$ with $ \xi\in \Gamma_h^\mathrm{in}$.
The approach can be formalized by a kinetic coupling function
\begin{equation}\label{eq:KineticCouplingCondition}
	\Psi\colon \bigtimes_{h=1}^{n+m} L^1(\Gamma_h^{\mathrm{in}},D_\xi)\to \bigtimes_{h=1}^{n+m} L^1(\Gamma_h^{\mathrm{out}},D_\xi)
\end{equation}
and the coupling condition
\begin{equation}\label{eq:DefinitionCoupling}
	\Psi_h[g(t,0,\Gamma^{\mathrm{in}})](\xi):=\Psi_h[g_k(t,0,\Gamma_k^{\mathrm{in}}),1\le k\le n+m](\xi)=g_h(t,0,\xi)\quad \text{for a.e. }\xi\in\Gamma_h^{\mathrm{out}},
\end{equation}
for $h=1,\dots,n+m$. 

\subsection{Selection of kinetic coupling conditions}
We select the mapping $\Psi$ as a solution to a maximum entropy dissipation problem. Similar to the macroscopic case we may assume that the total kinetic mass is conserved in time
\begin{equation}\label{eq:MassConservationKineticHelp}
	\sum_{h=1}^{n+m}\iint_{\Omega_h\times K_h}g_h(T,x,\xi)\d{x}\d{\xi}=\sum_{h=1}^{n+m}\iint_{\Omega_h\times K_h}\chi(\rho_{0,h}(x),\xi)\d{x}\d{\xi}\qquad \text{for every }T>0,
\end{equation}
for any $\rho_{0,h}\in L^1(\Omega_h,K_h)$, see also \eqref{eq:MassConservationJunction}.
As in the macroscopic setting, we assume that the total kinetic entropy w.r.t. some $(\eta_1,\dots,\eta_{n+m})$ decreases in time. We obtain
\begin{align}
	\sum_{h=1}^{n+m}\iint_{\Omega_h\times K_h}\eta_h'(\xi)\,g_h(T_2,x,\xi)\d\xi+\eta_h(0)\d x 
	\le \sum_{h=1}^{n+m}\iint_{\Omega_h\times K_h}\eta_h'(\xi)\,g_h(T_1,x,\xi)\d\xi+\eta_h(0) \d{x}
\end{align}
for every $T_2\ge T_1\ge 0$, for $\rho_{0,h}\in L^1(\Omega_h,K_h)$ and for suitable entropy pairs $(\eta_h,G_h)$ satisfying \eqref{eq:constraintEntropiesintheEnergyDecayOnTheNetwork}.
Integration by parts applied to the entropy inequality \eqref{eq:WeakEntropySolution} leads to 
\begin{align}\label{eq:EntropyDissipationJunctionKinetic}
\begin{split}
	&\sum_{i=1}^{n}\left(\int_{K_i} \eta_i'(\xi)\,f_i'(\xi)\, g_i(t,0,\xi)\d \xi + G_{i}(0)\right) \\
	&- \sum_{j=n+1}^{n+m}\left(\int_{K_j}  \eta_j'(\xi)\,f_j'(\xi)\, g_j(t,0,\xi)\d \xi +  G_{j}(0)\right) \ge 0\qquad\text{for a.e.~}t>0,
\end{split}
\end{align}
for the entropy fluxes of $\eta_h$ satisfying \eqref{eq:constraintEntropiesintheEnergyDecayOnTheNetwork}, see also \eqref{eq:EntropyDecayJunctionMacro}.
As in \cite{HHW2020}, we apply a maximum entropy dissipation problem to select a kinetic coupling function $\Psi$ based on \eqref{eq:DefinitionCoupling}, \eqref{eq:MassConservationKineticHelp} and \eqref{eq:EntropyDissipationJunctionKinetic}. Let $\hat\eta_h$ be convex entropies and let $g_h(\xi)$, $\xi\in \Gamma_h^\mathrm{in}$ be given data. We select $g_h(\xi)\in D_\xi$, $\xi\in \Gamma_h^\mathrm{out}$ by maximizing \eqref{eq:EntropyDissipationJunctionKinetic} under the constraint \eqref{eq:MassConservationKineticHelp}:
\begin{alignat}{2}\label{eq:OptimizationProblemWithoutConstraints}
	&\underset{ g_h(\xi),\ \xi\in\Gamma_h^\mathrm{out}}{\max} &\quad & \sum_{i=1}^{n}\int_{\Gamma_i^\mathrm{out}} \hat\eta_i'(\xi)\,f_i'(\xi)\, g_i(\xi)\d \xi - \sum_{j=n+1}^{n+m}\int_{\Gamma_j^\mathrm{out}} \hat\eta_j'(\xi)\,f_j'(\xi)\, g_j(\xi)\d \xi \\
	&\qquad\text{s.t.}&& \sum_{i=1}^n \int_{ K_i}f'_i(\xi)\, g_i(\xi)\d{\xi}=\sum_{j=n+1}^{n+m} \int_{ K_j}f'_j(\xi)\, g_j(\xi)\d{\xi},\nonumber\\
	&&& g_h(\xi)\in D_\xi \qquad \text{for a.e. }\xi\in\Gamma_h^\mathrm{out}.\nonumber
\end{alignat}
Then, the mapping $\Psi$ is defined by $\Psi_h[g(t,0,\Gamma^\mathrm{in})](\xi)=g_h(\xi)$, $\xi\in \Gamma_h^\mathrm{out}$ where $g_h(\xi)$ is a solution to \eqref{eq:OptimizationProblemWithoutConstraints}.
\begin{proposition}\label{prop:SolutionOptimizationWithout}
	Let \ref{eq:Assumptionsfh1}--\ref{eq:Assumptionsfh3} and \ref{eq:Assumptionsetah2} hold. Let $g_h\in L^1(\Gamma_h^\mathrm{in},D_\xi)$.
	\begin{enumerate}[label=(\roman*)]
	\item  There exists a solution to \eqref{eq:OptimizationProblemWithoutConstraints} with
		\begin{equation*}
			g_h(\xi)=\chi(\hat\rho_h,\xi) \qquad \text{for } \xi\in \Gamma_h^\mathrm{out}\quad \text{with}\quad \hat\eta'_h(\hat\rho_h-)\le M\le\hat \eta'_h(\hat\rho_h+)
		\end{equation*}
		for some $M\in \mathbb R$.  
	\item The solution is unique if $\hat\eta_h$ are strictly convex for all $h=1,\dots,n+m$. 
	\item If $\hat\eta_h$ are not strictly convex, there may exist multiple solutions. They coincide almost everywhere except of $\xi\in\Gamma_h^\mathrm{out}$, $\hat\eta'_h(\xi)= M$. 
	\end{enumerate}
\end{proposition}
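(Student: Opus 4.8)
The plan is to treat \eqref{eq:OptimizationProblemWithoutConstraints} as an infinite--dimensional linear program: the objective is linear in $g=(g_1,\dots,g_{n+m})$, there is a single scalar affine constraint (the mass balance \eqref{eq:MassConservationKineticHelp}), and the remaining constraints confine $g_h(\xi)$ to the compact interval $D_\xi$ pointwise in $\xi$. Existence of a maximizer I would obtain by the direct method: the admissible set is convex, bounded in $\bigtimes_h L^\infty(\Gamma_h^\mathrm{out})$ because $|D_\xi|\le 1$ and $\Gamma_h^\mathrm{out}\subseteq K_h$ is bounded, and weak--$*$ closed (the pointwise constraint $\sgn(\xi)g_h(\xi)\in[0,1]$ survives weak--$*$ limits after testing against nonnegative $L^1$ functions); the mass--balance functional is weak--$*$ continuous since $f_h'\in L^\infty(K_h)\subseteq L^1(K_h)$. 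Hence the admissible set is weak--$*$ compact and a maximizer exists. Non--emptiness of the admissible set will fall out of the explicit construction below.

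For the explicit form in (i) I would use Lagrangian duality, following the analogue for isentropic gas dynamics in \cite{HHW2020}. Introduce a multiplier $M\in\mathbb R$ for the mass--balance constraint and drop the additive constant that involves only the prescribed data $g_h|_{\Gamma_h^\mathrm{in}}$; the Lagrangian then decouples over the roads $h$ and over $\xi$, leaving a family of scalar problems $\max_{g_h(\xi)\in D_\xi}\pm_h\bigl(\hat\eta_h'(\xi)-M\bigr)f_h'(\xi)\,g_h(\xi)$. Since $\hat\eta_h'$ is increasing and $f_h'$ has a fixed sign on $\Gamma_h^\mathrm{out}$ (this is exactly where \ref{eq:Assumptionsetah2} is used), each such problem is of the type in Lemma~\ref{lemma:BasicConvexityProperties}, and with the convention \eqref{eq:HelpConventrionDerHatEtaBoundKh} its extremal solution can be written as $g_h(\xi)=\chi(\hat\rho_h,\xi)$ on $\Gamma_h^\mathrm{out}$ for any $\hat\rho_h$ with $\hat\eta_h'(\hat\rho_h-)\le M\le\hat\eta_h'(\hat\rho_h+)$ --- the same characterization that governs the right states in Definition~\ref{def:RiemannSolver}. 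It then remains to fix $M$ so that $g^M:=(\chi(\hat\rho_h(M),\cdot))_h$ is feasible. Writing $\Xi(M):=\sum_{i=1}^n\int_{K_i}f_i'\,g_i^M\d\xi-\sum_{j=n+1}^{n+m}\int_{K_j}f_j'\,g_j^M\d\xi$ (with the $\Gamma_h^\mathrm{in}$ parts entering as fixed data), the monotonicity relations \eqref{eq:HelpGodunovFluxJunction1}--\eqref{eq:HelpGodunovFluxJunction2} show that $\Xi$ is monotone, of bounded variation, and changes sign as $M$ runs over $\mathbb R$; an intermediate--value argument, exactly as in the proof of Proposition~\ref{Prop:ExistenceRiemannProblem}, then produces an $M$ --- together with an admissible choice of the $g_h$ inside the level sets $\{\hat\eta_h'=M\}$ should $\Xi$ jump across $0$ --- with $\Xi(M)=0$. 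Since $g^M$ is then feasible and maximizes the Lagrangian, the Lagrangian sufficiency principle (a feasible maximizer of the Lagrangian solves the constrained problem, no convexity needed) gives that $g^M$ solves \eqref{eq:OptimizationProblemWithoutConstraints}. This proves (i).

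For (ii), if every $\hat\eta_h$ is strictly convex then $\hat\eta_h'$ is strictly increasing, so $\{\xi\in\Gamma_h^\mathrm{out}:\hat\eta_h'(\xi)=M\}$ is Lebesgue--null and the pointwise maximizer above is unique a.e.; any optimal $g$ must satisfy the associated KKT relations for some multiplier, hence coincides with $\chi(\hat\rho_h(M),\cdot)$ a.e. on $\Gamma_h^\mathrm{out}$. Even when $M$ is not unique (several multipliers may yield the same $\hat\rho_h$, e.g.\ across a jump of $\hat\eta_h'$), the restriction $\chi(\hat\rho_h(M),\cdot)|_{\Gamma_h^\mathrm{out}}$, and hence the maximizer, does not change. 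For (iii), if some $\hat\eta_h$ is not strictly convex, then $\hat\eta_h'$ is constant on a nondegenerate interval; choosing $M$ equal to that constant, the pointwise coefficient $\pm_h(\hat\eta_h'(\xi)-M)f_h'(\xi)$ vanishes precisely on $\{\xi\in\Gamma_h^\mathrm{out}:\hat\eta_h'(\xi)=M\}$, so on this set every $D_\xi$--valued function is pointwise optimal and the only freedom compatible with the mass balance sits there, while off this set the maximizer is still uniquely $\chi(\hat\rho_h,\cdot)$. Hence two solutions can differ at most on $\{\xi\in\Gamma_h^\mathrm{out}:\hat\eta_h'(\xi)=M\}$.

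The step I expect to be the main obstacle is matching the scalar multiplier $M$ to the mass balance when the $\hat\eta_h$ are only convex: this forces one to select simultaneously the values of the $g_h$ inside the level sets $\{\hat\eta_h'=M\}$ (the kinetic counterpart of the priority functions $\Pi^M$ in the macroscopic problem), and it goes hand in hand with the careful sign bookkeeping --- incoming versus outgoing roads, the sign of $f_h'$ on $\Gamma_h^\mathrm{out}$, the convention \eqref{eq:HelpConventrionDerHatEtaBoundKh} --- needed to recognise the extremal pointwise solutions as the equilibrium functions $\chi(\hat\rho_h,\cdot)$. The existence part is soft; the real content of the proposition is this explicit structure.
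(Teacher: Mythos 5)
Your argument is correct and is essentially the paper's proof in Lagrangian language: the pointwise maximization of the Lagrangian is exactly the sub-differential inequality of Lemma~\ref{lemma:BasicConvexityProperties} as used in \eqref{eq:PropMinimizerInequality}, the intermediate-value selection of $M$ (with a further selection of equilibrium states inside the level sets $\{\hat\eta_h'=M\}$ when the flux balance jumps) is the paper's Step~2, and your level-set discussion of (ii)--(iii) matches Step~4, while your weak--$*$ compactness existence step is redundant once the maximizer is constructed explicitly. Two small corrections: the fixed sign of $f_h'$ on $\Gamma_h^\mathrm{out}$ follows from the concavity of $f_h$ and the definition of $\Gamma_h^\mathrm{out}$, not from \ref{eq:Assumptionsetah2}, and in (ii) you need not invoke KKT necessity --- since feasible points have zero duality gap, any constrained maximizer attains the same value as $g^M$ and hence maximizes the Lagrangian, which forces coincidence with $\chi(\hat\rho_h,\cdot)$ a.e.\ off $\{\hat\eta_h'=M\}$, exactly as in the paper's equality-case argument.
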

\begin{proof}~
	
	\noindent
	\underline{Step 1:} We rewrite the optimization problem \eqref{eq:OptimizationProblemWithoutConstraints} as
	\begin{alignat}{2} \label{eq:PropMinimizerHelpOptimizationProblem0}
		\min & \quad & & -\sum_{h=1}^{n+m} \int_{\Gamma_h^\mathrm{out}} \hat\eta'_h(\xi) \,|f_h'(\xi)|\,g_h(\xi)\d{\xi} \\
		\text{s.t.}&&& \sum_{h=1}^{n+m} \int_{\Gamma_h^\mathrm{out}}  |f_h'(\xi)|\,g_h(\xi)\d{\xi} =\sum_{h=1}^{n+m} \int_{\Gamma_h^\mathrm{in}} |f'_h(\xi)|\, g_h(\xi)\d{\xi}\label{eq:PropMinimizerHelpOptimizationProblem1}\\
		&&&  g_h(\xi)\in D_\xi\qquad \text{for a.e. }\xi\in K_h \label{eq:PropMinimizerHelpOptimizationProblem2}
	\end{alignat}
	and define
	\begin{align*}
		&g_h^M(\xi)=\chi(\rho_h^M,\xi)
			\qquad \text{for } \xi\in\Gamma_h^\mathrm{out}\quad\text{with}\quad \hat\eta'_h(\rho_h^M-)\le M\le \hat\eta'_h(\rho_h^M+).
	\end{align*}  
	For $M\in\mathbb R $ with $\mathcal L(\{\hat\eta'_h=M\})>0$ for some $h=1,\dots,n+m$, we have some freedom in the definition of $\rho_h^M(\xi)$ with $\hat\eta'_h(\rho_h^M)\le M\le \hat\eta'_h(\rho_h^M+)$. Here $\mathcal L$ denotes the one-dimensional Lebesque measure. Furthermore, notice that the left--hand--side of \eqref{eq:PropMinimizerHelpOptimizationProblem1} with $g_h=g_h^M$ is increasing w.r.t.~$M\in\mathbb R$ and it has discontinuities only if $\mathcal L(\{\hat\eta'_h=M\})>0$ for some $h=1,\dots,n+m$. 
	
	\noindent
	\underline{Step 2:}
	We aim to select $M\in \mathbb R$ and $\rho_h^M$ such that \eqref{eq:PropMinimizerHelpOptimizationProblem1} holds with $g_h=g_h^M$ on $\Gamma_h^\mathrm{out}$.
	Notice that
	\begin{align*}
		\sum_{h=1}^{n+m} \int_{\Gamma_h^\mathrm{out}} |f_h'(\xi)|\, g_h^M(\xi)\d{\xi}-\sum_{h=1}^{n+m} \int_{\Gamma_h^\mathrm{in}} |f'_h(\xi)|\, g_h(\xi)\d{\xi}\quad 
		\begin{cases}
\le 0\quad &	\text{for sufficiently small $M$},\\
\ge 0\quad &	\text{for sufficiently large $M$}.
\end{cases}
	\end{align*}
	We apply the intermediate value theorem to the left--hand--side of \eqref{eq:PropMinimizerHelpOptimizationProblem1} as a function of $M$. Notice that the function is monotone and of bounded variation (not necessarily continuous). We obtain an $M\in \mathbb R$ such that
	\begin{equation*}
		\lim_{\tilde M\to M-}\sum_{h=1}^{n+m} \int_{\Gamma_h^\mathrm{out}} |f_h'(\xi)|\, g_h^{\tilde M}(\xi)\d{\xi}\le \sum_{h=1}^{n+m} \int_{\Gamma_h^\mathrm{in}} |f'_h(\xi)|\, g_h(\xi)\d{\xi}\le \lim_{\tilde M\to M+}\sum_{h=1}^{n+m} \int_{\Gamma_h^\mathrm{out}} |f_h'(\xi)|\, g_h^{\tilde M}(\xi)\d{\xi}.
	\end{equation*}
	We fix this $M\in \mathbb R$.
	By Beppo Levi's lemma, $\lim_{\tilde M\to M\pm}g_h^{\tilde M}(\xi)$ are monotone sequences converging in $L^1(\mathbb R_\xi)$ to an equilibrium function $\chi(\rho_h^{M\pm},\xi)$ with $\hat\eta_h(\rho_h^{M\pm}-)\le M\le \hat\eta_h(\rho_h^{M\pm}+)$. We apply the intermediate value theorem again to the left--hand--side of \eqref{eq:PropMinimizerHelpOptimizationProblem1} as a function of $\rho_h^M\in [\rho_h^{M-},\rho_h^{M+}]$. This leads to the existence of $\rho_h^M$ such that \eqref{eq:PropMinimizerHelpOptimizationProblem1} holds.

	\noindent
	\underline{Step 3:} For every $g_h$ and $g_h^M$ with $\rho_h^M$ such that \eqref{eq:PropMinimizerHelpOptimizationProblem1}--\eqref{eq:PropMinimizerHelpOptimizationProblem2} are satisfied, we obtain by the sub-differential inequality in Lemma~\ref{lemma:BasicConvexityProperties} that
	\begin{align}
		\begin{split}\label{eq:PropMinimizerInequality}
			&\sum_{h=1}^{n+m} \int_{\Gamma_h^\mathrm{out}} \hat\eta'_h(\xi)\, |f_h'(\xi)|\,g_h(\xi)\d{\xi} \\
			&\ge \sum_{h=1}^{n+m} \left(\int_{\Gamma_h^\mathrm{out}} \hat\eta'_h(\xi)\, |f'_h(\xi)|\,\chi(\rho_h^M,\xi)\d{\xi} + M \int_{\Gamma_h^\mathrm{out}} |f'_h(\xi)|\left( g_h(\xi)-\chi(\rho_h^M,\xi)\right)\d{\xi} \right) \\
			&= \sum_{h=1}^{n+m} \left(\int_{\Gamma_h^\mathrm{out}} \hat\eta'_h(\xi)\, |f_h'(\xi)|\,g_h^M(\xi)\d{\xi} + M \int_{\Gamma_h^\mathrm{out}}   |f_h'(\xi)|\,(g_h(\xi)-\chi(\rho_h^M,\xi))\d{\xi} \right) \\
			& = \sum_{h=1}^{n+m} \int_{\Gamma_h^\mathrm{out}} \hat\eta'_h(\xi)\, |f_h'(\xi)|\,g_h^M(\xi)\d{\xi}.
		\end{split}
	\end{align}
	Therefore, $g_h^M$ is a solution to \eqref{eq:PropMinimizerHelpOptimizationProblem0}--\eqref{eq:PropMinimizerHelpOptimizationProblem2}. The assertion \textit{(i)} follows.

	\noindent
	\underline{Step 4:} Observe that the inequality in \eqref{eq:PropMinimizerInequality} is an equality if and only if $g_h(\xi)= g_h^M(\xi)$ for a.e.~$\xi\in\Gamma_h^\mathrm{out}\cap\{\eta_h'(\xi)\neq M\}$. Otherwise the inequality is strict. On the other hand the objective function \eqref{eq:PropMinimizerHelpOptimizationProblem0} dependents only on $g_h(\xi)$ with $\xi\notin\Gamma_h^\mathrm{out}\cap\{\eta_h'(\xi)= M\}$ as long as the constraints \eqref{eq:PropMinimizerHelpOptimizationProblem1}--\eqref{eq:PropMinimizerHelpOptimizationProblem2} are satisfied.  Hence, the assertions \textit{(ii)--(iii)} follow.
\end{proof}

To handle the non-uniqueness in the case where $\hat\eta_h$ are not strictly convex, we use the functions $\Pi^M_h$ introduced in \ref{eq:DefPiM} again. We define the following coupling condition solving \eqref{eq:OptimizationProblemWithoutConstraints}.

\begin{definition}\label{def:KineticCouplingCondition}
	We define the coupling function
	\begin{equation*}
		\Psi\colon \bigtimes_{h=1}^{n+m}L^1(\Gamma_h^\mathrm{in},D_\xi)\to \bigtimes_{h=1}^{n+m}L^1(\Gamma_h^\mathrm{out},D_\xi);\quad g\mapsto \Big(\Gamma_h^\mathrm{out}\ni \xi\mapsto \chi(\hat\rho_h,\xi)\Big)_{h=1}^{n+m},
	\end{equation*}
	for $\hat\rho_h$ such that 
	\begin{align*}
		\hat\eta'_h(\hat\rho_h-)\le M \le \hat\eta'_h(\hat\rho_h+)\quad\text{and}\quad \hat\rho_h=\Pi^M_h(z)\quad\text{for } M\in\mathbb R, z\in[0,1],
	\end{align*}
	and 
	\begin{align}\label{eq:MassConservationKineticJunction}
	\begin{split}
		&\sum_{i=1}^{n}\left( \int_{\Gamma_i^{\mathrm{out}}}f'_i(\xi)\, \Psi_i[g(\Gamma^{\mathrm{in}})](\xi)\d{\xi}+\int_{\Gamma_i^{\mathrm{in}}}f'_i(\xi)\, g_i(\xi)\d{\xi}\right)\\
		&=\sum_{j=n+1}^{n+m}\left( \int_{\Gamma_j^{\mathrm{out}}}f'_j(\xi)\, \Psi_j[g(\Gamma^{\mathrm{in}})](\xi)\d{\xi}+\int_{\Gamma_j^{\mathrm{in}}}f'_j(\xi)\, g_j(\xi)\d{\xi}\right)
	\end{split}
	\end{align}
	hold. 
\end{definition}
The existence of $\Psi$ as in Definition~\ref{def:KineticCouplingCondition} is obtained by the same arguments as in Proposition~\ref{prop:SolutionOptimizationWithout} by setting $\hat\rho_h=\Pi^M_h(z)$, $z\in [0,1]$ if $\mathcal L(\{\hat\eta'_h=M\})>0$ for some $h=1,\dots,n+m$ in Step 2.

\subsection{A kinetic \texorpdfstring{$\bf{L^1}$}{L1}--contraction property}

We prove an $L ^ 1$--contraction property of the kinetic coupling condition $\Psi$.

\begin{proposition}\label{prop:EntropyInequalityJunctionWithout}
	Let \ref{eq:Assumptionsfh1}--\ref{eq:Assumptionsfh3}, \ref{eq:Assumptionsetah1} and \ref{eq:DefPiM} hold. Let $\Psi$ be as in Definition~\ref{def:KineticCouplingCondition}.
	Then, for $g_{h}^s\in L^1(K_h,D_\xi)$ with $g_{h}^s(\xi)=\Psi_h[g^s (\Gamma^\mathrm{in})](\xi)$, $\xi\in  \Gamma_h^\mathrm{out}$, $s=1,2$:
	\begin{equation*}
		\sum_{i=1}^n \int_{ K_i}f'_i(\xi)\, |g_i^1(\xi)-g_i^2(\xi)|\d{\xi}-\sum_{j=n+1}^{n+m}\int_{ K_j}f'_j(\xi)\, |g_j^1(\xi)-g_j^2(\xi)|\d{\xi}\ge 0.
	\end{equation*}
\end{proposition}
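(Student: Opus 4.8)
The plan is to mimic the structure of the macroscopic $L^1$-contraction argument in Proposition~\ref{prop:ContractionPropertyJunction}, but at the kinetic level, exploiting the explicit form $g_h^s(\xi)=\chi(\hat\rho_h^s,\xi)$ on $\Gamma_h^\mathrm{out}$ together with the mass-conservation constraint \eqref{eq:MassConservationKineticJunction}. Let $M^s,z^s$ be the parameters from Definition~\ref{def:KineticCouplingCondition} associated to the data $g^s$, so that on $\Gamma_h^\mathrm{out}$ we have $g_h^s=\chi(\hat\rho_h^s,\xi)$ with $\hat\rho_h^s=\Pi^{M^s}_h(z^s)$. As in Proposition~\ref{prop:ContractionPropertyJunction}, after reordering indices and possibly swapping the roles of incoming/outgoing roads, I may assume $\hat\rho_1^1-\hat\rho_1^2\neq 0$ whenever some $\hat\rho_h^1-\hat\rho_h^2\neq 0$, and set $\theta:=\sgn(\hat\rho_1^1-\hat\rho_1^2)$; monotonicity of $\Pi^M_h(z)$ in $M$ and $z$ then forces $\sgn(\hat\rho_h^1-\hat\rho_h^2)=\theta$ for all $h$ (with $\sgn(0)=\theta$).

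First I would rewrite the target quantity by inserting the constraint. Using $\sum_h\pm_h\int_{K_h}f'_h(\xi)\,g_h^s(\xi)\d\xi=0$ for $s=1,2$ (this is exactly \eqref{eq:MassConservationKineticJunction} with the sign convention $\pm_h$), subtract $\theta$ times the difference of the two constraints from the left-hand side, obtaining
\begin{equation*}
	\sum_{h=1}^{n+m}\pm_h\int_{K_h}f'_h(\xi)\Big(|g_h^1(\xi)-g_h^2(\xi)|-\theta\,(g_h^1(\xi)-g_h^2(\xi))\Big)\d\xi.
\end{equation*}
Now split each integral over $\Gamma_h^\mathrm{in}$ and $\Gamma_h^\mathrm{out}$. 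On $\Gamma_h^\mathrm{out}$ both $g_h^s=\chi(\hat\rho_h^s,\xi)$ are equilibrium functions with $\hat\rho_h^1-\hat\rho_h^2$ of sign $\theta$; for $\xi$ between $0$ and the $\hat\rho_h^s$ the integrand $\chi(\hat\rho_h^1,\xi)-\chi(\hat\rho_h^2,\xi)$ has sign $\theta\,\sgn(\xi)$, so $|g_h^1-g_h^2|-\theta(g_h^1-g_h^2)$ vanishes precisely on the sign pattern that makes the $\Gamma_h^\mathrm{out}$-contribution non-negative (recall $f'_h(\xi)$ has a fixed sign — $>0$ on $\Gamma_i^\mathrm{out}$, $<0$ on $\Gamma_j^\mathrm{out}$ — matching the outer $\pm_h$). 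Thus the $\Gamma_h^\mathrm{out}$-terms contribute $\ge 0$ road by road. On $\Gamma_h^\mathrm{in}$ the quantity $|g_h^1-g_h^2|-\theta(g_h^1-g_h^2)$ is pointwise $\ge 0$ but is multiplied by $\pm_h f'_h(\xi)$, which on $\Gamma_h^\mathrm{in}$ has the \emph{unfavourable} sign; these are the terms one must control.

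The main obstacle is therefore exactly the analogue of Steps~2--3 in Proposition~\ref{prop:ContractionPropertyJunction}: showing the bad $\Gamma_h^\mathrm{in}$-contributions vanish. I expect to argue that on the set where the integrand is nonzero we must have $\sgn(g_h^1(\xi)-g_h^2(\xi))\neq\theta$ for $\xi\in\Gamma_h^\mathrm{in}$, and then derive a contradiction with the construction of $\Psi$ via the same three-estimate chain as in the macroscopic case — comparing the flux $\int_{K_h}f'_h(\xi)g_h^s(\xi)\d\xi$ through road $h$ under the two data and using that the Godunov-type monotonicity forces an impossible strict inequality $f_h(\text{something})\le\cdots<\cdots\le f_h(\text{same thing})$. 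Concretely, because $g_h^s(\xi)=\chi(\hat\rho_h^s,\xi)$ on all of $\Gamma_h^\mathrm{out}$, the effective traces $\hat\rho_h^s$ play the role of the $R_h^s$ in the macroscopic proof; the constraint and monotonicity then pin the fluxes and close the contradiction. Once the $\Gamma_h^\mathrm{in}$-terms are shown to vanish, summing the non-negative $\Gamma_h^\mathrm{out}$-contributions yields the claimed inequality. I would also note the trivial reduction: if $\hat\rho_h^1=\hat\rho_h^2$ for all $h$ there is nothing to prove since then $g_h^1=g_h^2$ on each $\Gamma_h^\mathrm{out}$ and on $\Gamma_h^\mathrm{in}$ one has $g^1=g^2$ by hypothesis, so the whole expression vanishes.
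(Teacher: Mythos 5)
Your opening moves are the right ones and coincide with the paper's: the common sign $\theta$ (the paper's $\mu$) of $\hat\rho_h^1-\hat\rho_h^2$ across all roads, obtained from the monotonicity of $\hat\eta_h'$ and $\Pi_h^M$, and the insertion of $\theta$ times the difference of the two mass balances. From there, however, the proposal goes astray in two places. First, a pointwise fact: since $\chi(\cdot,\xi)$ is nondecreasing in the density, $\chi(\hat\rho_h^1,\xi)-\chi(\hat\rho_h^2,\xi)=\theta\,\mathbf{1}_{(\min\{\hat\rho_h^1,\hat\rho_h^2\},\max\{\hat\rho_h^1,\hat\rho_h^2\})}(\xi)$, so its sign is $\theta$ wherever it is nonzero, not $\theta\,\sgn(\xi)$; consequently $|g_h^1-g_h^2|-\theta\,(g_h^1-g_h^2)$ vanishes \emph{identically} on $\Gamma_h^\mathrm{out}$, rather than being non-negative only for a favourable sign pattern. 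Second, and decisively, your plan to treat the $\Gamma_h^\mathrm{in}$--terms as the ``bad'' ones and to kill them by an analogue of Steps~2--3 of Proposition~\ref{prop:ContractionPropertyJunction} cannot be carried out: the incoming data $g_h^s|_{\Gamma_h^\mathrm{in}}$ are arbitrary elements of $L^1(\Gamma_h^\mathrm{in},D_\xi)$, constrained by the coupling only through the single scalar balance \eqref{eq:MassConservationKineticJunction}, so no pointwise sign information on $g_h^1-g_h^2$ on $\Gamma_h^\mathrm{in}$ can be extracted from the construction of $\Psi$ (which only determines the outgoing data); there is no contradiction to derive, and these integrals do not vanish in general. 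The macroscopic contradiction works precisely because there \emph{all} traces are outputs of the Riemann solver; that structure is absent here.

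The resolution is that no such step is needed: your sign bookkeeping for $\pm_h f_h'$ on $\Gamma_h^\mathrm{in}$ is inverted relative to the junction decomposition actually used, cf.\ \eqref{eq:LemmaKruhzkovDissipationHelp1}, by which the quantity in question equals $\sum_{h}\bigl(\int_{\Gamma_h^\mathrm{in}}|f_h'|\,|g_h^1-g_h^2|\d\xi-\int_{\Gamma_h^\mathrm{out}}|f_h'|\,|g_h^1-g_h^2|\d\xi\bigr)$, i.e.\ the incoming sets carry the favourable factor $+|f_h'|$. Replacing $|g_h^1-g_h^2|$ by $\theta\,(g_h^1-g_h^2)$ on $\Gamma_h^\mathrm{out}$ (equilibrium structure) and using \eqref{eq:LemmaKruhzkovDissipationHelp2} to trade $\sum_h\int_{\Gamma_h^\mathrm{out}}|f_h'|(g_h^1-g_h^2)\d\xi$ for $\sum_h\int_{\Gamma_h^\mathrm{in}}|f_h'|(g_h^1-g_h^2)\d\xi$, the whole expression becomes $\sum_h\int_{\Gamma_h^\mathrm{in}}|f_h'|\bigl(|g_h^1-g_h^2|-\theta(g_h^1-g_h^2)\bigr)\d\xi$, which is non-negative pointwise since $|a|\ge\theta a$ for $\theta\in\{-1,1\}$. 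This elementary observation closes the proof; attempting instead to force the incoming contributions to vanish is the step in your plan that would fail.
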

\begin{proof}
	Observe that by definition of $\Gamma_h^\mathrm{in}$ and $\Gamma_h^\mathrm{out}$
	\begin{align}\label{eq:LemmaKruhzkovDissipationHelp1}
	\begin{split}
		&\sum_{i=1}^n \int_{ K_i}f'_i(\xi)\, |g_i^1(\xi)-g_i^2(\xi)|\d{\xi}-\sum_{j=n+1}^{n+m}\int_{ K_j}f'_j(\xi)\, |g_j^1(\xi)-g_j^2(\xi)|\d{\xi}\\
		=&\sum_{h=1}^{n+m}\left(\int_{\Gamma^\mathrm{in}_h}|f_h'(\xi)|\, |g_h^1(\xi)-g_h^2(\xi)|\d{\xi}-\int_{\Gamma^\mathrm{out}_h}|f_h'(\xi)|\, |g_h^1(\xi)-g_h^2(\xi)|\d{\xi}\right).
	\end{split}	
	\end{align}
	Due to the first constraint in \eqref{eq:OptimizationProblemWithoutConstraints}, we get
	\begin{align}\label{eq:LemmaKruhzkovDissipationHelp2}
		\sum_{h=1}^{n+m}\left(\int_{\Gamma^\mathrm{in}_h}|f_h'(\xi)| \left(g_h^1(\xi)-g_h^2(\xi) \right) \d{\xi}-\int_{\Gamma^\mathrm{out}_h}|f_h'(\xi)| \left( g_h^1(\xi)-g_h^2(\xi)\right)\d{\xi}\right)=0.
	\end{align}
	Since $g_{h}^s$ satisfy the coupling condition, there exist $\hat\rho_h^s$ as in Definition \ref{def:KineticCouplingCondition} such that $g_{h}^s(\xi)=\chi(\hat\rho_h^s,\xi)$ for a.e.~$\xi\in\Gamma_h^\mathrm{out}$, $s=1,2$. Due to the monotonicity of $\hat\eta_h'$ and $\Pi_h^M$, there exists $\mu\in\{-1,1\}$ such that $\sgn(\hat\rho_h^1-\hat\rho_h^2)=\mu$ for $h=1,\dots,n+m$ (using the convention $\sgn(0)=\mu$ throughout this proof). Since $g_{h}^s(\xi)=\chi(\hat\rho_h^s,\xi)$ for a.e.~$\xi\in\Gamma_h^\mathrm{out}$, $h=1,\dots,n+m$, we have also $\sgn(g_h^1(\xi)-g_h^2(\xi))=\mu$. 
		Combining this observation with \eqref{eq:LemmaKruhzkovDissipationHelp1} and \eqref{eq:LemmaKruhzkovDissipationHelp2} leads to
	\begin{align*}
	\begin{split}
		&\sum_{i=1}^n \int_{ K_i}f'_i(\xi)\, |g_i^1(\xi)-g_i^2(\xi)|\d{\xi}-\sum_{j=n+1}^{n+m}\int_{ K_j}f'_j(\xi)\, |g_j^1(\xi)-g_j^2(\xi)|\d{\xi}\\
		&=\sum_{h=1}^{n+m}\left(\int_{\Gamma^\mathrm{in}_h}|f_h'(\xi)|\, |g_h^1(\xi)-g_h^2(\xi)|\d{\xi}-\mu\int_{\Gamma^\mathrm{out}_h}|f_h'(\xi)|\, (g_h^1(\xi)-g_h^2(\xi))\d{\xi}\right)\\
		&=\sum_{h=1}^{n+m}\int_{\Gamma^\mathrm{in}_h}|f_h'(\xi)|\, \Big(|g_h^1(\xi)-g_h^2(\xi)|-\mu\left(g_h^1(\xi)-g_h^2(\xi)\right)\Big)\d{\xi}\ge 0.
	\end{split}	
	\end{align*}
%
\end{proof}

\section{Well-posedness of the BGK Model}

We study well-posedness of the BGK model following the presentation in \cite[Section~3.5]{Pe2002}. First, we consider a linear kinetic problem for general coupling conditions $\psi$. In a second step, we apply a fixed point argument to construct solutions to the non-linear problem with the coupling condition $\Psi$. We introduce the space $L^1_{\mu_h}(K_h)$ of measurable functions $g_h\colon K_h\to \mathbb R$ with $\int_{K_h}|g_h|\, | f_h'|\d x<\infty$. 
\begin{theorem}\label{thm:ExistenceLinearKineticModel}
Let \ref{eq:Assumptionsfh1}--\ref{eq:Assumptionsfh3} and $g_{0,h}\in L^1(\Omega_h\times K_h)$ hold. Let $m_h\in L^1((0,T)_t\times\Omega_h\times K_h)$ for all $T>0$ and let $\psi\colon \bigtimes_{h=1}^{n+m}L^1_{\mu_h}(\Gamma_h^\mathrm{in}) \to  \bigtimes_{h=1}^{n+m}L^1_{\mu_h}(\Gamma_h^\mathrm{out})$ be a continuous function. Then, there exists a unique distributional solution $g_h\in C([0,\infty)_t,L^1(\Omega_h\times K_h))$ to 
\begin{align*}
	\begin{cases}
		\partial_t g_h+f'_h(\xi)\, \partial_x g_h=\frac{m_h-g_h}{\epsilon}\quad & t>0,x\in\Omega_h,\xi\in K_h,\\
		g_h(t,0,\xi)=\psi_h[g(t,0,\Gamma^{\mathrm{in}})](\xi)\quad &t>0,\xi\in\Gamma_h^{\mathrm{out}},\\
		g_h(0,x,\xi)=g_{0,h}(x,\xi)\quad & x\in\Omega_h,\xi\in K_h.
	\end{cases}
\end{align*} 
The solution satisfies:
\begin{enumerate}[label=(\roman*)]
	\item the characteristics formula
	\begin{align*}
		&g_h(t,x,\xi)=\left [ g_{0,h}(x-f'_h(\xi)t,\xi)e^{-t/\epsilon}+\frac{1}{\epsilon}\int_0^t m_h(t-s,x-f'_h(\xi)s,\xi)\, e^{-s/\epsilon}\d{s} \right ] _{1/t>f'_h(\xi)/x}
		\\
		&\quad+\left [ g_{h}(t-x/f'_h(\xi),0,\xi)e^{-x/(f'_h(\xi)\epsilon)}+\frac{1}{\epsilon}\int_0^{x/f'_h(\xi)} m_h(t-s,x-f'_h(\xi)s,\xi)\,e^{-s/\epsilon}\d{s} \right ] _{1/t<f'(\xi)/x};
	\end{align*}
	\item the relations
	\begin{align*}
		&\partial_t\left(\sum_{h=1}^{m+n} \iint_{\Omega_h\times K_h}g_h(t,x,\xi)\d{x}\d{\xi}\right) =\sum_{h=1}^{m+n} \iint_{\Omega_h\times K_h}\frac{m_h(t,x,\xi)-g_h(t,x,\xi)}{\epsilon}\d{x}\d{\xi}\\
		&\qquad\qquad-\sum_{i=1}^n\int_{ K_i}f'_i(\xi)\, g_i(t,0,\xi)\d{\xi}+\sum_{j=n+1}^{n+m}\int_{ K_j}f'_j(\xi)\, g_j(t,0,\xi)\d{\xi},\\
		&\partial_t\left(\sum_{h=1}^{m+n} \iint_{\Omega_h\times K_h}|g_h(t,x,\xi)|\d{x}\d{\xi}\right)\le\sum_{h=1}^{m+n} \iint_{\Omega_h\times K_h}\frac{|m_h(t,x,\xi)|-|g_h(t,x,\xi)|}{\epsilon}\d{x}\d{\xi}\\
		&\qquad\qquad-\sum_{i=1}^n\int_{ K_i}f'_i(\xi)\, |g_i(t,0,\xi)|\d{\xi}+\sum_{j=n+1}^{n+m}\int_{ K_j}f'_j(\xi)\, |g_j(t,0,\xi)|\d{\xi},\\
	&\|g_h(t,x,\xi)\|_{L^\infty} \le \max \Big\{ \|g_{0,h}\|_{L^\infty},\|m_h\|_{L^\infty},\|\psi_h\|_{L^\infty}\Big\}\in [0,\infty]; 
	\end{align*}
		\item and the equation
		\begin{equation*}
			\partial_t|g_h|+f'_h(\xi)\, \partial_x|g_h|=\frac{\sgn(g_h)\, m_h-|g_h|}{\epsilon}\le \frac{| m_h|-|g_h|}{\epsilon}\qquad t>0,x\in \Omega_h,\xi\in K_h.
		\end{equation*}
\end{enumerate}
\end{theorem}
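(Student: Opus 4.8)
The plan is to treat this as a linear transport problem with a source term, solved road-by-road via the method of characteristics, and then to read off the integral identities as consequences of the representation formula. First I would establish existence and uniqueness. For fixed $\xi\in K_h$ the equation $\partial_t g_h+f_h'(\xi)\partial_x g_h=(m_h-g_h)/\epsilon$ is a linear transport equation along the lines $x-f_h'(\xi)t=\mathrm{const}$, which either originate at $t=0$ (if the characteristic through $(t,x)$ meets $\{t=0\}$ before leaving $\Omega_h$, i.e.\ $1/t>f_h'(\xi)/x$ in the sign convention of the statement) or at the junction $x=0$ (otherwise). Integrating the ODE $\frac{d}{ds}\big(g_h e^{s/\epsilon}\big)=\frac{1}{\epsilon}m_h e^{s/\epsilon}$ along the characteristic, with initial value $g_{0,h}$ in the first case and the boundary value $\psi_h[g(t,0,\Gamma^{\mathrm{in}})]$ in the second, yields exactly the claimed characteristics formula in (i). Uniqueness is immediate from this representation once the boundary data are fixed; existence for the coupled system requires noting that the boundary value needed on $\Gamma_h^{\mathrm{out}}$ is $\psi_h$ applied to the traces on $\Gamma_k^{\mathrm{in}}$, and those incoming traces are, by finite speed of propagation, determined either by the initial data or — crucially — by data on $\Gamma_k^{\mathrm{in}}$ which carries characteristics \emph{into} the junction, hence never needs the coupling in return. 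So the map is well-defined; continuity of $\psi$ and the contraction factor $e^{-t/\epsilon}$ give $g_h\in C([0,\infty)_t,L^1(\Omega_h\times K_h))$, and the fixed-point/iteration is not circular because the coupling only ever feeds outgoing characteristics from incoming ones.

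Next I would derive the relations in (ii). The mass relation follows by integrating the PDE over $\Omega_h\times K_h$, summing over $h$, and using the divergence structure: $\iint_{\Omega_h\times K_h}f_h'(\xi)\partial_x g_h\,dx\,d\xi=\int_{K_h}f_h'(\xi)\big(g_h(t,\infty_h,\xi)-g_h(t,0,\xi)\big)d\xi$, where the boundary term at infinity vanishes by the $L^1$ bound; the sign of the junction term depends on whether $\Omega_h=(-\infty,0)$ or $(0,\infty)$, which produces the $-\sum_i\int f_i' g_i + \sum_j\int f_j' g_j$ on the right. The $L^1$-estimate is obtained the same way after first establishing (iii): multiplying the equation by $\sgn(g_h)$ and using $\sgn(g_h)m_h\le|m_h|$ together with $\sgn(g_h)\partial_t g_h=\partial_t|g_h|$ a.e.\ (valid since $g_h$ is transported and the nonlinearity is Lipschitz, so one may justify this by regularizing $|\cdot|$ and passing to the limit, or directly from the characteristics formula which shows $|g_h|$ solves the transport equation with source $(\sgn(g_h)m_h-|g_h|)/\epsilon$). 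Integrating this over $\Omega_h\times K_h$ and summing gives the stated differential inequality. The $L^\infty$ bound is read straight off the characteristics formula in (i): $g_h(t,x,\xi)$ is a convex combination (weights $e^{-t/\epsilon}$ or $e^{-x/(f_h'(\xi)\epsilon)}$ and $\frac{1}{\epsilon}\int e^{-s/\epsilon}ds$, which sum to $1$) of values of $g_{0,h}$, $m_h$ and $\psi_h$, hence bounded by their maximum.

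The main obstacle is making the existence/uniqueness argument for the \emph{coupled} system rigorous: one must carefully track which characteristics carry information toward the junction versus away from it, confirm that the trace operators $g\mapsto g_h(t,0,\xi)|_{\Gamma_h^{\mathrm{in}}}$ are well-defined on $C([0,\infty)_t,L^1(\Omega_h\times K_h))$ (this is where the structure $K_h=[a_h,b_h]$ with $f_h'$ monotone and the decomposition $K_h=\Gamma_h^{\mathrm{in}}\cup\Gamma_h^{\mathrm{out}}$ up to a null set matters, so that the measure $|f_h'(\xi)|\,d\xi$ controls the traces), and then verify that the composed map is a contraction on $C([0,T]_t,\cdot)$ for the coupled unknown $\big(g_h|_{\Gamma_h^{\mathrm{out}}}\big)_h$ — the exponential weight $e^{-x/(f_h'(\xi)\epsilon)}$ degenerates as $f_h'(\xi)\to 0$, i.e.\ near $\Gamma_h^{\mathrm{in}}\cap\Gamma_h^{\mathrm{out}}$, so the contraction must be set up in the weighted space $L^1_{\mu_h}$ rather than plain $L^1$, which is precisely why the statement phrases $\psi$ as acting between $L^1_{\mu_h}$ spaces. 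Once the trace theory and the weighted fixed-point are in place, everything else is the routine characteristics bookkeeping sketched above. I would follow the presentation in \cite[Section~3.5]{Pe2002} for the single-road building block and only add the junction coupling on top.
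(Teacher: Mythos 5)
Your construction of the solution is essentially the paper's argument: the characteristics formula in (i), the observation that the traces feeding $\psi$ at the junction are carried there along characteristics that never passed through the junction (so the linear problem has no circular dependence), integration of the formula (or of the equation) for the relations in (ii), the sign/renormalization argument for (iii), and reading the $L^\infty$ bound off the formula as a convex combination. One small internal inconsistency: no contraction or fixed point is needed at this stage. Since the problem is linear and the data entering $\psi_h$ on $\Gamma_k^{\mathrm{in}}$ are determined by $g_{0,k}$ and $m_k$ alone, the solution is given explicitly by (i); the Banach fixed point in the weighted setting is only required later, for the nonlinear BGK coupling in Theorem~\ref{thm:ExistenceBGKKineticModel}. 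Your earlier remark that ``the coupling only ever feeds outgoing characteristics from incoming ones'' already gives existence directly, so the final paragraph's contraction set-up is superfluous.

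The genuine gap is uniqueness. It is not ``immediate from this representation'': the theorem asserts uniqueness among distributional solutions in $C([0,\infty)_t,L^1(\Omega_h\times K_h))$, and for an arbitrary competitor you do not know a priori that it satisfies the characteristics formula, nor even that it possesses boundary traces on $\Gamma_h^{\mathrm{in}}$ to which $\psi$ can be applied; your argument only shows that the function you constructed is a solution. The paper closes this by duality: the difference $g=g^1-g^2$ of two solutions solves the problem with $m_h=0$ and zero initial data; one solves a backward adjoint transport problem with arbitrary smooth terminal data and smooth boundary data at $x=0$ for $\xi\in\Gamma_h^{\mathrm{in}}$, uses its classical solution as a test function in the weak formulation of $g$, and concludes that $g_h(T,\cdot,\xi)$ and the trace $g_h(t,0,\xi)$ vanish for a.e.\ $\xi\in\Gamma_h^{\mathrm{in}}$; the coupling condition then transfers this to $\Gamma_h^{\mathrm{out}}$, and a second adjoint problem for $\xi\in\Gamma_h^{\mathrm{out}}$ finishes the proof. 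Alternatively you could prove that every distributional solution with the given data satisfies the formula in (i) (mollify in $(t,x)$ for a.e.\ fixed $\xi$ and carry out a trace analysis at $x=0$, treating the degenerate set $f_h'(\xi)=0$, where the continuity of $\psi$ on $L^1_{\mu_h}$ guarantees the prescribed datum is irrelevant), but some such argument must be supplied before uniqueness can be claimed.
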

\begin{proof}
	We adapt the proof in \cite[Theorem~3.5.1]{Pe2002} to the network case. The characteristics formula~\textit{(i)} can be obtained by solving a linear ordinary differential equation along the characteristics. Notice that the continuity of $\psi$ implies that $\psi[g(t,0,\Gamma^\mathrm{in})]$ is independent of $g_h(t,0,\xi)$ with $f_h'(\xi)=0$. Therefore, we can define, e.g. $g_h(t,0,\xi)=0$ for a.e.\ $f_h'(\xi)=0$, $t>0$. The continuity of $\psi$ ensures also $\int_0^T\int_{K_h}|g_h(x\! =\! 0)|\,|f_h'(\xi)|\d{\xi}\d{t}<\infty$. Now, an approximation argument based on smooth solutions can be used to rigorously prove the existence. The relations in~\textit{(ii)} follow from integration of the characteristics formula. The equation in~\textit{(iii)} can be proven as in the case of the standard Cauchy problem. 

	It remains to prove the uniqueness. Assume that $g^s_h$, $s=1,2$ are solutions with the same initial data and the same coupling condition $\psi$. Then, $g=g^1-g^2$ is also a solution to the linear problem with $m_h=0$ and $g_h(0,x , \xi)=0$ for $x\in\Omega_h,\xi\in K_h$. We solve the auxiliary problem
	\begin{alignat*}{2}
		&\partial_t\varphi_h+f'_h(\xi)\,\partial_x\varphi_h=\frac{\varphi_h}{\epsilon}&\qquad &t\in [0,T], x\in\Omega_h,\xi\in\Gamma_h^\mathrm{in},\\
		& \varphi_h(T,x,\xi)=\nu_h(x,\xi)\in C^1_\mathrm{c}(\Omega_h\times \Gamma_h^\mathrm{in}) &&  x\in\Omega_h,\xi\in\Gamma_h^\mathrm{in},\\
		& \varphi_h(t,0,\xi)=\vartheta_h(t,\xi)\in C^1_\mathrm{c}((0,T)_t\times \Gamma_h^\mathrm{in}) && t\in [0,T], \xi\in\Gamma_h^\mathrm{in}.
	\end{alignat*}
	Since $\varphi_h$ is of class $C^1$, we are able to use it as a test function in the weak formulation of $g_h$ and obtain
	\begin{align*}
		\iint_{\Omega_h\times\Gamma_h^\mathrm{in}}g_h(T,x,\xi)\,\nu_h\d x\d\xi + \iint_{(0,T)_t\times\Gamma_h^\mathrm{in}}|f'_h(\xi)|\,g_h(t,0,\xi)\,\vartheta_h\d x\d\xi=0.
	\end{align*}
	We can take arbitrary test functions $\nu_h,\vartheta_h$ and get $g_h(T,x,\xi)=0$
 for a.e.~$x\in \Omega_h,\xi\in\Gamma_h^\mathrm{in}$	 and $g_h(t,0,\xi)=0$ for a.e. $t\in(0,T),\xi\in\Gamma_h^\mathrm{in}$. Since we have $g_h^1(t,0,\xi)=g_h^2(t,0,\xi)$ for a.e.~$t\in(0,T),\xi\in\Gamma_h^\mathrm{in}$, we get $g_h^1(t,0,\xi)=g_h^2(t,0,\xi)$ for a.e.~$t\in(0,T),\xi\in\Gamma_h^\mathrm{out}$ by the coupling condition $\psi$. Using a similar construction for $\xi\in\Gamma_h^\mathrm{out}$ leads to $g^1_h=g^2_h$ for a.e.~$t\in (0,T),x\in \Omega_h,\xi\in K_h$.
 \end{proof}

\begin{theorem}\label{thm:ExistenceBGKKineticModel}
Let \ref{eq:Assumptionsfh1}--\ref{eq:Assumptionsfh3}, \ref{eq:Assumptionsetah1} and \ref{eq:DefPiM} hold. Let $g_{0,h}\in L^1(\Omega_h\times K_h,D_\xi)$ with $\rho_{0,h}(x)=\int_{K_h}g_{0,h}(x,\xi)\d\xi$. Then, there exists a unique distributional solution $g_h\in C([0,\infty)_t,L^1(\Omega_h\times K_h,D_\xi))$ to 
\begin{align*}
	\begin{cases}
		\partial_t g_h+f'_h(\xi)\partial_x g_h=\frac{\chi(\rho_{g_h},\xi)-g_h}{\epsilon}\quad & t>0,x\in\Omega_h,\xi\in K_h,\\
			\Psi_h[g(t,0,\Gamma^{\mathrm{in}})](\xi)=g_h(t,0,\xi)\quad &t>0,\xi\in\Gamma_h^{\mathrm{out}},\\
		g_h(0,x,\xi)=g_{0,h}(x,\xi)\quad & x\in\Omega_h,\xi\in K_h,
	\end{cases}\end{align*} 
	with $\rho_{g_h}(t,x)=\int_{ K_h}g_h(t,x,\xi)\d{\xi}\in K_h$.
The solution satisfies
\begin{enumerate}[label=(\roman*)]
	\item Conservation of mass:
	\begin{align*}
		\sum_{h=1}^{n+m} \iint_{\Omega_h\times K_h}g_h(T,x,\xi)\d{x}\d{\xi}&=\sum_{h=1}^{n+m} \iint_{\Omega_h\times K_h}g_{0,h}(x,\xi)\d{x}\d{\xi}\qquad \text{for every } T>0;
	\end{align*}
	
	\item $L^1$-contraction: For two solutions $\rho_h^s$ with initial data $g_{0,h}^s$, $s=1,2$, we have
	\begin{equation*}
		\sum_{h=1}^{n+m}\|\rho^1_h(t)-\rho^2_h(t)\|_{ L^1(\Omega_h)}\le \sum_{h=1}^{n+m}\|g^1_h(t)-g^2_h(t)\|_{L^1(\Omega_h\times K_h)}\le \sum_{h=1}^{n+m}\|g_{0,h}^1-g_{0,h}^2\|_{L^1(\Omega_h\times K_h)};
	\end{equation*}
	\item There exists a non-negative bounded function $m_h=m_h(t,x,\xi)$ such that 
	\begin{align*}
		\frac{\chi(\rho_{g_h},\xi)-g_h}{\epsilon}=\partial_\xi m_h \qquad  t>0,x\in\Omega_h,\xi\in K_h,
	\end{align*}
	holds 
	and there exist functions $\mu_h\in L^\infty(K_h)$ independent of $\epsilon$ such that 
	\begin{equation*}
		\iint_{(0,T)_t\times\Omega_h}m_h(t,x,\xi)\d t\d x\le \mu_h(\xi)\qquad \text{for a.e.~}\xi\in K_h.
	\end{equation*} 
\end{enumerate}
\end{theorem}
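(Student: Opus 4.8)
The plan is to combine the linear theory of Theorem~\ref{thm:ExistenceLinearKineticModel} with a Banach fixed-point argument, following the structure of \cite[Section~3.5]{Pe2002} but carrying the network coupling $\Psi$ through the estimates. First I would set up the fixed-point map: given $\bar g=(\bar g_1,\dots,\bar g_{n+m})$ in a suitable closed subset of $\bigtimes_h C([0,T]_t,L^1(\Omega_h\times K_h,D_\xi))$, define $\bar\rho_h(t,x)=\int_{K_h}\bar g_h\d\xi$, solve the linear problem of Theorem~\ref{thm:ExistenceLinearKineticModel} with source $m_h=\chi(\bar\rho_h,\cdot)$ and coupling $\psi=\Psi$, and call the output $\mathcal T(\bar g)$. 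One checks that $\mathcal T$ maps the set $\{g_h\in D_\xi \text{ a.e.},\ \rho_{g_h}\in K_h\}$ into itself: the $D_\xi$ and $L^\infty$ bounds in part~(ii) of the linear theorem give $g_h\in D_\xi$ since $\chi(\bar\rho_h,\cdot)\in D_\xi$, and $\rho_{g_h}\in K_h$ follows because the sign structure of $\chi$ together with $|g_h|\le|\chi(\bar\rho_h,\cdot)|$ propagates along characteristics; the coupling $\Psi$ outputs equilibrium functions $\chi(\hat\rho_h,\cdot)$ with $\hat\rho_h\in K_h$, so it respects these constraints as well. The contraction estimate comes from the $L^1$-dissipation relation in part~(ii): for two inputs $\bar g^1,\bar g^2$ the boundary terms are controlled by Proposition~\ref{prop:EntropyInequalityJunctionWithout} (the kinetic $L^1$-contraction of $\Psi$), so they have the favorable sign and can be dropped, and $\|\chi(\bar\rho^1_h,\cdot)-\chi(\bar\rho^2_h,\cdot)\|_{L^1}\le\|\bar\rho^1_h-\bar\rho^2_h\|_{L^1}\le\|\bar g^1_h-\bar g^2_h\|_{L^1}$; a Gr\"onwall argument on $(0,T)$ gives a contraction for $T$ small depending only on $\epsilon$, and iteration in time yields the global solution in $C([0,\infty)_t,L^1)$. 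Uniqueness is inherited from the linear uniqueness statement combined with the same $L^1$-contraction bound.

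For part~(i), conservation of mass: I would integrate the first relation of part~(ii) of Theorem~\ref{thm:ExistenceLinearKineticModel} with $m_h=\chi(\rho_{g_h},\cdot)$. The collision term integrates to zero since $\int_{K_h}\chi(\rho_{g_h},\xi)\d\xi=\rho_{g_h}=\int_{K_h}g_h\d\xi$ by Lemma~\ref{lemma:BasicConvexityProperties}(i). The junction boundary terms $-\sum_i\int f_i' g_i(t,0,\cdot)+\sum_j\int f_j' g_j(t,0,\cdot)$ vanish precisely by the mass-conservation constraint \eqref{eq:MassConservationKineticJunction} built into Definition~\ref{def:KineticCouplingCondition} — this is where the ``s.t.''-constraint in the optimization problem pays off. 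Hence the total kinetic mass is constant, giving~(i).

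For part~(ii), the $L^1$-contraction: apply the second relation of part~(ii) of the linear theorem to $g=g^1-g^2$, i.e. $m_h=\chi(\rho_{g_h^1},\cdot)-\chi(\rho_{g_h^2},\cdot)$. The collision contribution $\sum_h\iint(|m_h|-|g_h|)/\epsilon$ is $\le 0$ because $\|\chi(\rho^1,\cdot)-\chi(\rho^2,\cdot)\|_{L^1(K_h)}=|\rho^1-\rho^2|=|\int_{K_h}(g_h^1-g_h^2)\d\xi|\le\|g_h^1-g_h^2\|_{L^1(K_h)}$; the junction terms are $\le 0$ by Proposition~\ref{prop:EntropyInequalityJunctionWithout} applied to the traces $g_h^s(t,0,\cdot)$. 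Therefore $\partial_t\sum_h\|g_h^1(t)-g_h^2(t)\|_{L^1(\Omega_h\times K_h)}\le 0$, which chains with the same $|\rho^1_h-\rho^2_h|\le\|g_h^1-g_h^2\|_{L^1(K_h)}$ bound (integrated in $x$) to give the displayed two-sided estimate.

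For part~(iii), the kinetic defect measure: write $\epsilon^{-1}(\chi(\rho_{g_h},\xi)-g_h)=\partial_\xi m_h$ with $m_h(t,x,\xi)=\epsilon^{-1}\int_{-\infty}^\xi(\chi(\rho_{g_h},\zeta)-g_h(t,x,\zeta))\d\zeta$; nonnegativity of $m_h$ is the standard fact that, for fixed $\rho$, $\chi(\rho,\cdot)$ is the $L^1(\mathbb R,D_\xi)$-minimizer in Lemma~\ref{lemma:BasicConvexityProperties}(iii), which forces the primitive of the difference to be signed, and boundedness of $m_h$ uniform in $\epsilon$ plus the $\epsilon$-independent bound $\iint_{(0,T)\times\Omega_h}m_h\le\mu_h(\xi)$ is obtained exactly as for the single-road Cauchy problem in \cite[Section~3.5]{Pe2002}, by testing the $|g_h|$-equation of part~(iii) of Theorem~\ref{thm:ExistenceLinearKineticModel} against the entropy weight $\xi\mapsto\int_0^\xi\operatorname{sgn}(\zeta)\d\zeta$ and using the mass and $L^1$ estimates already proven. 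I expect the main obstacle to be the bookkeeping at the junction: one must verify that the boundary traces $g_h^\epsilon(t,0,\cdot)$ actually lie in the domain of $\Psi$ (i.e. have finite $|f_h'|$-weighted $L^1$ norm) so that the coupling condition is meaningfully imposed, and that the sign of the junction boundary terms really is controlled by Propositions~\ref{prop:EntropyInequalityJunctionWithout} and the constraint \eqref{eq:MassConservationKineticJunction} rather than merely bounded; this is where the continuity of $\Psi$ (needed to invoke Theorem~\ref{thm:ExistenceLinearKineticModel}) and the careful splitting of $K_h$ into $\Gamma_h^{\mathrm{in}}$, $\Gamma_h^{\mathrm{out}}$ and $\{f_h'=0\}$ must be handled with care.
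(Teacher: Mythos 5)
Your proposal follows essentially the same route as the paper: a fixed-point iteration on the linear solution operator of Theorem~\ref{thm:ExistenceLinearKineticModel} with source $\chi(\bar\rho_h,\cdot)$, contraction via the $L^1$-dissipation relation with the junction boundary terms controlled by Proposition~\ref{prop:EntropyInequalityJunctionWithout}, mass conservation from the constraint \eqref{eq:MassConservationKineticJunction}, and the defect measure as in \cite[Corollary~3.6.2]{Pe2002}; the only (immaterial) differences are that the paper runs the fixed point on the macroscopic densities $\rho$ rather than on $g$, and that its contraction constant $(1-e^{-T/\epsilon})<1$ needs no smallness of $T$.
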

\begin{proof}
The proof is based on arguments in \cite[Theorem~3.6.1]{Pe2002} and \cite[Theorem~2.1]{Ho2020}.
	We define the Banach spaces $X_T=C([0,T]_t,\bigtimes_{h=1}^{n+m}L^1(\Omega_h,K_h))$ and $Y_T=C([0,T]_t,\bigtimes_{h=1}^{n+m}L^1(\Omega_h\times K_h,D_\xi))$ with their standard norms. 
	We define the map
\begin{equation*}
	\Phi(v)=\rho\quad\text{with}\quad\rho_h=\int_{ K_h}g_h\d{\xi},
\end{equation*} 
for the unique solution $g$ obtained by Theorem~\ref{thm:ExistenceLinearKineticModel} with $m_h(t,x,\xi)=\chi(v_h(t,x),\xi)$, $v\in X_T$.
The statement of Theorem~\ref{thm:ExistenceLinearKineticModel} holds true for $\Psi$ as in Definition~\ref{def:KineticCouplingCondition} and we obtain $g\in Y_T$ and $\rho\in X_T$. This can be shown by a careful treatment of the set $D_\xi$ involved in the definition of $\Psi, X_T, Y_T$ and by using the characteristics formula.

\noindent
\underline{Step 1:} The operator $\Phi$ is a strict contraction: Take $v^s\in X_T$, $s=1,2$. Due to Theorem~\ref{thm:ExistenceLinearKineticModel} and Proposition~\ref{prop:EntropyInequalityJunctionWithout}, we have
\begin{align*}
 	&\partial_t \sum_{h=1}^{m+n}\iint_{\Omega_h\times K_h}|g_h^1-g_h^2|\d{x}\d\xi\\
 	&\le \frac{1}{\epsilon} \sum_{h=1}^{m+n}\iint_{\Omega_h\times K_h}|\chi(v^1_h,\xi)-\chi(v^2_h,\xi)|-|g_h^1-g_h^2|\d{x}\d\xi\\
 	&\qquad -\sum_{i=1}^n\int_{ K_i}f'_i(\xi)\, |g_i^1(t,0,\xi)-g_i^2(t,0,\xi)|\d{\xi}+\sum_{j=n+1}^{n+m}\int_{ K_j}f'_j(\xi)\, |g_j^1(t,0,\xi)-g_j^2(t,0,\xi)|\d{\xi}\\
 	&\le  \frac{1}{\epsilon} \sum_{h=1}^{m+n}\int_{\Omega_h}\left[ |v^1_h-v^2_h|-\int_{ K_h}|g_h^1-g_h^2|\d\xi\right] \d{x}.
\end{align*} 
The differential inequality implies a Gronwall estimate on $g_h^s$
\begin{equation*}
	\|g^1-g^2\|_{Y_T}=\sum_{h=1}^{m+n}\iint_{\Omega_h\times K_h}|g_h^1-g_h^2|\d{x}\d\xi\le (1-e^{-T/\epsilon})\|v^1-v^2\|_{X_T}
\end{equation*}
 and therefore also on $\rho^s$ 
\begin{equation*}
 	\|\rho^1-\rho^2\|_{X_T}\le (1-e^{-T/\epsilon})\|v^1-v^2\|_{X_T}.
\end{equation*}

\noindent
\underline{Step 2:} Existence and uniqueness: We apply the Banach fixed point theorem to obtain a unique fixed point $\rho$ of $\Phi$. The unique kinetic solution $g_h$ is obtained by Theorem~\ref{thm:ExistenceLinearKineticModel} with $m_h(t,x,\xi)=\chi(\rho_h(t,x),\xi)$. A unique global (in time) solution is obtained by a standard construction \cite{Pe2002}. 

\noindent
\underline{Step 3:} The remaining properties follow from Theorem~\ref{thm:ExistenceLinearKineticModel}, Proposition~\ref{prop:EntropyInequalityJunctionWithout} and \eqref{eq:MassConservationKineticJunction}. Note that $\chi(\rho_{g_h},\xi)=0$ for $\xi\notin K_h$, $\int_{ K_h}|\chi(\rho_{g_h}^1,\xi)-\chi(\rho_{g_h}^2,\xi)|\d\xi=|\rho_{g_h}^1-\rho_{g_h}^2|\le \int_{ K_h}|g_{h}^1-g_h^2|\d\xi$ and $\rho_{g_h}\in K_h$ for $g_h\in L^1(K_h,D_\xi)$. The existence of $m_h$ in \textit{(iii)} follows as in \cite[Corollary~3.6.2]{Pe2002}.
\end{proof}

\subsection{Stationary solutions to the BGK model}

We prove existence of a class of stationary solutions to the BGK model on networks. The stationary solutions will be used to justify the convergence towards the macroscopic coupling condition.

We define the set
	\begin{equation}\label{eq:defmathcalK}
		\mathcal K=\left\{\bar\rho \in \bigtimes_{h=1}^{n+m}K_h\,\Big|\, \bar\rho=\mathcal{RP}(\bar\rho) \text{ with }\bar\rho_h\in\Gamma_h^\mathrm{out} \text{ if } f_h(\bar\rho_h)=f_h(\hat\rho_h), \hat\rho_h\in\Gamma_h^\mathrm{out}\right\}
	\end{equation}
where $\hat\rho_h$ is defined as in Definition~\ref{def:RiemannSolver} with initial data $\rho_{0,h}=\bar\rho_h$.

\begin{proposition}\label{prop:StationaryKineticSolution}
	Let \ref{eq:Assumptionsfh1}--\ref{eq:Assumptionsfh4}, \ref{eq:Assumptionsetah1} and \ref{eq:DefPiM} hold. For every $\bar \rho\in\mathcal K$ and $\epsilon>0$, there exists a unique stationary solution $\bar g_h^\epsilon(t,x,\xi)=\bar g_h^\epsilon(x,\xi)$ to the BGK model in Theorem~\ref{thm:ExistenceBGKKineticModel} such that
	\begin{equation*}
		\bar g_h^\epsilon(x,\cdot) \to \chi(\bar\rho_h,\cdot)\qquad\text{in }L^1(K_h)\text{ as }|x|\to\infty, x\in\Omega_h.
	\end{equation*}
	Furthermore, we have
	\begin{equation*}
		\bar g_h^\epsilon(t,x,\xi)\to \chi(\bar\rho_h,\cdot)\qquad \text{in }L^1_{\mathrm{loc}}((0,\infty)_t\times\Omega_h\times K_h)\text{ as }\epsilon\to 0.
	\end{equation*}
\end{proposition}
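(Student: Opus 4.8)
The plan is to reduce the stationary equation to a relaxation transport problem on each half-line $\Omega_h$, to solve it with the fixed-point machinery already set up for the BGK model, and then to let $\epsilon\to0$ by a boundary-layer estimate. A stationary solution must satisfy $f_h'(\xi)\,\partial_x\bar g_h^\epsilon(x,\xi)=\epsilon^{-1}\big(\chi(\rho_{\bar g_h^\epsilon}(x),\xi)-\bar g_h^\epsilon(x,\xi)\big)$ on $\Omega_h$ with $\rho_{\bar g_h^\epsilon}=\int_{K_h}\bar g_h^\epsilon\,\mathrm{d}\xi$. For every $\xi$ with $f_h'(\xi)\neq0$, which by \ref{eq:Assumptionsfh4} omits only a Lebesgue-null set of $\xi$, this is a scalar linear ODE in $x$ driven by $\rho_{\bar g_h^\epsilon}$, while on $\{f_h'(\xi)=0\}$ it forces $\bar g_h^\epsilon(\cdot,\xi)=\chi(\rho_{\bar g_h^\epsilon},\xi)$. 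I would prescribe $\bar g_h^\epsilon(0,\xi)$ for $\xi\in\Gamma_h^{\mathrm{out}}$ through the kinetic coupling $\Psi$ of Definition~\ref{def:KineticCouplingCondition} and impose $\bar g_h^\epsilon(x,\cdot)\to\chi(\bar\rho_h,\cdot)$ in $L^1(K_h)$ as $|x|\to\infty$ on the complementary characteristics. This is exactly the stationary instance of the linear problem of Theorem~\ref{thm:ExistenceLinearKineticModel}, whose characteristics formula gives, for a frozen profile $\rho=(\rho_h)_h$ with $\rho_h\to\bar\rho_h$, a solution map $\rho\mapsto\mathcal S_\epsilon[\rho]$.

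Existence and uniqueness I would obtain by the contraction argument of Theorem~\ref{thm:ExistenceBGKKineticModel} applied to $\mathcal S_\epsilon$. The map sends admissible profiles (valued in $K_h$ and equal to $\bar\rho_h$ near infinity) into themselves, and by the relations in Theorem~\ref{thm:ExistenceLinearKineticModel}(ii)--(iii) together with the kinetic junction inequality of Proposition~\ref{prop:EntropyInequalityJunctionWithout} it obeys the dissipative differential inequality used there, so its fixed point is the desired stationary solution; uniqueness follows because the difference of two stationary solutions with the same behavior at infinity and the same $\Psi$-data satisfies the linear relaxation equation and the same estimate, hence vanishes. Two facts use $\bar\rho\in\mathcal K$ from \eqref{eq:defmathcalK}. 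First, the junction data must be admissible: since $\bar\rho=\mathcal{RP}(\bar\rho)$, the flux balance and the $\Pi^M$-condition of Definition~\ref{def:RiemannSolver} hold, and one checks, as in Proposition~\ref{prop:SolutionOptimizationWithout} and the construction of Definition~\ref{def:KineticCouplingCondition}, that the kinetic flux constraint \eqref{eq:MassConservationKineticJunction} is then met by the state $\hat\rho_h$ attached to $\bar\rho$, so $\Psi$ returns the equilibrium $\chi(\hat\rho_h,\cdot)$ at $x=0$ with $f_h(\hat\rho_h)=f_h(\bar\rho_h)$. Second, the restriction to $\Omega_h$ must carry trace $\bar\rho$: by \eqref{eq:HelEquivalenceRiemannSolverOptimziationProb1} the profile depends on $\hat\rho_h$ only through its values on $\Gamma_h^{\mathrm{out}}$, and $\chi(\hat\rho_h,\cdot)=\chi(\bar\rho_h,\cdot)$ there precisely because $\mathcal K$ selects $\bar\rho_h\in\Gamma_h^{\mathrm{out}}$ whenever $f_h(\bar\rho_h)=f_h(\hat\rho_h)$ with $\hat\rho_h\in\Gamma_h^{\mathrm{out}}$, which pins down the otherwise non-unique preimage of $f_h$.

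For the limit $\epsilon\to0$, away from the junction $\bar g_h^\epsilon$ is a boundary-layer perturbation of $\chi(\bar\rho_h,\cdot)$: the layer sits at $x=0$ along the characteristics $\xi\in\Gamma_h^{\mathrm{out}}$ and has relaxation length $O(\epsilon\,|f_h'(\xi)|)$, hence $O(\epsilon\,\|f_h'\|_{\infty})$ uniformly in $\xi$, as is read off from the ODE. Since every compact $Q\subset(0,\infty)_t\times\Omega_h$ is bounded away from $x=0$, this gives $\bar g_h^\epsilon(\cdot,\cdot,\xi)\to\chi(\bar\rho_h,\xi)$ on $Q$, uniformly for $\xi$ away from $\{f_h'=0\}$; using $|\bar g_h^\epsilon-\chi(\bar\rho_h,\cdot)|\le2$ on $K_h$ and, via \ref{eq:Assumptionsfh4}, the negligibility of $\{f_h'=0\}$, dominated convergence in $\xi$ yields $\bar g_h^\epsilon\to\chi(\bar\rho_h,\cdot)$ in $L^1_{\mathrm{loc}}((0,\infty)_t\times\Omega_h\times K_h)$.

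I expect the main obstacle to be the existence step together with the trace identification. In the stationary, equivalently long-time, regime the contraction factor $1-e^{-T/\epsilon}$ of Theorem~\ref{thm:ExistenceBGKKineticModel} degenerates to $1$, so one must either exploit the order-preserving structure of $\Psi$ and of the equilibrium $\chi$ to run a monotone iteration, or localise the contraction in $x$; and, intertwined with this, one must verify that the kinetic boundary trace at $x=0$ matches the macroscopic Riemann solver after restriction, i.e.\ the identity $\chi(\hat\rho_h,\cdot)=\chi(\bar\rho_h,\cdot)$ on $\Gamma_h^{\mathrm{out}}$, which is exactly where the fixed-point relation $\bar\rho=\mathcal{RP}(\bar\rho)$, the selection $\bar\rho_h\in\Gamma_h^{\mathrm{out}}$ built into $\mathcal K$, and the non-degeneracy \ref{eq:Assumptionsfh4} come together.
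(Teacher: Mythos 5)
The central gap is the existence (and uniqueness) of the stationary half-line profiles themselves, which is the heart of Proposition~\ref{prop:StationaryKineticSolution}. You correctly anticipate that the contraction of Theorem~\ref{thm:ExistenceBGKKineticModel} degenerates in the stationary regime (the factor $1-e^{-T/\epsilon}$ tends to $1$), but the proposed repairs (monotone iteration, localizing the contraction in $x$) are not carried out, and they are not routine: for the stationary problem on $\Omega_h$ one cannot prescribe independently the equilibrium data $\chi(\hat\rho_h,\cdot)$ at $x=0$ on $\Gamma_h^{\mathrm{out}}$ and the far-field state $\bar\rho_h$. Solvability with $\bar g_h^\epsilon(x,\cdot)\to\chi(\bar\rho_h,\cdot)$ forces the constant macroscopic flux to equal $f_h(\bar\rho_h)$, and this value must be compatible with the range of fluxes generated by the boundary data. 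This is precisely what the paper verifies with the explicit inequalities involving $\sgn_\pm$ and $\sigma_h=\operatorname{argmax}f_h(\Gamma_h^{\mathrm{out}})$ before invoking the half-line stationary BGK result of Vasseur \cite{Va2012} (after the change of variables $u_h=\sigma_h-\rho_h$, the modified fluxes $\tilde f_h$ and $y\mapsto -x$), and it is also exactly where $\bar\rho\in\mathcal K$ enters (the equality case $f_h(\hat\rho_h)=f_h(\bar\rho_h)$). Your use of $\mathcal K$ only to identify $\chi(\hat\rho_h,\cdot)=\chi(\bar\rho_h,\cdot)$ on $\Gamma_h^{\mathrm{out}}$ misses this role, and nothing in your argument shows that the fixed point you construct actually converges to $\chi(\bar\rho_h,\cdot)$ as $|x|\to\infty$ rather than to some other state with the same flux, or at all. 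Your uniqueness argument is also not valid as stated: the difference of two stationary solutions does not satisfy a linear relaxation equation, since the collision terms involve the two distinct densities $\rho_{\bar g_h^{1,\epsilon}}$ and $\rho_{\bar g_h^{2,\epsilon}}$; an $L^1$-type comparison in $x$ for the nonlinear stationary problem is needed, and this again is part of the cited half-line result rather than a consequence of Theorem~\ref{thm:ExistenceLinearKineticModel}.

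The limit $\epsilon\to 0$ also has a gap as written: you read off an $O(\epsilon\,|f_h'(\xi)|)$ layer width ``from the ODE'', but the source $\chi(\rho_{\bar g_h^\epsilon}(x),\xi)$ is itself unknown, so no quantitative decay follows from the linear characteristic formula alone; the decay of $\bar g_h^\epsilon(x,\cdot)-\chi(\bar\rho_h,\cdot)$ as $|x|\to\infty$ is nontrivial content of the stationary construction. The paper avoids this entirely by the exact scaling invariance $\bar g_h^{\epsilon_1}(\epsilon_1 x,\xi)=\bar g_h^{\epsilon_2}(\epsilon_2 x,\xi)$, which together with the far-field convergence of a single profile immediately gives convergence in $L^1_{\mathrm{loc}}((0,\infty)_t\times\Omega_h\times K_h)$; your heuristic should be replaced by this argument. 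The remaining part of your sketch, namely the a posteriori verification of the coupling condition (boundary values $\chi(\hat\rho_h,\cdot)$ on $\Gamma_h^{\mathrm{out}}$ together with the flux balance coming from $\bar\rho=\mathcal{RP}(\bar\rho)$), is consistent with the paper, but without the half-line existence and uniqueness input the proof does not go through.
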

\begin{proof}
	For notational convenience we use the notation $\sgn_+(\xi)=\max\{\sgn(\xi),0\}$, $\sgn_-(\xi)=\min\{\sgn(\xi),0\}$ and $\sigma_h=\operatorname{argmax}f_h(\Gamma_h^\mathrm{out})$. Let $\hat\rho_h$ be defined as in Definition~\ref{def:RiemannSolver} with initial data $\rho_{0,h}=\bar\rho_h$.
	
	For $i=1,\dots,n$ we compute
	\begin{align*}
		\int_{\Gamma_i^\mathrm{out}}f_i'(\xi)\, \chi(\hat\rho_i,\xi)\d\xi 
		&=\int_{\Gamma_i^\mathrm{out}}f_i'(\xi)\, \sgn_-(\xi)\d\xi+ \int_{\Gamma_i^\mathrm{out}}f_i'(\xi)\, \mathbf{1}_{(-\infty,\hat\rho_i]}\d\xi \\
		&=\sgn_+(0-\sigma_i)(f_i^{\max}-f_i(0))+\sgn_+(\hat\rho_i-\sigma_i)(f_i(\hat\rho_i)-f_i^{\max}),\\ 
		\int_{\Gamma_i^\mathrm{in}}f_i'(\xi)\, \sgn_+(\xi)\d\xi &=\sgn_+(\sigma_i-0)(f_i^{\max}-f_i(0)),\\
		\int_{\Gamma_i^\mathrm{in}}f_i'(\xi)\, \sgn_-(\xi)\d\xi &=-f_i^{\max}+\sgn_+(\sigma_i-0)(f_i^{\max}-f_i(0)),
	\end{align*}
		and obtain 
	\begin{align*}
		\int_{\Gamma_i^\mathrm{out}}f_i'(\xi)\, \chi(\hat\rho_i,\xi)\d\xi +\int_{\Gamma_i^\mathrm{in}}f_i'(\xi)\, \sgn_+(\xi)\d\xi+f_i(0) &= f_i^{\max}+\sgn_+(\hat\rho_i-\sigma_i)(f_i(\hat\rho_i)-f_i^{\max})\ge f_i(\bar\rho_i),\\
		\int_{\Gamma_i^\mathrm{out}}f_i'(\xi)\, \chi(\hat\rho_i,\xi)\d\xi  +\int_{\Gamma_i^\mathrm{in}}f_i'(\xi)\, \sgn_-(\xi)\d\xi +f_i(0) &= \sgn_+(\hat\rho_i-\sigma_i)(f_i(\hat\rho_i)-f_i^{\max})\le 0\le f_i(\bar\rho_i).
	\end{align*}
	By the same arguments we get 
	\begin{align*}
		\int_{\Gamma_j^\mathrm{out}}f_j'(\xi)\, \chi(\hat\rho_j,\xi)\d\xi  +\int_{\Gamma_j^\mathrm{in}}f_j'(\xi)\, \sgn_-(\xi)\d\xi +f_j(0) &= f_j^{\max}+\sgn_+(\sigma_j-\hat\rho_j)(f_j(\hat\rho_j)-f_j^{\max})\ge f_j(\bar\rho_j),
		\\
		\int_{\Gamma_j^\mathrm{out}}f_j'(\xi)\, \chi(\hat\rho_j,\xi)\d\xi +\int_{\Gamma_j^\mathrm{in}}f_j'(\xi)\, \sgn_+(\xi)\d\xi+f_j(0) &= \sgn_+(\sigma_j-\hat\rho_j)(f_j(\hat\rho_j)-f_j^{\max})\le 0\le f_j(\bar\rho_j)	,
	\end{align*}
	for $j=n+1,\dots,n+m$.

	Using these estimates and \ref{eq:Assumptionsfh4}, we are able to repeat the arguments from \cite[Proposition~3]{Va2012} in our setting. 
	For $i=1,\dots,n$ we  set $u_i=\sigma_i-\rho_i$, replace the Burgers' flux by $\tilde f_i(u_i)=f_i(\sigma_i-u_i)-f_i^{\max}$ and use the variable transformation $y \mapsto -x$.
	For $j=n+1,\dots,n+m$ we set $u_j=\sigma_j-\rho_j$ and $\tilde f_j(u_j)=f_j^{\max}-f_j(\sigma_j-u_j)$ without using a variable transformation.

	 Therefore, there exists a unique stationary solution $\bar g_h(t,x,\xi)=\bar g_h(x,\xi)$ to the BGK model on $\Omega_h$ satisfying 
	\begin{align*}
	\begin{cases}
		f'_h(\xi)\, \partial_x \bar g_h^\epsilon=\frac{\chi(\rho_{\bar g_h^\epsilon},\xi)-\bar g_h^\epsilon}{\epsilon}\quad & t>0,x\in\Omega_h,\xi\in K_h,\\
			\bar g_h^\epsilon(0,\xi)=\chi(\hat\rho_h,\xi)\quad &t>0,\xi\in\Gamma_h^{\mathrm{out}},\\
		\int_{K_h}f_h'(\xi)\, \bar g_h^\epsilon(x,\xi)\d\xi +f_h(0)=f_h(\bar\rho_h)\quad & x\in\Omega_h,\\
		\bar g_h^\epsilon(x,\cdot) \to \chi(\bar\rho_h,\cdot) &\text{in }L^1(K_h)\text{ as }|x|\to\infty, x\in\Omega_h.
	\end{cases}
	\end{align*}
	Remark that Proposition~3 in \cite{Va2012} requires the assumption $\bar\rho\in\mathcal K$ since
	\begin{align*}
		\int_{\Gamma_i^\mathrm{out}}f_i'(\xi)\, \chi(\hat\rho_i,\xi)\d\xi  +\int_{\Gamma_i^\mathrm{in}}f_i'(\xi)\, \sgn_+(\xi)\d\xi +f_i(0) &=  f_i(\bar\rho_i),\\
		\text{resp.}\quad \int_{\Gamma_j^\mathrm{out}}f_j'(\xi)\, \chi(\hat\rho_j,\xi)\d\xi  +\int_{\Gamma_j^\mathrm{in}}f_j'(\xi)\, \sgn_-(\xi)\d\xi +f_j(0) &=  f_j(\bar\rho_j),
	\end{align*}
	if $f_h(\hat\rho_h)=f_h(\bar\rho_h)$ for some $h=1,\dots,n+m$.
	
	The functions $\bar g_h(t,x,\xi)=\bar g_h(x,\xi)$ satisfy the coupling condition at the junction since $\bar g_h^\epsilon(t,0,\xi)=\chi(\hat\rho_h,\xi)$ for a.e.\ $t>0,\xi\in\Gamma_h^{\mathrm{out}}$ and
	\begin{equation*}
		\sum_{i=1}^{n}\int_{K_i}f_i'(\xi)\, \bar g_i(t,0,\xi) \d\xi-\sum_{j=n+1}^{n+m}\int_{K_j}f_j'(\xi)\, \bar g_j(t,0,\xi) \d\xi=\sum_{i=1}^{n}f_i(\bar\rho_i)-\sum_{j=n+1}^{n+m}f_j(\bar\rho_j)=0.
	\end{equation*}
	
	The convergence $\bar g_h^\epsilon\to \chi(\bar\rho_h,\cdot)$ as $\epsilon\to 0$ follows from $\bar g_h^{\epsilon_1}(\epsilon_1\, x,\xi)=\bar g_h^{\epsilon_2}(\epsilon_2\, x,\xi)$ for a.e.~$x\in\Omega_h,\xi\in K_h$ and $\bar g_h^\epsilon(x,\cdot) \to \chi(\bar\rho_h,\cdot)$ as $|x|\to\infty$. 
\end{proof}

\section{Existence and Uniqueness of the LWR model at the junction}

\subsection{Interior Relaxation}

We apply a compactness result by Panov \cite{Pa1995} for measure--valued solutions to pass to the limit in the interior of the domains $\Omega_h$. 
The compactness result uses a regularizing effect of non-linear scalar conservation laws. The regularizing effect is only available if the non--degeneracy condition \ref{eq:Assumptionsfh4} holds. Let us remark that compactness results with possibly degenerate flux are available for the initial value problem \cite{Pe2002} and initial boundary value problem \cite{IV2004,Kw2008}. For these results, a careful analysis for the initial and boundary conditions is needed. The extension of them to networks may be a subject for future research.

\begin{theorem}\label{thm:CompactnessBGKSolutions}
	Let \ref{eq:Assumptionsfh1}--\ref{eq:Assumptionsfh4}, \ref{eq:Assumptionsetah1} and \ref{eq:DefPiM} hold. Then, there exist $\rho_h\in C([0,\infty)_t,L^1(\Omega_h,K_h))$ such that, after possibly taking a subsequence,
	\begin{alignat*}{2}
		\int_{ K_h}g_h^\epsilon\d\xi =\rho_{g_h^\epsilon}&\to \rho_h \qquad \text{as }\epsilon\to 0 \quad \text{in }L^1_{\mathrm{loc}}([0,\infty)_t\times\Omega_h).
	\end{alignat*}
	Furthermore, $\rho_h$ solve the entropy inequality \eqref{eq:WeakEntropySolution} on $(0,\infty)_t\times\Omega_h$ in the distributional sense. 
\end{theorem}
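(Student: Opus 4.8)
The plan is to carry out the classical relaxation limit from the BGK model to the scalar conservation law, combined with Panov's compactness result; the only point specific to the network is that everything is done locally on each arc $\Omega_h$, decoupled from the junction. First, since $|g_h^\epsilon(t,x,\xi)|\le 1$ and $g_h^\epsilon$ is supported in $\xi\in K_h$, I would extract, simultaneously for the finitely many $h=1,\dots,n+m$, a single subsequence $\epsilon\to 0$ along which $g_h^\epsilon\rightharpoonup\bar g_h$ weak-$*$ in $L^\infty((0,\infty)_t\times\Omega_h\times K_h)$. Since $\rho_{g_h^\epsilon}(t,x)=\int_{K_h}g_h^\epsilon\,\d{\xi}$ takes values in the compact interval $K_h$ by Theorem~\ref{thm:ExistenceBGKKineticModel}, I would also pass, along a further subsequence, to the Young measure $(\nu^h_{t,x})$ generated by $\rho_{g_h^\epsilon}$, so that $F(\rho_{g_h^\epsilon})\rightharpoonup \int_{K_h}F(\lambda)\,\d{\nu^h_{t,x}}(\lambda)$ weak-$*$ for every $F\in C(K_h)$.

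Next I would use the defect estimate in Theorem~\ref{thm:ExistenceBGKKineticModel}(iii): there are $m_h^\epsilon\ge 0$ with $\chi(\rho_{g_h^\epsilon},\xi)-g_h^\epsilon=\epsilon\,\partial_\xi m_h^\epsilon$ and $\iint_{(0,T)_t\times\Omega_h}m_h^\epsilon\,\d{t}\,\d{x}\le\mu_h(\xi)$ with $\mu_h\in L^\infty(K_h)$ independent of $\epsilon$. Hence $m_h^\epsilon$ is bounded in $L^1_{\mathrm{loc}}$, so along a further subsequence $m_h^\epsilon\rightharpoonup\bar m_h\ge 0$ as a locally bounded measure, while $\epsilon m_h^\epsilon\to 0$ in $L^1_{\mathrm{loc}}$, whence $\chi(\rho_{g_h^\epsilon},\xi)-g_h^\epsilon\to 0$ in $\mathcal D'$. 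This identifies $\bar g_h(t,x,\xi)=\int_{K_h}\chi(\lambda,\xi)\,\d{\nu^h_{t,x}}(\lambda)$. Writing the BGK equation as $\partial_t g_h^\epsilon+f_h'(\xi)\,\partial_x g_h^\epsilon=\partial_\xi m_h^\epsilon$ and letting $\epsilon\to 0$ gives $\partial_t\bar g_h+f_h'(\xi)\,\partial_x\bar g_h=\partial_\xi\bar m_h$ in $\mathcal D'((0,\infty)_t\times\Omega_h\times K_h)$. Testing against $\eta_h'(\xi)$ for an arbitrary convex entropy $\eta_h$ and using $\bar m_h\ge 0$ yields $\partial_t\!\int_{K_h}\eta_h(\lambda)\,\d{\nu^h_{t,x}}(\lambda)+\partial_x\!\int_{K_h}G_h(\lambda)\,\d{\nu^h_{t,x}}(\lambda)\le 0$, i.e.\ $(\nu^h_{t,x})$ is a measure-valued entropy solution of $\partial_t\rho+\partial_x f_h(\rho)=0$ on $(0,\infty)_t\times\Omega_h$.

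At this point the non-degeneracy assumption \ref{eq:Assumptionsfh4} enters. By Panov's compactness theorem \cite{Pa1995}, a measure-valued entropy solution of a scalar conservation law with non-degenerate flux is concentrated, $\nu^h_{t,x}=\delta_{\rho_h(t,x)}$ for some $\rho_h\in L^\infty((0,\infty)_t\times\Omega_h,K_h)$, applied locally on each $\Omega_h$. The reduction of the Young measure to a Dirac mass upgrades weak to strong convergence, $\rho_{g_h^\epsilon}\to\rho_h$ in $L^1_{\mathrm{loc}}([0,\infty)_t\times\Omega_h)$, hence $\chi(\rho_{g_h^\epsilon},\xi)\to\chi(\rho_h,\xi)$ in $L^1_{\mathrm{loc}}$ and $\bar g_h=\chi(\rho_h,\xi)$. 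The limit equation then reads $\partial_t\chi(\rho_h,\xi)+f_h'(\xi)\,\partial_x\chi(\rho_h,\xi)=\partial_\xi\bar m_h$ with $\bar m_h\ge 0$, which is the kinetic formulation of \eqref{eq:LWRmodel}; equivalently, $\rho_h$ satisfies \eqref{eq:WeakEntropySolution} in $\mathcal D'$ on $(0,\infty)_t\times\Omega_h$. The regularity $\rho_h\in C([0,\infty)_t,L^1(\Omega_h,K_h))$ then follows from the $\epsilon$-uniform $L^1$ and time-translation estimates of Theorem~\ref{thm:ExistenceBGKKineticModel} (conservation of mass and the $L^1$-contraction give spatial tightness and an equicontinuity modulus in time) together with the strong $L^1_{\mathrm{loc}}$ convergence and standard scalar-conservation-law theory.

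The step I expect to be the main obstacle is the passage from the relaxation defect to an object that actually meets the hypotheses of Panov's theorem: one must verify carefully that the vanishing of $\epsilon\,\partial_\xi m_h^\epsilon$, together with the $\epsilon$-uniform bound on $m_h^\epsilon$, really produces a bona fide measure-valued entropy solution in the interior of each $\Omega_h$, so that Panov's regularizing and compactness mechanism applies arc by arc with no interference from the traces at $x=0$ or $t=0$, which are not yet controlled at this stage.
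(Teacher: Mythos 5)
Your reduction scheme up to the construction of the limit object is sound: the uniform bound on $m_h^\epsilon$ from Theorem~\ref{thm:ExistenceBGKKineticModel}~(iii) does give $\chi(\rho_{g_h^\epsilon},\cdot)-g_h^\epsilon\to 0$ in $\mathcal D'$, and identifying $\bar g_h=\int_{K_h}\chi(\lambda,\cdot)\d{\nu^h_{t,x}(\lambda)}$ with a nonnegative limit defect $\bar m_h$ shows that the Young measure $\nu^h$ is a measure-valued entropy solution in the interior of each $\Omega_h$. The gap is the step that follows: there is no theorem asserting that a \emph{single} measure-valued entropy solution of a scalar conservation law with non-degenerate flux must be concentrated. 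The statement is false: if $u_1\neq u_2$ are two entropy solutions of the same (non-degenerate) equation, then $\nu_{t,x}=\tfrac12\left(\delta_{u_1(t,x)}+\delta_{u_2(t,x)}\right)$ satisfies all Kru\v{z}kov measure-valued entropy inequalities but is not a Dirac measure. What \cite[Theorem~5]{Pa1995} provides under \ref{eq:Assumptionsfh4} is strong precompactness of bounded \emph{sequences} of measure-valued entropy solutions; it does not reduce an individual limit measure to a Dirac. Hence your argument never upgrades the weak-$*$ convergence of $\rho_{g_h^\epsilon}$ to the strong $L^1_{\mathrm{loc}}$ convergence asserted in the theorem, which is precisely its content. (A DiPerna-type reduction would instead require strong attainment of Dirac initial data by the limit Young measure, which, as you note yourself, you do not control at this stage.)

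The paper closes this gap by a different use of the same tool: it does not first pass to a limit measure, but observes that for every fixed $\epsilon$ the kinetic density $g_h^\epsilon$ already defines an \emph{exact} measure-valued entropy solution, because testing the relaxation term against Kru\v{z}kov entropies gives $\int_{K_h}\sgn(\xi-k)\bigl(\chi(\rho_{g_h^\epsilon},\xi)-g_h^\epsilon\bigr)\d{\xi}\le 0$ by the minimization/sub-differential property of $\chi$ in Lemma~\ref{lemma:BasicConvexityProperties}; the relaxation term is entropy-dissipative for each $\epsilon$, not merely of size $O(\epsilon)$. Panov's theorem is then applied to the sequence $\nu_h^\epsilon$ of these measure-valued solutions, and the resulting strong convergence immediately yields $\rho_{g_h^\epsilon}=\int_{K_h}\xi\d{\nu_h^\epsilon(\xi)}\to\rho_h$ in $L^1_{\mathrm{loc}}$; the entropy inequality for $\rho_h$ follows by the standard relaxation argument \cite[Lemma~3.3]{PeTa1991}, and the time continuity from Theorem~\ref{thm:HelpExistenceTrace} rather than from an $\epsilon$-uniform time modulus, which your sketch asserts but does not establish. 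To repair your proof while keeping its structure, replace the ``single measure-valued solution is Dirac'' step by this sequential strong-precompactness argument (or supply a genuine DiPerna-type argument with strong initial traces).
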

\begin{proof}
	Since $g_h\in L^1((0,\infty)_t\times\Omega_h\times K_h,D_\xi)$,  
	 $\nu_h^\epsilon=\partial_\xi g_h^\epsilon$ defines a measure in $\xi$ for a.e. $t\in (0,\infty),x\in\Omega_h$.
	 $\nu_h^\epsilon$ are measure--valued solutions to the LWR model in the sense of
	\begin{align*}
		&\partial_t \left(\int_{K_h}|\xi-k| \d{\nu_h^\epsilon(\xi)}\right)+\partial_x\left(\int_{K_h}\sgn(\xi-k)\,(f_h(\xi)-f_h(k))\, \d{\nu_h^\epsilon(\xi)}\right)\\
		&=\partial_t \left(\int_{K_h}\sgn(\xi-k)\, g_h^\epsilon(\xi)\d{\xi}\right)+\partial_x\left(\int_{K_h}\sgn(\xi-k)\,f_h'(\xi)\, g_h^\epsilon(\xi)\d{\xi}\right)\\
		&=\int_{K_h}\sgn(\xi-k)\,\frac{\chi(\rho_{g_h^\epsilon},\xi)-g_h^\epsilon}{\epsilon}\d\xi\le 0
	\end{align*}
	for each $k\in K_h$. 
	Furthermore, $\nu_h^\epsilon$, $\epsilon>0$ are bounded sequences of measure--valued functions since $\operatorname{supp} \nu_h^\epsilon(t,x)\subset K_h$.
	
	By \cite[Theorem~5]{Pa1995}, there exist measure--valued functions $\nu_h$ such that, after possibly taking a subsequence, $\nu_h^\epsilon\to \nu_h$ strongly in the sense of measure--valued functions. We conclude $\rho_{g_h^\epsilon}=\int_{K_h}\xi\d{\nu_h^\epsilon(\xi)}\to\int_{K_h}\xi\d{\nu_h(\xi)}= \rho_h$ in $L^1_\mathrm{loc}([0,\infty)_t\times\Omega_h)$. By standard arguments \cite[Lemma~3.3]{PeTa1991}, it can be shown that $\rho_h$ satisfies the entropy inequality \eqref{eq:WeakEntropySolution}. The time continuity follows by Theorem~\ref{thm:HelpExistenceTrace}.
\end{proof}

\subsection{Convergence at the Junction}

\begin{proposition}
	Let \ref{eq:Assumptionsfh1}--\ref{eq:Assumptionsfh4}, \ref{eq:Assumptionsetah1} and \ref{eq:DefPiM} hold. Let $g_h^\epsilon$ and $\rho_h$ be the solutions obtained in Theorem \ref{thm:ExistenceBGKKineticModel} and Theorem \ref{thm:CompactnessBGKSolutions}. Let $\mathcal K$ be as in \eqref{eq:defmathcalK}. Then, there exists a set of measure zero $\mathcal N\subset (0,\infty)$ such that 
	\begin{align*}
		&\sum_{i=1}^n G_i(\rho_i(t,0),\bar\rho_i)-\sum_{j=n+1}^{n+m}G_j(\rho_j(t,0),\bar\rho_j)\ge 0\qquad\text{for all }t\in(0,\infty)\backslash\mathcal N,
	\end{align*}
	for every $\bar\rho\in \mathcal K$, where $G_h(\rho_1,\rho_2)$ denotes the Kru\v{z}kov entropy flux defined in \eqref{eq:DefKruzkovEntropyPair}.
\end{proposition}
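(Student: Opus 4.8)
The plan is to transfer the kinetic $L^1$--contraction of Proposition~\ref{prop:EntropyInequalityJunctionWithout} from the BGK solution $g_h^\epsilon$ to the limit $\rho_h$ by comparing $g_h^\epsilon$ with the stationary kinetic solution $\bar g_h^\epsilon$ attached to $\bar\rho\in\mathcal K$ through Proposition~\ref{prop:StationaryKineticSolution}. For a.e.\ $t>0$ the trace $g_h^\epsilon(t,0,\cdot)$ taken from within $\Omega_h$ and the boundary value $\bar g_h^\epsilon(0,\cdot)$ both lie in $L^1(K_h,D_\xi)$ and satisfy the coupling condition on $\Gamma_h^{\mathrm{out}}$, so Proposition~\ref{prop:EntropyInequalityJunctionWithout} applied to this pair gives, for a.e.\ $t>0$,
\begin{equation*}
	\sum_{i=1}^{n} B_i^\epsilon(t,0-)-\sum_{j=n+1}^{n+m} B_j^\epsilon(t,0+)\ge 0,\qquad B_h^\epsilon(t,x):=\int_{K_h} f_h'(\xi)\,\bigl|g_h^\epsilon(t,x,\xi)-\bar g_h^\epsilon(x,\xi)\bigr|\d\xi,
\end{equation*}
the one--sided traces of $B_h^\epsilon$ at $x=0$ being well defined by the characteristics formula of Theorem~\ref{thm:ExistenceLinearKineticModel}(i). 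Multiplying by $\beta\in\mathcal D((0,\infty)_t)$ with $\beta\ge 0$, integrating in $t$ and sending $\epsilon\to 0$, it then suffices to control $\limsup_\epsilon\int_0^\infty\beta\,B_i^\epsilon(t,0-)\d t$ from above and $\liminf_\epsilon\int_0^\infty\beta\,B_j^\epsilon(t,0+)\d t$ from below by $\int_0^\infty\beta(t)\,G_h(\rho_h(t,0),\bar\rho_h)\d t$, using the Kru\v zkov identity $\int_{K_h}f_h'(\xi)\,|\chi(\rho,\xi)-\chi(k,\xi)|\d\xi=G_h(\rho,k)$, which follows from $|\chi(\rho,\cdot)-\chi(k,\cdot)|=\sgn(\rho-k)\,(\chi(\rho,\cdot)-\chi(k,\cdot))$ and Lemma~\ref{lemma:BasicConvexityProperties}(i).

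The interior ingredients are the following. Since $\bar g_h^\epsilon$ is a stationary BGK solution, the difference $g_h^\epsilon-\bar g_h^\epsilon$ solves the linear kinetic equation of Theorem~\ref{thm:ExistenceLinearKineticModel} with relaxation term $\chi(\rho_{g_h^\epsilon},\xi)-\chi(\rho_{\bar g_h^\epsilon},\xi)$; integrating the inequality of Theorem~\ref{thm:ExistenceLinearKineticModel}(iii) over $\xi\in K_h$ and using $\bigl|\int_{K_h}(g_h^\epsilon-\bar g_h^\epsilon)\d\xi\bigr|\le\int_{K_h}|g_h^\epsilon-\bar g_h^\epsilon|\d\xi$ together with $\int_{K_h}|\chi(\rho_1,\xi)-\chi(\rho_2,\xi)|\d\xi=|\rho_1-\rho_2|$ yields the Kru\v zkov--type inequality
\begin{equation*}
	\partial_t A_h^\epsilon+\partial_x B_h^\epsilon\le 0\quad\text{in }\mathcal D'\bigl((0,\infty)_t\times\Omega_h\bigr),\qquad A_h^\epsilon(t,x):=\int_{K_h}\bigl|g_h^\epsilon(t,x,\xi)-\bar g_h^\epsilon(x,\xi)\bigr|\d\xi.
\end{equation*}
By Theorem~\ref{thm:CompactnessBGKSolutions} together with the defect bound of Theorem~\ref{thm:ExistenceBGKKineticModel}(iii), standard kinetic arguments give $g_h^\epsilon\to\chi(\rho_h,\cdot)$ in $L^1_{\mathrm{loc}}((0,\infty)\times\Omega_h\times K_h)$, and Proposition~\ref{prop:StationaryKineticSolution} gives $\bar g_h^\epsilon\to\chi(\bar\rho_h,\cdot)$ in $L^1_{\mathrm{loc}}$; hence $A_h^\epsilon\to|\rho_h-\bar\rho_h|$ and $B_h^\epsilon\to G_h(\rho_h,\bar\rho_h)$ in $L^1_{\mathrm{loc}}((0,\infty)\times\Omega_h)$, with all sequences uniformly bounded by constants depending only on $|K_h|$ and $\|f_h'\|_\infty$. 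Moreover the limit satisfies $\partial_t|\rho_h-\bar\rho_h|+\partial_x G_h(\rho_h,\bar\rho_h)\le 0$, the Kru\v zkov entropy inequality of the entropy solution $\rho_h$ against the constant state $\bar\rho_h$, and $G_h(\rho_h,\bar\rho_h)$ admits the strong trace $G_h(\rho_h(t,0),\bar\rho_h)$ at $x=0$ by Theorem~\ref{thm:HelpExistenceTrace}(ii).

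The one--sided trace bounds then follow by a boundary--layer cut--off. On an incoming road $i$, where $\Omega_i=(-\infty,0)$, I test $\partial_t A_i^\epsilon+\partial_x B_i^\epsilon\le 0$ against $\beta(t)\,\psi_\delta(x)$ with the piecewise linear $\psi_\delta(x)=\max\{0,1+x/\delta\}$ and apply the divergence theorem for divergence--measure fields up to $x=0$ (legitimate because $B_i^\epsilon$ has a strong trace there); since $\psi_\delta(0)=1$ this gives
\begin{equation*}
	\int_0^\infty \beta(t)\,B_i^\epsilon(t,0-)\d t\le \int_0^\infty\int_{-\delta}^0 A_i^\epsilon\,\beta'\,\psi_\delta\,\d x\d t+\frac{1}{\delta}\int_0^\infty\int_{-\delta}^0 B_i^\epsilon\,\beta\,\d x\d t.
\end{equation*}
The first term is $O(\delta)$ uniformly in $\epsilon$; in the second one I let $\epsilon\to 0$ (using $B_i^\epsilon\to G_i(\rho_i,\bar\rho_i)$ in $L^1_{\mathrm{loc}}$ away from $x=0$ and the uniform bound near $x=0$) and then $\delta\to 0$ (using the strong trace of $G_i(\rho_i,\bar\rho_i)$ and dominated convergence), obtaining $\limsup_\epsilon\int_0^\infty\beta\,B_i^\epsilon(t,0-)\d t\le\int_0^\infty\beta(t)\,G_i(\rho_i(t,0),\bar\rho_i)\d t$. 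The mirror computation on an outgoing road $j$, with $\Omega_j=(0,\infty)$ and $\psi_\delta(x)=\max\{0,1-x/\delta\}$, produces the reversed bound $\liminf_\epsilon\int_0^\infty\beta\,B_j^\epsilon(t,0+)\d t\ge\int_0^\infty\beta(t)\,G_j(\rho_j(t,0),\bar\rho_j)\d t$. Taking $\limsup_{\epsilon\to 0}$ in the $t$--integrated kinetic contraction and combining these two estimates via $\limsup(X^\epsilon-Y^\epsilon)\le\limsup X^\epsilon-\liminf Y^\epsilon$ gives $\int_0^\infty\beta(t)\bigl(\sum_i G_i(\rho_i(t,0),\bar\rho_i)-\sum_j G_j(\rho_j(t,0),\bar\rho_j)\bigr)\d t\ge 0$ for every such $\beta$, hence the pointwise inequality for a.e.\ $t$ and the fixed $\bar\rho$. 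Since $\bar\rho\mapsto G_h(\rho_h(t,0),\bar\rho_h)$ is continuous and $\mathcal K$ is separable, applying this to a countable dense subset of $\mathcal K$ and taking the union of the resulting null sets produces the exceptional set $\mathcal N$ and the asserted inequality for all $\bar\rho\in\mathcal K$.

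The main obstacle is this last passage from interior convergence to the junction traces: strong $L^1_{\mathrm{loc}}$ convergence in the interior of $\Omega_h$ does not by itself transfer to $B_h^\epsilon(t,0\pm)$, and the cut--off argument only yields a one--sided bound — $\limsup\le$ on incoming roads and $\liminf\ge$ on outgoing ones. The proof goes through precisely because these one--sided inequalities pair with the matching $+/-$ signs in the kinetic contraction, which is a structural manifestation of entropy being dissipated into the junction. A secondary technical step is justifying $g_h^\epsilon\to\chi(\rho_h,\cdot)$ strongly rather than only weakly, for which one combines $\rho_{g_h^\epsilon}\to\rho_h$ in $L^1_{\mathrm{loc}}$, the defect estimate of Theorem~\ref{thm:ExistenceBGKKineticModel}(iii), and the extremality of the equilibrium functions $\chi(\rho,\cdot)$ in $D_\xi$.
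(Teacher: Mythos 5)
Your proposal is correct and rests on exactly the same pillars as the paper's proof: the stationary kinetic solutions of Proposition~\ref{prop:StationaryKineticSolution}, the kinetic junction contraction of Proposition~\ref{prop:EntropyInequalityJunctionWithout} applied to $g_h^\epsilon$ versus $\bar g_h^\epsilon$, the interior Kru\v{z}kov-type inequality obtained from Theorem~\ref{thm:ExistenceLinearKineticModel}(iii), the strong relaxation $g_h^\epsilon\to\chi(\rho_h,\cdot)$ together with $\bar g_h^\epsilon\to\chi(\bar\rho_h,\cdot)$, the macroscopic strong traces of Theorem~\ref{thm:HelpExistenceTrace}, and the countable-dense-subset argument over $\mathcal K$. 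The only difference is organizational: the paper folds the junction contraction and the passage to the macroscopic traces into one weak formulation with test functions sharing the value $\phi_h(t,0)$ (dropping the boundary terms by their sign and then concentrating the test function at $x=0$), whereas you apply the contraction to the kinetic traces pointwise in $t$ and recover the traces $G_h(\rho_h(t,0),\bar\rho_h)$ road by road through one-sided $\limsup$/$\liminf$ boundary-layer estimates --- an equivalent rearrangement of the same sign structure.
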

\begin{proof}~

	\noindent
	\underline{Step 1:} We prove that $g_h^{\epsilon}-\chi(\rho_{g_h^\epsilon},\xi)\to 0$ for a.e.~$t,x,\xi$: Take a strictly convex $\eta\colon\mathbb R\to \mathbb R$. Using the sub-differential inequality in Lemma \ref{lemma:BasicConvexityProperties} gives
	\begin{align}\label{eq:HelpProofBoundRelax1}
		 (\eta' (\xi)-\eta '(\rho_{g_h^\epsilon}))\left(g_h^{\epsilon}-\chi(\rho_{g_h^\epsilon},\xi)\right)\ge 0\quad\text{for a.e.~}t,x,\xi.
	\end{align}
	On the other hand we have
	\begin{align}\label{eq:HelpProofBoundRelax2}
	\begin{split}
	 	& \iiint_{(0,\infty)_t\times\Omega_h\times K_h} (\eta' (\xi)-\eta '(\rho_{g_h^\epsilon}))\left(g_h^{\epsilon}-\chi(\rho_{g_h^\epsilon},\xi)\right)\d t\d x\d\xi\\
	 	&=\iiint_{(0,\infty)_t\times\Omega_h\times K_h} \eta' (\xi)\left(g_h^{\epsilon}-\chi(\rho_{g_h^\epsilon},\xi)\right)\d t\d x\d\xi\\
		&=\epsilon  \iiint_{(0,\infty)_t\times\Omega_h\times K_h} \eta' (\xi)\left(\partial_t g_h^\epsilon +f_h'(\xi)\,\partial_x g_h^\epsilon\right)\d t\d x\d\xi \\
		&=\epsilon \iint_{\Omega_h\times K_h} \eta' (\xi)\left(g_h^\epsilon(T,x,\xi)-g_h^\epsilon(0,x,\xi)\right) \d x\d\xi \pm_h \epsilon  \iint_{(0,\infty)_t\times K_h} \eta' (\xi)\,f_h'(\xi)\, g_h^\epsilon(t,0,\xi)\d t\d\xi\\
		&\le C \epsilon 
	\end{split}
	\end{align}
	with a constant $C>0$ independent of $\epsilon>0$. Since $\eta$ is strictly convex, we have $\eta' (\xi)\neq \eta '(\rho_{g_h^\epsilon})$ for a.e.~$\xi\in K_h$. By \eqref{eq:HelpProofBoundRelax1}--\eqref{eq:HelpProofBoundRelax2}, we obtain $g_h^{\epsilon}-\chi(\rho_{g_h^\epsilon},\xi)\to 0$ for a.e.~$t,x,\xi$.

\noindent
\underline{Step 2:}
Fix $\bar\rho\in\mathcal K$ and let $\bar g_h^\epsilon$ by as in Proposition~\ref{prop:StationaryKineticSolution}.
Since \textit{(iii)} in Theorem~\ref{thm:ExistenceLinearKineticModel} and $|\chi(\rho_{g_h^\epsilon},\xi)-\chi(\rho_{\bar g_h^\epsilon},\xi)|\le |g_h^\epsilon-\bar g_h^\epsilon|$, we have
\begin{equation*}
	\sum_{h=1}^{n+m}\iiint_{(0,\infty)_t\times \Omega_h\times K_h} |g_h^\epsilon-\bar g_h^\epsilon|\,\partial_t\phi_h+f_h'(\xi)\,|g_h^\epsilon-\bar g_h^\epsilon|\,\partial_x\phi_h  \d t\d x\d\xi \ge 0
\end{equation*}
for $\phi_h\in \mathcal D((0,\infty)_t\times \Omega_h)$, $\phi_h\ge 0$. We aim to extend this inequality to $\phi_h \in\mathcal D((0,\infty)_t\times\mathbb R_x)$, $\phi_h\ge 0$ with $\phi_h(t,0)=\phi_{h'}(t,0)$ for all $h,h'=1,\dots,n+m$. Approximating $\phi_h \in\mathcal D((0,\infty)_t\times\mathbb R_x)$ in $(0,\infty)_t\times\Omega_h $ by a suitable sequence in $ \mathcal D((0,\infty)_t\times\mathbb R_x)$ \cite[Proposition~1]{ACD2017} and using the fact that the kinetic solutions $g_h^\epsilon, \bar g_h^\epsilon$ admit strong traces at $x=0$ leads to 
\begin{align*}
	0&\le \sum_{h=1}^{n+m}\iiint_{(0,\infty)_t\times \Omega_h\times K_h} |g_h^\epsilon-\bar g_h^\epsilon|\,\partial_t\phi_h+f_h'(\xi)\,|g_h^\epsilon-\bar g_h^\epsilon|\,\partial_x\phi_h  \d t\d x\d \xi \\
	&-\sum_{i=1}^m \iiint_{(0,\infty)_t\times K_i} f_i'(\xi)\,|g_i^\epsilon-\bar g_i^\epsilon|\,\phi_i(t,0)  \d t\d \xi
	+\sum_{j=n+1}^{n+m} \iiint_{(0,\infty)_t\times K_j} f_j'(\xi)\,|g_j^\epsilon-\bar g_j^\epsilon|\,\phi_j (t,0) \d t\d \xi\\
	&\le \sum_{h=1}^{n+m}\iiint_{(0,\infty)_t\times \Omega_h\times K_h} |g_h^\epsilon-\bar g_h^\epsilon|\,\partial_t\phi_h+f_h'(\xi)\,|g_h^\epsilon-\bar g_h^\epsilon|\,\partial_x\phi_h  \d t\d x\d \xi
\end{align*}
for every $\phi_h \in\mathcal D((0,\infty)_t\times\mathbb R_x)$, $\phi_h\ge 0$ with $\phi_h(t,0)=\phi_{h'}(t,0)$ for all $h,h'=1,\dots,n+m$. Using the fact that $g_h^\epsilon-\chi(\rho_{g_h^\epsilon},\cdot)\to 0$, $\rho_{g_h^\epsilon}\to \rho_h$ and $\bar g_h^\epsilon\to \chi(\bar\rho_h,\cdot)$ leads to 
\begin{equation*}
	\sum_{h=1}^{n+m}\iiint_{(0,\infty)_t\times \Omega_h\times K_h} |\rho_h-\bar\rho_h|\,\partial_t\phi_h+G_h(\rho_h,\bar\rho_h)\,\partial_x\phi_h  \d t\d x\d \xi \ge 0
\end{equation*}
for every $\phi_h \in\mathcal D((0,\infty)_t\times\mathbb R_x)$, $\phi_h\ge 0$ with $\phi_h(t,0)=\phi_{h'}(t,0)$ for all $h,h'=1,\dots,n+m$. We set $\phi_h(t,x)=\phi^b(t)\,\phi_{h}^{a,r}(x)$ with $\phi_h^{a,r}(x)=1$ for $|x|\le r/2$, $\phi_h^{a,r}(x)=0$ for $|x|\ge r$, $|(\phi_h^{a,r})'(x)|\le C/r$ and with $\phi^b\in\mathcal D(0,\infty)$, $\phi^b\ge 0$. We take the limit $r \to 0 $ using Lebesque's theorem together with Theorem~\ref{thm:HelpExistenceTrace} and obtain
	\begin{align*}
		&\sum_{i=1}^n\int_0^\infty G_i(\rho_i(t,0),\bar\rho_i)\,\phi^b(t)\d t-\sum_{j=n+1}^{n+m}\int_0^\infty G_j(\rho_j(t,0),\bar\rho_j)\,\phi^b(t)\d t\ge 0
	\end{align*}
	for every $\phi^b\in\mathcal D(0,\infty)$, $\phi^b\ge 0$. 
	
	\noindent
	\underline{Step 3:} We proved that the desired inequalities hold for $t\in(0,\infty)\backslash\mathcal N(\bar\rho)$ with sets of measure zero $\mathcal N(\bar\rho)$. It remains to prove that $\mathcal N(\bar\rho)$ can be chosen independent of $\bar\rho\in\mathcal K$. 
	
	We take a countable and dense subset $ \mathcal K^*$ of $\mathcal K$. We set $\mathcal N=\bigcup_{\mathcal K^*}\mathcal N(\bar\rho^k)$ and obtain for all $\bar\rho^k\in  \mathcal K^*$: 
	\begin{align*}
		&\sum_{i=1}^n G_i(\rho_i(t,0),\bar\rho_i^k)-\sum_{j=n+1}^{n+m}G_j(\rho_j(t,0),\bar\rho_j^k)\ge 0\qquad\text{for all }t\in(0,\infty)\backslash\mathcal N.
	\end{align*}
	Since the entropy flux $G_h(\rho_h(t,0),\bar\rho_h)$
	depends continuously on $\bar\rho_h$,
	we obtain the result by approximating each $\bar \rho_h\in \mathcal K$ by a sequence in $\mathcal K^*$.
\end{proof}

\begin{proposition}\label{prop:EntropyCondToRiemannSolver}
	Let \ref{eq:Assumptionsfh1}--\ref{eq:Assumptionsfh3}, \ref{eq:Assumptionsetah1} and \ref{eq:DefPiM} hold.
	Let $\rho_h\in K_h$ be such that
	\begin{align*}
		\sum_{i=1}^n G_i(\rho_i,\bar\rho_i)-\sum_{j=n+1}^{n+m} G_j(\rho_j,\bar\rho_j)\ge 0\qquad \text{for every }\bar\rho\in \mathcal K
	\end{align*}	
	 with $\mathcal K$ as in \eqref{eq:defmathcalK}, where $G_h(\rho_1,\rho_2)$ denotes the Kru\v{z}kov entropy flux defined in \eqref{eq:DefKruzkovEntropyPair}.
	 Then, 
	 \begin{align*}
		\sum_{i=1}^n G_i(\rho_i,\tilde\rho_i)-\sum_{j=n+1}^{n+m} G_j(\rho_j,\tilde\rho_j)\ge 0\qquad\text{for every }\tilde\rho_h\in K_h\text{ with }\tilde\rho=\mathcal{RP}(\tilde\rho)
	\end{align*}	
	and $\mathcal{RP}(\rho)=\rho$.
\end{proposition}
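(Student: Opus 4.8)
The plan is to establish the two assertions separately: first that $\mathcal{RP}(\rho)=\rho$, and then, using that $\rho$ is itself stationary, the Kru\v{z}kov inequality against an arbitrary $\tilde\rho$ with $\mathcal{RP}(\tilde\rho)=\tilde\rho$. For the first part I would argue by a squeeze: exhibit one distinguished element $\hat\rho\in\mathcal K$ for which the hypothesis gives ``$\ge 0$'', show that the very same quantity is ``$\le 0$'' purely from the Godunov structure of the Riemann solver, and then read off from the equality case that the solver does not move $\rho$.

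Concretely, write $\rho^\ast:=\mathcal{RP}(\rho)$ and let $M\in\mathbb R$, $z\in[0,1]$, $\hat\rho_h=\Pi^M_h(z)$ be the data furnished by Definition~\ref{def:RiemannSolver}, so that $f_h(\hat\rho_h)=f_h(\rho^\ast_h)$ for all $h$ and $\sum_{i=1}^n f_i(\rho^\ast_i)=\sum_{j=n+1}^{n+m} f_j(\rho^\ast_j)$. First I would check that $\hat\rho$, regarded as constant datum, is a fixed point of $\mathcal{RP}$: with the same $M$ and $z$ the standard Riemann problems have coinciding left and right states $\hat\rho_h$, and the mass balance for $\hat\rho$ is inherited from the one for $\rho^\ast$ since $f_h(\hat\rho_h)=f_h(\rho^\ast_h)$; hence $\mathcal{RP}(\hat\rho)=\hat\rho$, and the membership clause of \eqref{eq:defmathcalK} is satisfied because the right state associated with the datum $\hat\rho$ may again be taken equal to $\hat\rho_h$ (so that clause is automatically fulfilled, whether or not $\hat\rho_h\in\Gamma_h^{\mathrm{out}}$). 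Thus $\hat\rho\in\mathcal K$, and the hypothesis applied with $\bar\rho=\hat\rho$ yields $\sum_{i=1}^n G_i(\rho_i,\hat\rho_i)-\sum_{j=n+1}^{n+m} G_j(\rho_j,\hat\rho_j)\ge 0$. For the opposite inequality I would use $G_h(\rho_h,\hat\rho_h)=\sgn(\rho_h-\hat\rho_h)\bigl(f_h(\rho_h)-f_h(\rho^\ast_h)\bigr)$ together with \eqref{eq:HelpGodunovFluxJunction1}--\eqref{eq:HelpGodunovFluxJunction2} with $\rho_{0,h}=\rho_h$: there $f_h(\rho^\ast_h)$ is the minimum of $f_h$ on the interval with endpoints $\rho_h,\hat\rho_h$ exactly when $\hat\rho_h$ lies ``beyond'' $\rho_h$ in the orientation of those formulae and the maximum otherwise, and a two--case check of the sign of $\rho_h-\hat\rho_h$ gives $G_i(\rho_i,\hat\rho_i)\le 0$ for $i=1,\dots,n$ and $G_j(\rho_j,\hat\rho_j)\ge 0$ for $j=n+1,\dots,n+m$. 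Summing, $\sum_i G_i(\rho_i,\hat\rho_i)-\sum_j G_j(\rho_j,\hat\rho_j)\le 0$, so this quantity is $0$ and, by the definite sign of each summand, $G_h(\rho_h,\hat\rho_h)=0$, i.e.\ $f_h(\rho_h)=f_h(\rho^\ast_h)$, for every $h$. A final inspection of \eqref{eq:HelpGodunovFluxJunction1}--\eqref{eq:HelpGodunovFluxJunction2} and of the wave fan of the standard Riemann problem (the fan is monotone between the two states, and affine stretches of $f_h$ propagate at zero speed) shows that $f_h(\rho_h)=f_h(\rho^\ast_h)$ is possible only if $\rho^\ast_h=\rho_h$; hence $\mathcal{RP}(\rho)=\rho$.

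For the remaining assertion I would use that $\mathcal{RP}(\rho)=\rho$ makes each $\rho_h$ a trace of a \emph{stationary} standard Riemann problem visible in $\Omega_h$, and such a trace lies on the monotone branch $\Gamma_h^{\mathrm{out}}$ (the degenerate case where $\rho_h$ sits in an affine stretch of $f_h$ is harmless and handled separately); the same is true for any $\tilde\rho$ with $\mathcal{RP}(\tilde\rho)=\tilde\rho$. Since $f_i$ is nondecreasing on $\Gamma_i^{\mathrm{out}}$ and $f_j$ is nonincreasing on $\Gamma_j^{\mathrm{out}}$, this gives $G_i(\rho_i,\tilde\rho_i)=|f_i(\rho_i)-f_i(\tilde\rho_i)|\ge 0$ and $G_j(\rho_j,\tilde\rho_j)=-|f_j(\rho_j)-f_j(\tilde\rho_j)|\le 0$, hence $\sum_{i=1}^n G_i(\rho_i,\tilde\rho_i)-\sum_{j=n+1}^{n+m} G_j(\rho_j,\tilde\rho_j)\ge 0$, which is exactly what is claimed.

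The step I expect to be the main obstacle is the bookkeeping concealed in ``$\hat\rho\in\mathcal K$'' and in the rigidity statement $f_h(\rho_h)=f_h(\rho^\ast_h)\Rightarrow \rho^\ast_h=\rho_h$. The solver data $\hat\rho_h$ need not be uniquely determined by Definition~\ref{def:RiemannSolver} and need not a priori lie in $\Gamma_h^{\mathrm{out}}$, so one must argue, via \eqref{eq:HelpGodunovFluxJunction1}--\eqref{eq:HelpGodunovFluxJunction2} and the reduction used in the proof of Proposition~\ref{prop:RelationRiemannProblemMaximumEntropy}, that every admissible choice of the right state associated with the datum $\hat\rho$ is compatible with the membership clause of \eqref{eq:defmathcalK}; and since \ref{eq:Assumptionsfh4} is not assumed in this proposition, the wave--fan analysis underlying the rigidity and the ``$\Gamma_h^{\mathrm{out}}$--branch'' claim must be carried out with the affine--flux cases treated by hand. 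These points are routine but delicate, and everything else reduces to the sign computations indicated above and to Proposition~\ref{Prop:ExistenceRiemannProblem}.
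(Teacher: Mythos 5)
Your argument breaks at the very first step: the distinguished state $\hat\rho$ is in general \emph{not} an element of $\mathcal K$, and not even a fixed point of $\mathcal{RP}$. The mass-conservation constraint in Definition~\ref{def:RiemannSolver} is imposed on the traces $\rho_h(t,0)$, not on the auxiliary states $\hat\rho_h$, and the identity $f_h(\hat\rho_h)=f_h(\rho^\ast_h)$ on which you base both the membership claim and the squeeze is false in general: in the example of Section~6 with $\hat\eta_h=-f_h$ one has $\hat\rho_1=0$, so $f_1(\hat\rho_1)=0$ while $f_1(\rho_1(t,0))=1/2$, and moreover $\sum_i f_i(\hat\rho_i)-\sum_j f_j(\hat\rho_j)=-1\neq 0$, so $\mathcal{RP}(\hat\rho)\neq\hat\rho$. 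Consequently the hypothesis cannot be applied with $\bar\rho=\hat\rho$, and the formula $G_h(\rho_h,\hat\rho_h)=\sgn(\rho_h-\hat\rho_h)\bigl(f_h(\rho_h)-f_h(\rho^\ast_h)\bigr)$ used for the reverse inequality is also invalid. The second half has an independent gap: it is not true that every fixed point of $\mathcal{RP}$ has all components in $\Gamma_h^{\mathrm{out}}$ (free-flow stationary states have outgoing-road components in $\Gamma_j^{\mathrm{in}}$, e.g.\ $\rho_3(t,0)=3/16\in\Gamma_3^{\mathrm{in}}$ in Table~\ref{tab:TracesRiemannProblemExample1T1}), so the termwise sign argument collapses: two fixed points sharing an outgoing road in free flow with different fluxes contribute $-G_j(\rho_j,\tilde\rho_j)=-\lvert f_j(\rho_j)-f_j(\tilde\rho_j)\rvert<0$, and the inequality only holds in the aggregate. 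Note also that your second half never uses the hypothesis, whereas the statement genuinely needs it; if a purely structural termwise argument worked, Proposition~\ref{prop:ContractionPropertyJunction} would be trivial, which it is not.

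The paper's proof proceeds in the opposite order, and this order is essential. First, for an arbitrary fixed point $\tilde\rho$ it constructs $\bar\rho\in\mathcal K$ by replacing $\tilde\rho_h$ with $\hat\rho_h$ exactly when $\hat\rho_h\in\Gamma_h^{\mathrm{out}}$ and $f_h(\tilde\rho_h)=f_h(\hat\rho_h)$ (with $\hat\rho_h$ the solver data for the datum $\tilde\rho$), and checks the monotonicity relations $G_i(\rho_i,\bar\rho_i)\le G_i(\rho_i,\tilde\rho_i)$ and $G_j(\rho_j,\bar\rho_j)\ge G_j(\rho_j,\tilde\rho_j)$, so the hypothesis on $\mathcal K$ transfers to all fixed points. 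Only then does it prove $\mathcal{RP}(\rho)=\rho$, by testing this extended inequality against the traces $\rho^\ast=\mathcal{RP}(\rho)$, where the correct sign computation $G_i(\rho_i,\rho_i^\ast)=-\lvert f_i(\rho_i)-f_i(\rho_i^\ast)\rvert$ and $G_j(\rho_j,\rho_j^\ast)=\lvert f_j(\rho_j)-f_j(\rho_j^\ast)\rvert$ is available because $\rho^\ast$ is the Godunov trace of the Riemann problem started from $\rho$ (not the auxiliary state $\hat\rho$), forcing $f_h(\rho_h)=f_h(\rho_h^\ast)$ and hence $\rho^\ast=\rho$. To repair your proposal you would have to replace your distinguished state by a member of $\mathcal K$ obtained through this kind of modification and reinstate the hypothesis in the second half; as written, both halves fail.
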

\begin{proof}
	We follow the arguments in \cite[Lemma~2.8]{ACD2017}.
	Take $\tilde\rho$ with $\tilde\rho=\mathcal{RP}(\tilde\rho)$. Then, we define $\bar\rho\in \mathcal K$ by
	\begin{equation*}
		\bar\rho_h=
		\begin{cases}
			\hat\rho_h\quad&\text{if }\hat\rho_h\in\Gamma_h^\mathrm{out},f_h(\tilde\rho_h)=f_h(\hat\rho_h), \\
			\tilde\rho_h\quad &\text{else},
		\end{cases}
	\end{equation*}
	with $\hat\rho_h$ as in Definition~\ref{def:RiemannSolver} with initial data $\rho_{0,h}=\tilde\rho_h$.
	The first part follows from
	\begin{alignat*}{3}
		G_i(\rho_i,\bar\rho_i)&=\sgn(\rho_i-\bar\rho_i)(f_i(\rho_i)-f_i(\bar\rho_i))&&\le \sgn(\rho_i-\tilde\rho_i)(f_i(\rho_i)-f_i(\tilde\rho_i))&&=G_i(\rho_i,\tilde\rho_i),\\
		G_j(\rho_j,\bar\rho_j)&=\sgn(\rho_j-\bar\rho_j)(f_j(\rho_j)-f_j(\bar\rho_j))&&\ge \sgn(\rho_j-\tilde\rho_j)(f_j(\rho_j)-f_j(\tilde\rho_j))&&=G_j(\rho_j,\tilde\rho_j).
	\end{alignat*}

	For showing $\mathcal{RP}(\rho)=\rho$, we solve the generalized Riemann problem with initial data $\rho_{0,h}=\rho_h$ and denote the traces at the junction by $\rho^*_h=\rho_h(t,0)$. We have
	\begin{align*}
		\sum_{i=1}^n G_i(\rho_i,\rho_i^*)-\sum_{j=n+1}^{n+m} G_j(\rho_j,\rho_j^*)\ge 0.
	\end{align*}
	On the other hand we have
	\begin{equation*}
		f_i(\rho_i)-f_i(\rho_i^*)
		\begin{cases}
			\ge 0\quad &\text{if }\rho_i\le \hat\rho_i,\\
			\le 0\quad &\text{if }\rho_i\ge \hat\rho_i,\\
		\end{cases}
	\end{equation*}
	where $\hat\rho_i$ is as in Definition~\ref{def:RiemannSolver} with initial data $\rho_{0,h}=\rho_h$. Therefore, $G_i(\rho_i,\rho_i^*)=-|f_i(\rho_i)-f_i(\rho_i^*)|$ for $i=1,\dots,n$. By the same arguments we get $G_j(\rho_j,\rho_j^*)=|f_j(\rho_j)-f_j(\rho_j^*)|$ for $j=n+1,\dots,n+m$. We conclude that
	\begin{align*}
		0\le \sum_{i=1}^n G_i(\rho_i,\rho_i^*)-\sum_{j=n+1}^{n+m} G_j(\rho_j,\rho_j^*)\le 0.
	\end{align*}
	Since all summands have the same sign, we have $f_h(\rho_h)=f_h(\rho_h^*)$ for $h=1,\dots,n+m$. The result follows from the construction of $\rho_h^*$ by standard Riemann problems.
\end{proof}

\subsection{Uniqueness and \texorpdfstring{$\bf{L^1}$}{L1}--contraction property}
\begin{proposition}\label{prop:Uniqueness}
	Let \ref{eq:Assumptionsfh1}--\ref{eq:Assumptionsfh3}, \ref{eq:Assumptionsetah1} and \ref{eq:DefPiM} hold. The solutions $\rho_h$ to Definition~\ref{def:CauchyProblemMacro} are unique. For initial data $\rho_{0,h}^s\in L^1(\Omega_h,K_h)$, $s=1,2$, the contraction property
	\begin{align*}
		\sum_{h=1}^{n+m}\int_{\Omega_h}|\rho_{h}^1(T)-\rho_{h}^2(T)|\d x\le \sum_{h=1}^{n+m}\int_{\Omega_h}|\rho_{0,h}^1-\rho_{0,h}^2|\d x 
	\end{align*}
	holds for every $T\ge 0$.
\end{proposition}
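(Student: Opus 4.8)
The plan is to reduce everything to the $L^1$--contraction estimate, since uniqueness is exactly the case of equal initial data. Fix two solutions $(\rho_1^s,\dots,\rho_{n+m}^s)$, $s=1,2$, in the sense of Definition~\ref{def:CauchyProblemMacro} with initial data $\rho_{0,h}^s\in L^1(\Omega_h,K_h)$. The strategy is the classical Kru\v{z}kov doubling of variables applied road by road, combined with the junction estimate of Proposition~\ref{prop:ContractionPropertyJunction} to absorb, with the favourable sign, the boundary flux terms that appear at $x=0$.

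\emph{Step 1 (Kato inequality up to the junction).} Doubling of variables between the entropy inequalities of Definition~\ref{def:CauchyProblemMacro} for $\rho_h^1$ and $\rho_h^2$ gives, in the interior,
\[
  \partial_t|\rho_h^1-\rho_h^2|+\partial_x G_h(\rho_h^1,\rho_h^2)\le 0\qquad\text{in }\mathcal D'\big((0,\infty)_t\times\Omega_h\big),
\]
with $G_h(\cdot,\cdot)$ the Kru\v{z}kov entropy flux from \eqref{eq:DefKruzkovEntropyPair}. I would then test this inequality (and the entropy inequalities incorporating the initial data) against $\phi(t,x)=\phi^b(t)\,\psi_h^r(x)\,\theta_h^R(x)$, where $\psi_h^r$ cuts off an $r$--neighbourhood of $x=0$ inside $\Omega_h$, $\theta_h^R$ cuts off $|x|\ge R$, and $\phi^b\in\mathcal D([0,\infty)_t)$, $\phi^b\ge 0$. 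Letting $r\to 0$, the term $G_h(\rho_h^1,\rho_h^2)\,\partial_x\psi_h^r$ concentrates at $x=0$ and converges, thanks to the strong $L^1_{\mathrm{loc}}$ traces at $x=0$ from Theorem~\ref{thm:HelpExistenceTrace}, to the boundary flux; letting $R\to\infty$ uses $\rho_h^1-\rho_h^2\in L^1(\Omega_h)$ together with $|G_h(\rho_h^1,\rho_h^2)|\le\mathrm{Lip}(f_h)\,|\rho_h^1-\rho_h^2|$. This yields, for every $\phi^b\in\mathcal D([0,\infty)_t)$, $\phi^b\ge 0$,
\begin{multline*}
  \sum_{h=1}^{n+m}\int_0^\infty\!\!\int_{\Omega_h}|\rho_h^1-\rho_h^2|\,(\phi^b)'(t)\d x\d t
  +\sum_{h=1}^{n+m}\int_{\Omega_h}|\rho_{0,h}^1-\rho_{0,h}^2|\,\phi^b(0)\d x\\
  \ge \int_0^\infty\Big(\sum_{i=1}^n G_i(\rho_i^1(t,0),\rho_i^2(t,0))-\sum_{j=n+1}^{n+m} G_j(\rho_j^1(t,0),\rho_j^2(t,0))\Big)\phi^b(t)\d t.
\end{multline*}

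\emph{Step 2 (absorbing the junction term and conclusion).} By the coupling condition in Definition~\ref{def:CauchyProblemMacro} we have $\mathcal{RP}(\rho^s(t,0))=\rho^s(t,0)$ for a.e.\ $t>0$ and $s=1,2$, so Proposition~\ref{prop:ContractionPropertyJunction} shows that the bracketed quantity on the right-hand side is $\ge 0$ for a.e.\ $t>0$; hence the whole right-hand side is nonnegative. Writing $F(t):=\sum_{h=1}^{n+m}\|\rho_h^1(t)-\rho_h^2(t)\|_{L^1(\Omega_h)}$, the estimate says that $F$ is (essentially) nonincreasing on $(0,\infty)$ and $\limsup_{t\to 0+}F(t)\le F(0):=\sum_h\|\rho_{0,h}^1-\rho_{0,h}^2\|_{L^1(\Omega_h)}$; using the time-continuity of entropy solutions this gives $F(T)\le F(0)$ for every $T\ge 0$, which is the claimed contraction. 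Taking $\rho_{0,h}^1=\rho_{0,h}^2$ forces $F\equiv 0$, i.e.\ uniqueness.

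\emph{Main obstacle.} The contraction algebra in Step~2 is routine; the delicate point is Step~1, namely upgrading the interior Kato inequality to one valid up to the junction with exactly the boundary flux $G_h(\rho_h^1(t,0),\rho_h^2(t,0))$. This rests on the strong $L^1_{\mathrm{loc}}$ trace result of Theorem~\ref{thm:HelpExistenceTrace} and on identifying the trace prescribed in Definition~\ref{def:CauchyProblemMacro} with the one to which Proposition~\ref{prop:ContractionPropertyJunction} applies — i.e.\ checking that all entropy fluxes (in particular the Kru\v{z}kov ones) of the two candidate traces agree, so that the junction estimate may legitimately be invoked. One must also take the limits $r\to 0$ and $R\to\infty$ in this order, so that the truncation at $|x|=\infty$ does not interfere with the sign of the junction term.
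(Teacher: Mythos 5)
Your argument is correct and follows essentially the same route as the paper: a Kru\v{z}kov doubling-of-variables (Kato) inequality on each road, extension of the test functions up to the junction using the strong traces of Theorem~\ref{thm:HelpExistenceTrace}, and absorption of the boundary term with the favourable sign via the fixed-point coupling condition and Proposition~\ref{prop:ContractionPropertyJunction}. Your explicit cutoff construction ($r\to 0$, then $R\to\infty$) is just a concrete version of the paper's ``approximation by test functions not necessarily zero at the junction,'' so no further changes are needed.
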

\begin{proof}
	The uniqueness proof is based on Kru\v{z}kov estimates. We follow \cite[Proposition~1]{ACD2017}.
	Assume that we have two solutions $\rho_h^s$, $s=1,2$ to Definition~\ref{def:CauchyProblemMacro}. Since $\rho_h^s$ are entropy solutions to \eqref{eq:LWRmodel}, we obtain by standard doubling of variable arguments
	\begin{align*}
		-\iint_{(0,\infty)_t\times\Omega_h }|\rho_h^1-\rho_h^2|\,\partial_t\phi_h+G_h(\rho_h^1,\rho_h^2)\,\partial_x\phi_h\d t\d t
		-\int_{\Omega_h}|\rho_{0,h}^1-\rho_{0,h}^2|\,\phi_h(0,x)\d x\le 0
	\end{align*}
	for every $\phi_h \in\mathcal D(\mathbb R_t\times\Omega_h)$, $\phi_h\ge 0$. 
	The solutions $\rho_h^s$ satisfy the coupling condition $\mathcal{RP}(\rho(t,0))=\rho(t,0)$ for a.e.~$t>0$ and therefore also the $L^1$--contraction property proven in Proposition~\ref{prop:ContractionPropertyJunction} and Proposition \ref{prop:EntropyCondToRiemannSolver}. An approximation by test functions $\phi_h$ which are not necessarily zero at the junction leads to
	\begin{align*}
		-\sum_{h=1}^{n+m}\iint_{(0,\infty)_t\times\Omega_h }|\rho_h^1-\rho_h^2|\,\partial_t\phi_h+G_h(\rho_h^1,\rho_h^2)\,\partial_x\phi_h\d t\d x
		-\sum_{h=1}^{n+m}\int_{\Omega_h}|\rho_{0,h}^1-\rho_{0,h}^2|\,\phi_h(0,x)\d x\le 0
	\end{align*}
	for every $\phi_h \in\mathcal D(\mathbb R_t\times\mathbb R_x)$, $\phi_h\ge 0$ with $\phi_h(t,0)=\phi_{h'}(t,0)$ for all $h,h'=1,\dots,n+m$. The contraction property follows by a smooth approximation of $\phi_h(0,x)=\mathbf{1}_{\{t\le T\}}(t,x)$. The uniqueness follows as in \cite{Kr1970}. 
\end{proof}

Theorem~\ref{thm:MainResult} follows from the results in this section.

\section{Numerical Examples for Different Choices of \texorpdfstring{$\hat\eta_h$}{the Entropies}}

We study the solution at the junction with different entropies $\hat\eta_h$. As first example \eqref{eq:SameEntropyHatEta} the same strictly convex entropy $\hat\eta$ for every road $\hat\eta_h=\hat\eta$ is chosen. 
The coupling condition coincides with the one obtained by a vanishing viscosity approach \cite{ACD2017,CoGa2010} and it is independent of the shape of $\hat\eta$.

As second example \eqref{eq:FluxEntropyHatEta} the negative flux functions
	$\hat\eta_h=-f_h$ are considered. The coupling condition can be interpreted such that the drivers maximize their velocity at the junction.

For constant initial data, $n=m=2$ and $K_h=[0,1]$, we consider piece-wise linear flux functions $f_h\colon [0,1]\to [0,1]$
\begin{align*}
	f_1(v)=f_2(v)&=
	\begin{cases}
		2 v,		\quad & 0\le v\le 1/2,\\
		2-2v,	\quad & 1/2\le v\le 1,
	\end{cases}\\
	f_3(v)&=
	\begin{cases}
		4 v	,	\quad & 0\le v\le 1/4,\\
		4/3-4v/3,	\quad &  1/4\le v\le 1,
	\end{cases}\\
	f_4(v)&=
	\begin{cases}
		4v/3,		\quad & 0\le v\le 3/4,\\
		4-4v,	\quad & 3/4\le v\le 1,
	\end{cases}
\end{align*}
illustrated in Figure~\ref{fig:ExampleFluxes}.
The initial data is chosen such that the characteristics speeds are positive:
\begin{align}\label{eq:ExampleInitialData1}
	\rho_{0,i}=1/4\in \Gamma_i^\mathrm{in},i=1,2\qquad\text{and}\qquad
	\rho_{0,j}=0\in \Gamma_j^\mathrm{out},j=3,4.
\end{align}
Solving the Riemann problem with \eqref{eq:SameEntropyHatEta} and \eqref{eq:FluxEntropyHatEta} leads then to qualitatively different solutions. The boundary traces at the junction are given in Table~\ref{tab:TracesRiemannProblemExample1T1}.

\begin{figure}
\begin{center}
	\includegraphics[width=0.35\textwidth]{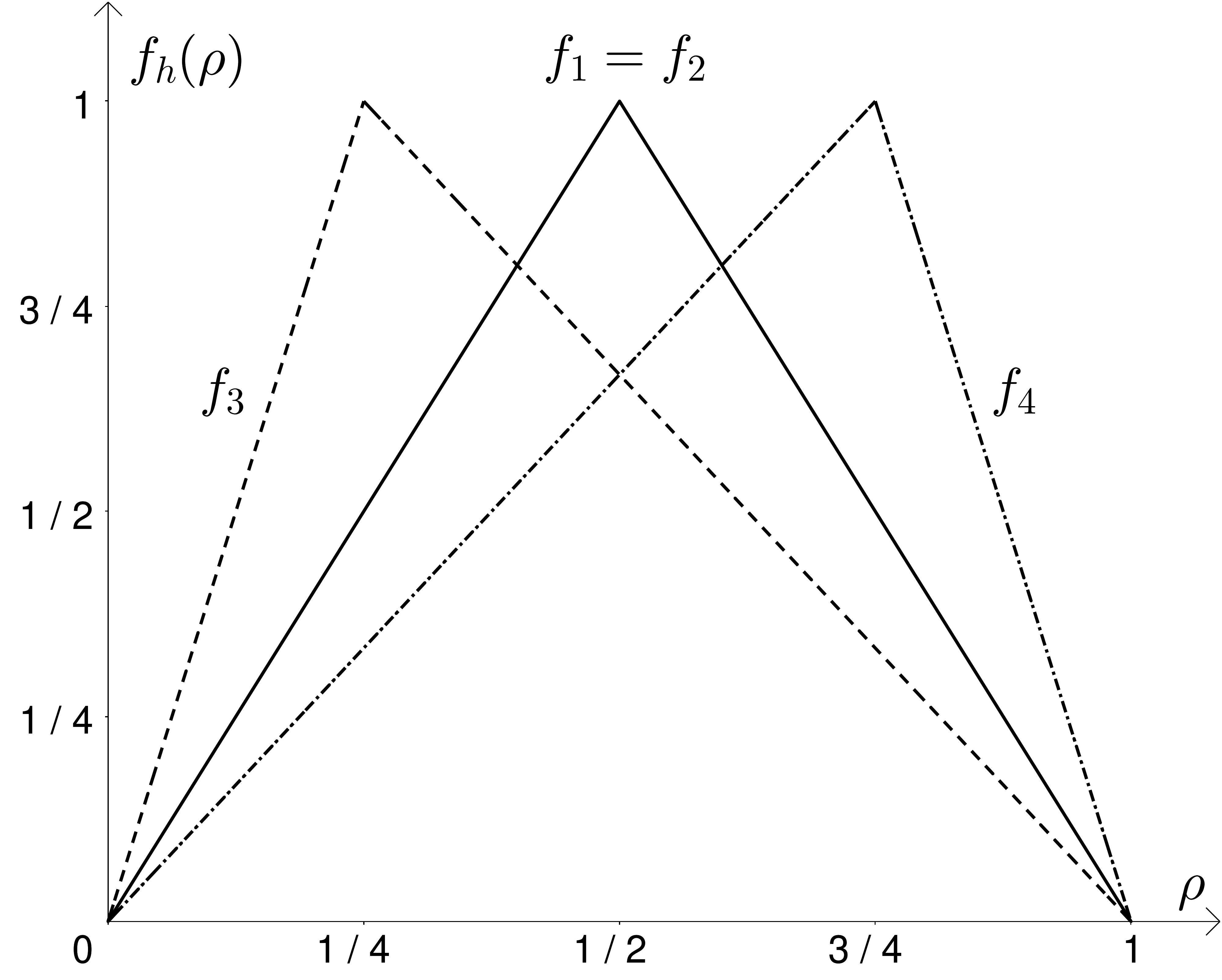}
	\vspace{-0.2cm}
	\caption{The flux functions $f_1,\dots,f_4$.\label{fig:ExampleFluxes}}
\end{center}
\end{figure}

\subsection{Interpretation}
Since we consider free flow initial data, we expect to have one degree of freedom for each outgoing road $j=3,4$. These degrees of freedom are specified by the coupling condition.

In the first example \eqref{eq:SameEntropyHatEta}, there is no prioritization of one of the outgoing roads $j=3,4$ since the entropies $\hat\eta_h$ coincide. Therefore, the number of outgoing cars per unit $\rho_j(t,0)$ is the same for $j=3,4$. 
This example could be used to model traffic junctions where the ratio between the number of cars turning into each outgoing road can be controlled by e.g. traffic lights.

In the example \eqref{eq:FluxEntropyHatEta} we observe that the third road is prioritized since
\begin{equation*}
	\hat\eta_3'(\rho)=-f_3'(\rho)=-4<-4/3=-f_4'(\rho)=\hat\eta_4'(\rho)	\qquad\text{for sufficiently small }\rho>0.
\end{equation*}
Therefore, all cars turn into the third road. The condition may be used to model traffic junctions where drivers choose the outgoing road with less traffic. The drivers choose then the outgoing road such that the distance to the next car in the road is the largest.

\begin{table}
\begin{center}	
	\begin{tabular}{|c|c|c|c|}
	\multicolumn{4}{c}{$\hat\eta_h=\hat\eta$}\\
	\hline
			$\ h\ $	&	$\ \rho_h(t,0)\ $  &	$\ f_h(\rho_h(t,0))\ $ & $\hat\rho_h$\\
	\hline
			$1$	&	$1/4$	&	$1/2$ &	$\ 3/16\ $\\
	\hline
			$2$	&	$1/4$	&	$1/2$ &	$3/16$\\
	\hline
			$3$	&	$3/16$	&	$3/4$ &	$3/16$\\
	\hline
			$4$	&	$3/16$	&	$1/4$ &	$3/16$\\	
	\hline
	\end{tabular}
	\qquad\qquad
	\begin{tabular}{|c|c|c|c|}
	\multicolumn{4}{c}{$\hat\eta_h=-f_h$}\\
	\hline
			$\ h\ $	&		$\ \rho_h(t,0)\ $ &	$\ f_h(\rho_h(t,0))\ $ & $\hat\rho_h$ \\
	\hline
			$1$	&		$1/4$	&	$1/2$ & $0$\\
	\hline
			$2$	&		$1/4$	&	$1/2$ & $0$\\
	\hline
			$3$	&		$1/4$	&	$1$ & $\ 1/4\ $ \\
	\hline
			$4$	&		$0$	&	$0$ & $0$\\	
	\hline
	\end{tabular}
	\vspace{0.2cm}
	\caption{Boundary traces of the solutions to the generalized Riemann problem with initial data \eqref{eq:ExampleInitialData1} and different $\hat\eta_h$. The density $\hat\rho_h$ is as in Definition~\ref{def:RiemannSolver}. \label{tab:TracesRiemannProblemExample1T1}}
\end{center}
\end{table}

%
%
%

\section{Outlook}

\subsection{Drivers Preferences at the Junction}

In the presented considerations we assume that the drives do not have a priori known preferences for the outgoing roads. Coupling conditions with known drivers preferences were intensively studied, see e.g. \cite{GHP2016} and the references therein. 

Typically, instead of using \eqref{eq:MassConservationJunction}, a stronger condition is used: coefficients $\alpha_{ij}\in [0,1]$ satisfying  
	\begin{equation}
		\sum_{j=n+1}^{n+m} \alpha_{ij}=1\quad\text{for }i=1,\dots,n,
	\end{equation}
	are given and the condition
	\begin{equation}\label{eq:TrafficDistributionMacro}
		\sum_{i=1}^n \alpha_{ij}\, f_i(\rho_i(t,0))=f_j(\rho_j(t,0))\qquad \text{for a.e. }t>0,
	\end{equation}
	is introduced. 
	Condition \eqref{eq:TrafficDistributionMacro} can be easily included in Definition~\ref{def:MacroMaximumEntropyProb} and its kinetic version is then given by 	
	\begin{align}
	\begin{split}
		&\sum_{i=1}^{n}\alpha_{ij}\left( \int_{\Gamma_i^{\mathrm{out}}}f'_i(\xi)\, \Psi_i[g(\Gamma^{\mathrm{in}})](\xi)\d{\xi}+\int_{\Gamma_i^{\mathrm{in}}}f'_i(\xi)\, g_i(\xi)\d{\xi}\right)\\
		&= \int_{\Gamma_j^{\mathrm{out}}}f'_j(\xi)\, \Psi_j[g(\Gamma^{\mathrm{in}})](\xi)\d{\xi}+\int_{\Gamma_j^{\mathrm{in}}}f'_j(\xi)\, g_j(\xi)\d{\xi},\quad\text{for } j=n+1,\dots,n+m.
	\end{split}
	\end{align}
	The choice $\hat\eta_h=\operatorname{Id}$ gives the condition in \cite{CGP2005}. It would be interesting to see if other conditions \cite{BrNo2017,DGP2018} can also be obtained by using suitable entropies.	We expect that the obtained new conditions however do not satisfy the $L^1$--contraction property in the sense of Proposition~\ref{prop:ContractionPropertyJunction} and Proposition~\ref{prop:EntropyInequalityJunctionWithout}.

\subsection{\texorpdfstring{$\bf L^1$}{L1}--dissipative solver}
The notion of $L^1$--dissipative solvers is a concept originating from scalar conservation laws with discontinuous flux \cite{AKR2011}. The concept was extended to a network in \cite{ACD2017}. $L^1$--dissipative solvers on networks are  those Riemann solvers satisfying the junction condition in Proposition~\ref{prop:ContractionPropertyJunction}.

In the classical setting of discontinuous fluxes, i.e.~$n=m=1$, the $L^1$--dissipative solvers are characterized by
 $(A,B)\in \Gamma_1^\mathrm{out}\times \Gamma_2^\mathrm{out}$ with $f_1(A)=f_2(B)=s_{(A,B)}$, see \cite{AMG2005,BKT2009}. They are defined by: $\rho=\mathcal{RP}(\rho)$ if and only if 
 \begin{equation}\label{eq:HelpDiscontFluxes1}
 	f_1(\rho_1)=f_2(\rho_2)\le s_{(A,B)}\quad\text{and}\quad
 	\sgn_+(\rho_1-A)\,\sgn_-(\rho_2-B)\le 0.
 \end{equation}
 
Definition~\ref{def:RiemannSolver} and \eqref{eq:HelpDiscontFluxes1} are related in the following way:
If there exists $M\in\mathbb R$, $z\in [0,1]$ such that
	\begin{equation}
		\hat\rho_h=\Pi^M_h(z)\in \Gamma_h^\mathrm{out}, i=1,2\quad\text{and}\quad f_1(\hat\rho_1)=f_2(\hat\rho_2),
	\end{equation}
then Definition~\ref{def:RiemannSolver} and \eqref{eq:HelpDiscontFluxes1} coincide with $s_{(A,B)}=f_1(\hat\rho_1)=f_2(\hat\rho_2)$, $A=\hat\rho_1$ and $B=\hat\rho_2$. Notice that such $M\in\mathbb R$ and $z\in [0,1]$ do not exist if \ref{eq:Assumptionsetah2} holds.
If such $M\in\mathbb R$, $z\in [0,1]$ do not exist, Definition~\ref{def:RiemannSolver} and \eqref{eq:HelpDiscontFluxes1} coincide with $s_{(A,B)}=\max\{f_1^{\max},f_2^{\max}\}$.

Therefore, Theorem~\ref{thm:MainResult} gives an existence and uniqueness result for the LWR model with discontinuous flux for all $L^1$--dissipative solvers.

It would be interesting to see if \eqref{eq:HelpDiscontFluxes1} and the characterisation of all $L^1$--dissipative solvers can be extended to networks. The new formulation has to take into account the prioritization at the junction which is a phenomena that does not occur for $n=m=1$. Inspired by \eqref{eq:HelpDiscontFluxes1} and for notational convenience, we may replace  
	\begin{equation}\label{eq:HelpL1DissipativSolverGeneralEta}
		\hat\eta_h'(\hat\rho_h-)\le M\le \hat\eta_h'(\hat\rho_h+)\quad\text{and}\quad 
		\hat\rho_h=\Pi^M_h(z)\quad\text{for }M\in\mathbb R,z\in [0,1],
	\end{equation}
by a general, more explicit condition on $\hat \rho_h$. 
	We define a function $\Xi\colon [0,1]\to \bigtimes_{h=1}^{n+m} K_h$ and require
	\begin{equation}\label{eq:HelpL1DissipativSolverGeneralXi}
		\hat\rho_h=\Xi_h(y)\qquad\text{for a suitable }y\in [0,1].
	\end{equation} 
	Note that by the intermediate value argument in Proposition~\ref{Prop:ExistenceRiemannProblem}, an increasing, surjective function $\Xi\colon [0,1]\to \bigtimes_{h=1}^{n+m} K_h$ as in \eqref{eq:HelpL1DissipativSolverGeneralXi} can be constructed from \eqref{eq:HelpL1DissipativSolverGeneralEta}.

\subsection*{Acknowledgements}
	The author would like to thank Michael Herty and Michael Westdickenberg for fruitful discussions and for helpful suggestions to improve the manuscript.

\bibliographystyle{plain}
\bibliography{99_Bibliography}

\begin{thebibliography}{10}

\bibitem{AMG2005}
Adimurthi, S.~Mishra, and G.~D.~V. Gowda.
\newblock Optimal entropy solutions for conservation laws with discontinuous flux-functions.
\newblock {\em J. Hyperbolic Differ. Equ.}, 2(4):783--837, 2005.

\bibitem{AKR2011}
B.~Andreianov, K.~H. Karlsen, and N.~H. Risebro.
\newblock A theory of {$L^1$}-dissipative solvers for scalar conservation laws with discontinuous flux.
\newblock {\em Arch. Ration. Mech. Anal.}, 201(1):27--86, 2011.

\bibitem{ACD2017}
B.~P. Andreianov, G.~M. Coclite, and C.~Donadello.
\newblock Well-posedness for vanishing viscosity solutions of scalar conservation laws on a network.
\newblock {\em Discrete Contin. Dyn. Syst.}, 37(11):5913--5942, 2017.

\bibitem{An1983}
G.~Anzellotti.
\newblock {Pairings between measures and bounded functions and compensated compactness}.
\newblock {\em Ann. Mat. Pura Appl.}, 135:293--318, 1983.

\bibitem{BKKP2016}
R.~Borsche, J.~Kall, A.~Klar, and T.~N.~H. Pham.
\newblock Kinetic and related macroscopic models for chemotaxis on networks.
\newblock {\em Math. Models Methods Appl. Sci.}, 26(6):1219--1242, 2016.

\bibitem{BoKl2018}
R.~Borsche and A.~Klar.
\newblock Kinetic layers and coupling conditions for macroscopic equations on networks {I}: {T}he wave equation.
\newblock {\em SIAM J. Sci. Comput.}, 40(3):A1784--A1808, 2018.

\bibitem{BoKl2018Scalar}
R.~Borsche and A.~Klar.
\newblock {Kinetic layers and coupling conditions for nonlinear scalar equations on networks}.
\newblock {\em Nonlinearity}, 31(7):3512--3541, 2018.

\bibitem{BKKM2014}
R.~Borsche, A.~Klar, S.~K\"{u}hn, and A.~Meurer.
\newblock Coupling traffic flow networks to pedestrian motion.
\newblock {\em Math. Models Methods Appl. Sci.}, 24(2):359--380, 2014.

\bibitem{Br2000}
A.~Bressan.
\newblock {The One-Dimensional {C}auchy Problem}.
\newblock In {\em {Hyperbolic Systems of Conservation Laws}}, volume~20 of {\em Oxford Lecture Series in Mathematics and its Application}. Oxford University Press, Oxford, 2000.

\bibitem{BCGHP2014}
A.~Bressan, S.~{\v{C}}ani{\'{c}}, M.~Garavello, M.~Herty, and B.~Piccoli.
\newblock {Flows on networks: recent results and perspectives}.
\newblock {\em EMS Surv. Math. Sci.}, 1(1):47--111, 2014.

\bibitem{BrNo2017}
A.~Bressan and A.~Nordli.
\newblock The {R}iemann solver for traffic flow at an intersection with buffer of vanishing size.
\newblock {\em Netw. Heterog. Media}, 12(2):173--189, 2017.

\bibitem{BKT2009}
R.~B\"{u}rger, K.~H. Karlsen, and J.~D. Towers.
\newblock An {E}ngquist-{O}sher-type scheme for conservation laws with discontinuous flux adapted to flux connections.
\newblock {\em SIAM J. Numer. Anal.}, 47(3):1684--1712, 2009.

\bibitem{CGP2005}
G.~M. Coclite, M.~Garavello, and B.~Piccoli.
\newblock Traffic flow on a road network.
\newblock {\em SIAM J. Math. Anal.}, 36(6):1862--1886, 2005.

\bibitem{CoGa2010}
G.~M. Cocolite and M.~Garavello.
\newblock {Vanishing viscosity for traffic on networks}.
\newblock {\em SIAM J. Math. Anal.}, 42(4):1761--1783, 2010.

\bibitem{Da1973}
C.~M. Dafermos.
\newblock The entropy rate admissibility criterion for solutions of hyperbolic conservation laws.
\newblock {\em J. Differential Equations}, 14:202--212, 1973.

\bibitem{Da2016}
C.~M. Dafermos.
\newblock {\em Hyperbolic conservation laws in continuum physics}, volume 325 of {\em Grundlehren der Mathematischen Wissenschaften [Fundamental Principles of Mathematical Sciences]}.
\newblock Springer-Verlag, Berlin, fourth edition, 2016.

\bibitem{DGP2018}
M.~L. Delle~Monache, P.~Goatin, and B.~Piccoli.
\newblock Priority-based {R}iemann solver for traffic flow on networks.
\newblock {\em Commun. Math. Sci.}, 16(1):185--211, 2018.

\bibitem{FMR2021}
U.~S. Fjordholm, M.~Musch, and N.~H. Risebro.
\newblock Well-posedness theory for nonlinear scalar conservation laws on networks, 2021.

\bibitem{GHP2016}
M.~Garavello, K.~Han, and B.~Piccoli.
\newblock {\em Models for vehicular traffic on networks}, volume~9 of {\em AIMS Series on Applied Mathematics}.
\newblock American Institute of Mathematical Sciences (AIMS), Springfield, MO, 2016.

\bibitem{GaPi2006}
M.~Garavello and B.~Piccoli.
\newblock {\em Traffic flow on networks}, volume~1 of {\em AIMS Series on Applied Mathematics}.
\newblock American Institute of Mathematical Sciences (AIMS), Springfield, MO, 2006.
\newblock Conservation laws models.

\bibitem{HeMo2009}
M.~Herty and S.~Moutari.
\newblock A macro-kinetic hybrid model for traffic flow on road networks.
\newblock {\em Comput. Methods Appl. Math.}, 9(3):238--252, 2009.

\bibitem{HoRi1995}
H.~Holden and N.~H. Risebro.
\newblock A mathematical model of traffic flow on a network of unidirectional roads.
\newblock {\em SIAM J. Math. Anal.}, 26(4):999--1017, 1995.

\bibitem{Ho2020}
Y.~Holle.
\newblock Kinetic relaxation to entropy based coupling conditions for isentropic flow on networks.
\newblock {\em J. Differential Equations}, 269(2):1192--1225, 2020.

\bibitem{HHW2020}
Y.~Holle, M.~Herty, and M.~Westdickenberg.
\newblock New coupling conditions for isentropic flow on networks.
\newblock {\em Netw. Heterog. Media}, 15(2):605--631, 2020.

\bibitem{IV2004}
C.~Imbert and J.~Vovelle.
\newblock A kinetic formulation for multidimensional scalar conservation laws with boundary conditions and applications.
\newblock {\em SIAM J. Math. Anal.}, 36(1):214--232, 2004.

\bibitem{Kr1970}
S.~N. Kru\v{z}kov.
\newblock First order quasilinear equations with several independent variables.
\newblock {\em Mat. Sb. (N.S.)}, 81 (123):228--255, 1970.

\bibitem{Kw2008}
Y.-S. Kwon.
\newblock Well-posedness for entropy solutions to multidimensional scalar conservation laws with a strong boundary condition.
\newblock {\em J. Math. Anal. Appl.}, 340(1):543--549, 2008.

\bibitem{KwVa2007}
Y.-S. Kwon and A.~Vasseur.
\newblock Strong traces for solutions to scalar conservation laws with general flux.
\newblock {\em Arch. Ration. Mech. Anal.}, 185(3):495--513, 2007.

\bibitem{Le1993}
J.~P. Lebacque.
\newblock Les mod\`{e}les macroscopiques de trafic.
\newblock {\em Annales des ponts et chauss\'{e}es}, 1993.

\bibitem{LiWh1955}
M.~J. Lighthill and G.~B. Whitham.
\newblock On kinematic waves. {II}. {A} theory of traffic flow on long crowded roads.
\newblock {\em Proc. Roy. Soc. London Ser. A}, 229:317--345, 1955.

\bibitem{LPT1994scalar}
P.-L. Lions, B.~Perthame, and E.~Tadmor.
\newblock A kinetic formulation of multidimensional scalar conservation laws and related equations.
\newblock {\em J. Amer. Math. Soc.}, 7(1):169--191, 1994.

\bibitem{Pa1995}
E.~Y. Panov.
\newblock On the strong precompactness of bounded sets of measure-valued solutions of a first-order quasilinear equation.
\newblock {\em Mat. Sb.}, 186(5):103--114, 1995.

\bibitem{Pa2006}
E.~Y. Panov.
\newblock Existence of strong traces for quasi-solutions of multidimensional conservation laws.
\newblock {\em J. Hyperbolic Differ. Equ.}, 4(4):729--770, 2007.

\bibitem{Pe2002}
B.~Perthame.
\newblock {\em Kinetic formulation of conservation laws}, volume~21 of {\em Oxford Lecture Series in Mathematics and its Applications}.
\newblock Oxford University Press, Oxford, 2002.

\bibitem{PeTa1991}
B.~Perthame and E.~Tadmor.
\newblock A kinetic equation with kinetic entropy functions for scalar conservation laws.
\newblock {\em Comm. Math. Phys.}, 136(3):501--517, 1991.

\bibitem{Ri1956}
P.~I. Richards.
\newblock Shock waves on the highway.
\newblock {\em Operations Res.}, 4:42--51, 1956.

\bibitem{Va2012}
A.~F. Vasseur.
\newblock A rigorous derivation of the coupling of a kinetic equation and {B}urgers' equation.
\newblock {\em Arch. Ration. Mech. Anal.}, 206(1):1--30, 2012.

\end{thebibliography}

\end{document}